\begin{document}

\title[Perfectoid spaces and arithmetic jets]{Perfectoid spaces arising from\\
 arithmetic jet spaces}
\bigskip

\def \h{\hat{\ }}
\def \cO{\mathcal O}
\def \ra{\rightarrow}
\def \bZ{{\mathbb Z}}
\def \cP{\mathcal V}
\def \cH{{\mathcal H}}
\def \cB{{\mathcal B}}
\def \d{\delta}
\def \cC{{\mathcal C}}
\def \Z{\mathbb Z}
\def \varphic{f^{\flat}}
\def \h{\widehat{\ }}

\newcommand{\Spec}{\operatorname{Spec}}
\newcommand{\Spa}{\operatorname{Spa}}
\newcommand{\Spf}{\operatorname{Spf}}

\newcommand{\Hom}{\operatorname{Hom}}

\newtheorem{THM}{{\!}}[section]
\newtheorem{THMX}{{\!}}
\renewcommand{\theTHMX}{}
\newtheorem{theorem}{Theorem}[section]
\newtheorem{corollary}[theorem]{Corollary}
\newtheorem{lemma}[theorem]{Lemma}
\newtheorem{proposition}[theorem]{Proposition}
\newtheorem{thm}[theorem]{Theorem}
\theoremstyle{definition}
\newtheorem{definition}[theorem]{Definition}
\theoremstyle{remark}
\newtheorem{remark}[theorem]{Remark}
\newtheorem{example}[theorem]{\bf Example}
\numberwithin{equation}{section}
\address{\ } 

\author{Alexandru Buium}
\address{Department of Mathematics and Statistics,
University of New Mexico, Albuquerque, NM 87131, USA}
\email{buium@math.unm.edu} 

\author{Lance Edward Miller}
\address{Department of Mathematical Sciences,  309 SCEN,
University of Arkansas, 
Fayetteville, AR 72701}
\email{lem016@uark.edu}

\maketitle
\begin{abstract}
Using  arithmetic jet spaces  \cite{char} we attach perfectoid spaces 
\cite{scholze} to smooth schemes and we attach morphisms of perfectoid spaces to  $\d$-morphisms \cite{equations} of smooth schemes. We also study  perfectoid spaces attached  to  arithmetic differential equations \cite{equations}
defined by some of the remarkable  $\d$-morphisms appearing in the theory  such as the $\d$-characters  of elliptic curves  \cite{char} and the $\d$-period maps on modular curves  \cite{equations}.
\end{abstract}

\section{Introduction}

Let $R$ be a 
 complete discrete valuation ring  with maximal ideal generated by an odd prime $p$ and  with an algebraically closed residue field $k=R/pR$. Consider the discrete valued field $K=R[1/p]$ and 
 the metric completion ${\mathbb K}$ of $K[p^{1/p^{\infty}}]$ which is a perfectoid field;  cf. \cite{bhatt, scholze}. Set ${\mathbb K}^{\circ}$ and ${\mathbb K}^{\circ \circ}$ be the valuation ring of ${\mathbb K}$ and the maximal ideal of the valuation ring respectively. The goal of the present paper 
is to use arithmetic jet spaces introduced in \cite{char, pjets} as a tool to construct perfectoid spaces over ${\mathbb K}$ 
, in the sense of \cite{scholze},  attached to various geometric objects over $R$. 

Our first move will be  to construct a functor 
$$
P^{\infty}:{\mathcal S}\ra {\mathcal P}$$
from the category 
 ${\mathcal S}$  of smooth quasi-projective schemes over $R$ to the category
${\mathcal P}$  of perfectoid spaces  over ${\mathbb K}$. 
Our functor $P^{\infty}$  will be 
unique up to an isomorphism of functors.

Explicitly, 
recall from \cite{char, pjets} that for any scheme of finite type $X$ over $R$ one can attach 
its arithmetic jet spaces
$J^n(X)$, $n\geq 0$, which are $p$-adic formal schemes with $J^0(X)=\widehat{X}$. Here and later $\widehat{\ }$ means $p$-adic completion.
Then for any smooth affine scheme $X=\text{Spec}\ B$ over $R$  we let  $J^{\infty}(B)$ be the 
{\it infinite arithmetic jet algebra}, defined as the $p$-adic
completion of the direct limit of the rings $\cO(J^n(X))$; the ring $J^{\infty}(B)$
 carries a canonical {\it Frobenius lift} $\phi$.
Our functor $P^{\infty}$ will attach to any such $X$  the perfectoid 
space $\Spa(\widehat{\mathcal B}[1/p],
\widehat{\mathcal B})$ where 
 ${\mathcal B}$ is the direct limit of  the ring $(J^{\infty}(B) \otimes_R{\mathbb K}^\circ)^{\widehat{\ }}$  along  its induced {\it  relative Frobenius lift} $\Phi$ (cf. the body of the paper for the latter concept); the ${\mathbb K}$-algebra $\widehat{\mathcal B}[1/p]$ turns out to be canonically a perfectoid algebra.  In particular $\text{Spec}\ R$ is sent, by the above functor, into $\Spa({\mathbb K},{\mathbb K}^\circ)$. The case when $X$ is not necessarily affine  is treated by gluing the affine pieces using a ``yoga" of {\it principal covers}.
Cf. Section  2 for a review of these concepts and Section 3 for the construction of the functor $P^{\infty}$.

 As a matter of fact we will show, in Section 3, that the functor $P^{\infty}$ extends to a functor, which we abusively still denote by $P^{\infty}$, that fits into a diagram
 $$\begin{array}{rcl}
 {\mathcal S}_{\d} & \stackrel{P^{\infty}}{\ra} &  {\mathcal P}_{\Phi}\\
 \text{incl} \uparrow & \ & \downarrow \text{forget} \\
 {\mathcal S} & \stackrel{P^{\infty}}{\ra} &  {\mathcal P} 
 \end{array}
 $$
 where the categories above are described as follows. The category ${\mathcal S}_{\d}$ is the category whose objects are the same as those of ${\mathcal S}$ but whose morphisms are the $\d$-{\it morphisms} of schemes. Recall, a $\d$-{\it morphism} of order $n$ between two smooth $R$-schemes of finite type $X$ and $Y$ is, by definition, a morphism of formal schemes $J^n(X)\ra \widehat{Y}$;
 $\d$-morphisms of various orders can be composed and they yield a category. 
 The category ${\mathcal P}_{\Phi}$ is the category whose objects are  perfectoid spaces over ${\mathbb K}$ equipped with what we shall call a {\it principal cover} plus a {\it relative Frobenius lift}, and whose morphisms are compatible with these data.  The functor ``$\text{incl}$" is the identity on objects and the ``inclusion" on morphisms. The functor ``$\text{forget}$" is the forgetful functor.
 
Let ${\mathcal S}_k$ be the category of $k$-schemes. We will prove the following, cf. Corollary \ref{pong}.
 
 \begin{theorem}\label{soare}
 We have a commutative diagram of functors:
 $$
  \begin{array}{rcccl}
 {\mathcal S}_{\d}  & \stackrel{P^{\infty}}{\ra}   & {\mathcal P}_{\Phi}  & \ & \ \\
 \textrm{incl} \uparrow & \ & \ & \stackrel{\textrm{mod}}{\searrow} & \ \\
 {\mathcal S} & \stackrel{\textrm{Green}}{\ra} &  {\mathcal S}_k & \stackrel{\textrm{perf}}{\ra} & {\mathcal S}_k \\
 \end{array}
 $$
 \end{theorem}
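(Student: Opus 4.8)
The strategy is to reduce to the affine case, make $\textrm{mod}\circ P^{\infty}$ explicit there as the perfection of $J^{\infty}(B)/pJ^{\infty}(B)$, match this with $\textrm{perf}\circ\textrm{Green}$, and then obtain the morphism compatibility by naturality. First I would note that all four functors involved are compatible with the affine gluing data: $P^{\infty}$ is built on non-affine $X$ by gluing along principal covers, $\textrm{Green}$ and $\textrm{perf}$ are Zariski-local, and $\textrm{mod}$ is compatible with the principal cover carried by an object of ${\mathcal P}_{\Phi}$. Hence it suffices to prove $\textrm{mod}\circ P^{\infty}\circ\textrm{incl}=\textrm{perf}\circ\textrm{Green}$ on objects $X=\Spec B$, with $B$ a smooth $R$-algebra, and on honest morphisms between such.

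So fix $X=\Spec B$. By construction $P^{\infty}(X)=\Spa(\widehat{{\mathcal B}}[1/p],\widehat{{\mathcal B}})$, while $\textrm{mod}$ sends a perfectoid space $\Spa(S,S^{\circ})$ over $\mathbb{K}$ to the perfect $k$-scheme $\Spec(S^{\circ}/\mathbb{K}^{\circ\circ}S^{\circ})$. Since $p\in\mathbb{K}^{\circ\circ}$, killing $\mathbb{K}^{\circ\circ}$ makes the $p$-adic completion invisible, so $\widehat{{\mathcal B}}/\mathbb{K}^{\circ\circ}\widehat{{\mathcal B}}\cong{\mathcal B}/\mathbb{K}^{\circ\circ}{\mathcal B}$; and as ${\mathcal B}$ is the direct limit of $A:=(J^{\infty}(B)\otimes_{R}\mathbb{K}^{\circ})^{\widehat{\ }}$ along the relative Frobenius lift $\Phi$ and colimits commute with this quotient, the latter equals the direct limit of $A/\mathbb{K}^{\circ\circ}A=J^{\infty}(B)\otimes_{R}k=J^{\infty}(B)/pJ^{\infty}(B)$ along the reduction $\overline{\Phi}$ of $\Phi$.

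The crux, and the step I expect to be the main obstacle, is twofold. First one needs the identification of $k$-algebras $J^{\infty}(B)/pJ^{\infty}(B)\cong\cO(\textrm{Green}(\Spec B))$ furnished by the construction of $\textrm{Green}$, i.e.\ the passage from the $\d$-theoretic infinite jet algebra to its mod $p$ (Greenberg-type) incarnation. Second --- and this is exactly where the adjective \emph{relative} in ``relative Frobenius lift'' is used --- one needs that $\overline{\Phi}$ coincides, up to an automorphism, with the $p$-power (absolute) Frobenius of $J^{\infty}(B)/pJ^{\infty}(B)$; this is precisely the property that forces $\widehat{{\mathcal B}}[1/p]$ to be perfectoid, since forming the colimit along $\Phi$ is what makes Frobenius bijective modulo the pseudo-uniformizer. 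Granting both, the direct limit computed above is canonically the perfection of $\cO(\textrm{Green}(\Spec B))$, whence $\textrm{mod}(P^{\infty}(X))\cong\Spec((\cO(\textrm{Green}(\Spec B)))^{\textrm{perf}})=\textrm{perf}(\textrm{Green}(X))$. The only point requiring genuine care is the order in which $p$-adic completion, base change to $\mathbb{K}^{\circ}$, the $\Phi$-colimit and reduction modulo $\mathbb{K}^{\circ\circ}$ are performed; here one uses that the rings in play are $p$-torsion free and that $\mathbb{K}^{\circ\circ}$ contains $p$, so these operations may be interchanged --- facts already established in the construction of $P^{\infty}$ in Section 3.

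Finally, for an honest morphism $f\colon X'\to X$ in ${\mathcal S}$, each identification above --- functoriality of $J^{\infty}$, of reduction mod $p$, of the $\Phi$-colimit, and the canonical identification of a colimit along Frobenius with the perfection --- is natural in $f$, so $\textrm{mod}(P^{\infty}(f))=\textrm{perf}(\textrm{Green}(f))$. It is worth recording why the statement is asserted only along $\textrm{incl}\colon{\mathcal S}\hookrightarrow{\mathcal S}_{\d}$: a $\d$-morphism of order $n\geq 1$ induces on $J^{\infty}(B)/pJ^{\infty}(B)$ a map whose effect on the $i$-th jet coordinate involves a $p^{i}$-th power of the order-$n$ datum, so after perfection it is still a morphism of perfect $k$-schemes but not, in general, the perfection of a morphism coming from ${\mathcal S}$; for order-zero --- that is, honest --- morphisms this twist vanishes, which is precisely the triangle in the statement.
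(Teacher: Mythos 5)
Your proposal is correct and follows essentially the same route as the paper: reduce to the affine case via the local/gluing compatibility of all four functors, identify $\widehat{\mathcal B}/\mathbb{K}^{\circ\circ}\widehat{\mathcal B}$ with the colimit of $J^{\infty}(B)/pJ^{\infty}(B)$ along the reduction of $\Phi$ (which is the absolute Frobenius, so the colimit is the perfection), invoke the identification of $J^{\infty}(B)/pJ^{\infty}(B)$ with the Greenberg transform from \cite{pjets}, and conclude by naturality. This is exactly the content of Lemma \ref{l7}(4), Theorems \ref{r1}(4), \ref{r2} and \ref{ping}(2), assembled in Corollary \ref{pong}(2).
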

 
 The functor
 ``$\text{mod}$",
 ${\mathbb X}\ra \bmod({\mathbb X})=:{\mathbb X}_k$ will be referred to as the {\it reduction modulo ${\mathbb K}^{\circ \circ}$ functor}. The functor ``$\text{Green}$"  is the {\it Greenberg transform functor} \cite{pjets, greenberg}. The functor  ``$\text{perf}$" is the {\it perfection functor} $V\mapsto V_{\text{perf}}$.

 Some remarkable $\d$-morphisms were constructed in  \cite{char, difmod, equations}. Among them are the {\it $\d$-characters} (differential characters)
 \begin{equation}
 \psi:J^2(E)\ra \widehat{{\mathbb G}_a}\end{equation}
  of Abelian schemes $E$ \cite{char}, and the  {\it $\d$-period maps}  
  \begin{equation}
  \wp:J^2(X)\ra \widehat{{\mathbb A}^2},\end{equation}
   on certain  open sets $X$ of modular curves, where 
 the components of $\wp$ are defined by {\it isogeny covariant differential modular forms} \cite{difmod, equations}. 
 For simplicity we will only be interested here in the case $E$ has dimension $1$ (an elliptic curve) and the modular curve has level $1$ (the $j$-line). 
 The maps $\psi$ and $\wp$ will be reviewed in the body of the paper.
 With these maps at our disposal, we have induced morphisms of perfectoid spaces,
\begin{equation}\label{Ppsi}
P^{\infty}(\psi):P^{\infty}(E)\ra P^{\infty}({\mathbb G}_a),\end{equation}
\begin{equation}\label{Pwp}
P^{\infty}(\wp):P^{\infty}(X)\ra P^{\infty}({\mathbb A}^2),\end{equation}
which we may still refer to as the perfectoid {\it $\d$-characters} and the perfectoid {\it $\d$-period maps}
 respectively. One can also consider the {\it projectivized $\d$-period map},
 \begin{equation}
 \tilde{\wp}:J^2(X)\backslash \wp^{-1}(0)\stackrel{\wp}{\ra} \widehat{{\mathbb A}^2}\backslash \{0\}\stackrel{\text{can}}{\ra} \widehat{{\mathbb P}^1}.
 \end{equation}
 The map $\tilde{\wp}$ might  be more natural than $\wp$ in the sense that its closer analogy with classical period maps but we prefer to work with $\wp$ instead  because $\tilde{\wp}$ is not a $\d$-morphism; it has singularities along $\wp^{-1}(0)$ so, in our formalism, we cannot attach to it a map of perfectoid spaces. One can do that if one appropriately generalizes our formalism but such a generalization would take us too far afield.
 
 \

 There is an analogue 
 $$\psi:J^1({\mathbb G}_m)\ra \widehat{{\mathbb G}_a}$$
 of the $\psi$ above called the canonical {\it  $\d$-character of ${\mathbb G}_m$} 
 which unlike the case of abelian schemes is entirely explicit. Thus, there is an induced map
  \begin{equation}\label{Popsi}
P^{\infty}(\psi):P^{\infty}({\mathbb G}_m)\ra P^{\infty}({\mathbb G}_a),\end{equation}
which we may still refer to as the canonical perfectoid {\it  $\d$-character}.

Section 4  begins with  a concrete description of the endomorphism $P^{\infty}([p])$ of $P^{\infty}({\mathbb G}_m)$ induced by the $p$-power isogeny $[p]$ of ${\mathbb G}_m$; in particular we will prove the following, cf. Theorem \ref{poi}.

\begin{theorem}
The morphism $P^{\infty}([p]):P^{\infty}({\mathbb G}_m)\ra P^{\infty}({\mathbb G}_m)$ is a closed immersion but not an isomorphism.
\end{theorem}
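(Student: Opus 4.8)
\emph{The plan.} Everything rests on the explicit formula for $P^\infty([p])$ coming from the concrete description of $P^\infty(\mathbb G_m)=\Spa(\widehat{\mathcal B}[1/p],\widehat{\mathcal B})$ given above. Since $[p]^*$ is the $\d$-ring endomorphism of $\mathcal O(\mathbb G_m)=R[x,x^{-1}]$ sending $x\mapsto x^p$, the induced ring endomorphism $F$ of $\widehat{\mathcal B}[1/p]$ fixes the coefficient field $\mathbb K$, sends $x\mapsto x^p$, and on the jet coordinates is governed by $[p]^*(\d x)=\d(x^p)=\sum_{k=1}^{p}\binom pk p^{k-1}x^{p(p-k)}(\d x)^k$ and its iterates. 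I would establish the statement in two steps: first that $F$ is surjective — which, by the theory of \cite{scholze}, is precisely the assertion that $\Spa(F)$ realizes $P^\infty(\mathbb G_m)$ as a closed subspace of itself — and second that $F$ is not bijective.

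\emph{Closed immersion.} To see that $F\colon\widehat{\mathcal B}[1/p]\ra\widehat{\mathcal B}[1/p]$ is surjective I would treat the ``$\mathbb G_m$-direction'' and the ``jet directions'' separately. In $\widehat{\mathcal B}[1/p]$ the element $x$ carries a compatible system of $p$-power roots $x^{1/p^n}$, produced by the iterates of $\Phi^{-1}$ in the colimit defining $\mathcal B$, and on these $F$ acts as the shift $x^{1/p^n}\mapsto x^{1/p^{n-1}}$, so the closed subalgebra they generate lies in the image. For a jet coordinate $z$ the displayed formula gives $F(z)=p\cdot(\text{unit of }\widehat{\mathcal B})\cdot z + (\text{corrections of lower jet order})$; since $p$ is odd the binomial series $(1+p(\,\cdot\,))^{1/p^n}$ converges in $\widehat{\mathcal B}$, so these units — and the elements $p^{1/p^n}\in\mathbb K^\circ$ — admit $p$-power roots there, and because $p$ is invertible in $\widehat{\mathcal B}[1/p]$ one can solve $F(s)=z^{1/p^n}$ inductively on the jet order. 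One concludes that the image of $F$ contains a dense subring; strictness of maps of perfectoid algebras then forces $F$ to be surjective, and \cite{scholze} yields that the image is a perfectoid subspace and $P^\infty([p])$ a closed immersion onto it.

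\emph{Not an isomorphism.} Here Theorem \ref{soare} does the work: $\mathrm{mod}(P^\infty([p]))=\mathrm{perf}(\mathrm{Green}([p]))$, and $\mathrm{Green}([p])$ is the morphism of $k$-schemes induced by $J^\infty([p])^*\bmod p$, which sends $\bar x\mapsto\bar x^p$ and $\bar{\d x}\mapsto\overline{\d(x^p)}=0$, because every term of $\d(x^p)=\sum_k\binom pk p^{k-1}x^{p(p-k)}(\d x)^k$ is divisible by $p$ (the coefficient $\binom pk$ is for $1\le k\le p-1$, the factor $p^{k-1}$ is for $k\ge 2$, and the $k=p$ term $p^{p-1}(\d x)^p$ is since $p\ge 2$). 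Hence $\mathrm{Green}([p])^*$, and therefore $\mathrm{mod}(P^\infty([p]))^*$, kills $\bar{\d x}$ and so is not injective; thus $\mathrm{mod}(P^\infty([p]))$ is not an isomorphism, and since functors carry isomorphisms to isomorphisms, neither is $P^\infty([p])$.

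\emph{The main obstacle.} The step that does not come for free is the surjectivity of $F$: the $\mathbb G_m$-direction is under control (a shift on $p$-power roots), but one must control all of the infinitely many jet directions simultaneously, the subtlety being that ``multiplication by $p$ up to units'' becomes invertible only after inverting $p$, so the preimages must be produced inside $\widehat{\mathcal B}[1/p]$ using the $p$-power roots available in a perfectoid algebra — which is exactly where oddness of $p$ enters — and one must then argue that the image is closed. Keeping simultaneous track of the integral subring $\widehat{\mathcal B}$, so as to obtain a genuine closed immersion of adic spaces rather than merely a surjection of Tate rings, is the remaining point requiring care; by contrast, the non-isomorphism assertion is a one-line computation once Theorem \ref{soare} is available.
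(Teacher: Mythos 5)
Your ``not an isomorphism'' half is correct and is essentially the paper's own argument: after reduction modulo ${\mathbb K}^{\circ\circ}$ (equivalently, after applying $\mathrm{perf}\circ\mathrm{Green}$), the class of $x'$ is a nonzero element that is killed because $\d(x^p)\equiv 0\bmod p$, so the reduction is not injective and $P^{\infty}([p])$ cannot be an isomorphism.

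The closed-immersion half has a genuine gap. In this paper a closed immersion is \emph{defined} by surjectivity of the map of integral perfectoid rings $B^\circ\ra C^\circ$, so what must be proved is surjectivity of $\widehat{\mathcal B}\ra\widehat{\mathcal B}$, not of $\widehat{\mathcal B}[1/p]$. Your strategy of solving $F(s)=z^{1/p^n}$ by inverting $p$ is therefore aimed at the wrong ring, and you acknowledge but do not close this gap. Moreover, even as an argument for the Tate rings it does not go through: the induction runs over infinitely many jet coordinates and each step divides by $p$, so the putative preimages have unbounded denominators and the ``dense subring'' you produce need not be dense for the topology in which $\widehat{\mathcal B}$ is open and bounded. (A smaller inaccuracy: $[x,i+1]$ is not an exact $p$-th root of $[x,i]$, since $\Phi([x,i+1])=[x^p+px',i+1]$; it is a root only modulo $p$, which is all that is actually used.) The mechanism that makes the theorem work is integral and runs modulo $p$: by Lemma \ref{l1}(2) it suffices to prove surjectivity of $\widehat{\mathcal B}/p\ra\widehat{\mathcal B}/p$, and the key input, quoted from \cite[Thm. 1.1]{pfin1}, is that for $n\geq 2$ one has $\d^n(x^p)=x^{p^n(p-1)}(x^{(n-1)})^p+v_{n-2}+pw_n$, i.e.\ $\d^n(x^p)$ contains $(x^{(n-1)})^p$ with a \emph{unit} coefficient mod $p$ --- not ``$p$ times a unit times $x^{(n-1)}$'' as your heuristic $F(z)=p\cdot(\text{unit})\cdot z+\cdots$ suggests; that shape is correct only for $n=1$. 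Combining this identity with the colimit relation $[x^{(n)},i]=[\phi(x^{(n)}),i+1]\equiv[(x^{(n)})^p,i+1]\bmod p$ and an induction on $n$ that absorbs the lower-order term $v_{n-1}$ yields surjectivity mod $p$ on $\widehat{\mathcal B}$ itself, and $p$-adic completeness finishes. Without this identity (or a substitute for it) the surjectivity claim is unproved.
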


We also construct of a canonical  section  for the canonical
 $\d$-character  $P^{\infty}(\psi)$ in \ref{Popsi}. More precisely, via this we introduce an entirely new concept in $\d$-geometry, namely {\it  cocharacters} of ${\mathbb G}_m$ which are certain  morphisms
\begin{equation}
\label{eva}
\sigma:P^{\infty}({\mathbb G}_a)\ra P^{\infty}({\mathbb G}_m)\end{equation} 
that play the role of $1$-parameter subgroups. We will prove the following relationship for the canonical character. 

\begin{theorem}
There is a unique cocharacter $\sigma$ with the property that $\sigma$ is a right inverse of the canonical perfectoid $\d$-character $P^{\infty}(\psi)$. 
\end{theorem}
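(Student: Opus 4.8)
The plan is to reduce the statement to a single linear equation inside the perfectoid algebra attached to $\mathbb{G}_a$, which is solvable precisely because the perfectoid construction makes the relative Frobenius lift invertible. Write $\mathcal{O}_a=\mathcal{O}(P^\infty(\mathbb{G}_a))$ and $\mathcal{O}_m=\mathcal{O}(P^\infty(\mathbb{G}_m))$, let $y,x$ be the images of the coordinates of $\mathbb{G}_a,\mathbb{G}_m$, and let $\Phi$ denote the relative Frobenius lift on each. First I would identify the datum of a cocharacter: since $P^\infty(\mathbb{G}_m)$ is affinoid and $\mathcal{O}_m[1/p]$ is topologically generated over $\mathbb{K}$ by the $\Phi$-translates of the unit $x$, a morphism $\sigma: P^\infty(\mathbb{G}_a)\to P^\infty(\mathbb{G}_m)$ compatible with the relative Frobenius lifts (as a cocharacter is) amounts to the element $g:=\sigma^\flat(x)\in\mathcal{O}_a^\times$ (subject to the integrality making $\sigma^\flat$ well defined), and compatibility with the group laws — comultiplication $x\mapsto x\otimes x$ on $\mathbb{G}_m$ against the additive one on $\mathbb{G}_a$ — forces $g$ to be a grouplike element of the Hopf algebra $\mathcal{O}_a$, equivalently $h:=\log g$ to be a primitive element. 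By the theory of $\delta$-characters of $\mathbb{G}_a$ (whose $R\{\phi\}$-module of characters is free on the coordinate) together with the colimit-along-$\Phi$ construction of $\widehat{\mathcal B}$, the primitive elements of $\mathcal{O}_a$ form the completion of $\bigoplus_{n\in\mathbb{Z}}\mathbb{K}\cdot\Phi^n(y)$, the $\Phi^n(y)$ being topologically $\mathbb{K}$-independent.

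Next I would unwind the right-inverse condition. The canonical $\delta$-character is $\psi(x)=\tfrac1p\log(\phi(x)x^{-p})$, and since $\Phi$ restricts to $\phi$ on the jet coordinates we get $P^\infty(\psi)^\flat(y)=\tfrac1p\log(\Phi(x)x^{-p})$, hence $P^\infty(\psi)^\flat(\Phi^n(y))=\Phi^n\big(\tfrac1p\log(\Phi(x)x^{-p})\big)$. Because $\Phi(x)x^{-p}=1+p\,(\delta x)x^{-p}$ is a $1$-unit, all logarithms and exponentials that follow converge ($p$ being odd). Applying the continuous, $\Phi$-equivariant ring map $\sigma^\flat$, the identity $P^\infty(\psi)\circ\sigma=\mathrm{id}$ becomes $\tfrac1p\log(\Phi(g)g^{-p})=y$, i.e. $\Phi(g)g^{-p}=\exp(py)$; taking logarithms of these grouplikes (which are primitive) and using $\log\Phi=\Phi\log$, this is the linear equation $(\Phi-p)h=py$ in the module of primitive elements.

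For existence, the key observation is that the colimit-along-$\Phi$ definition makes $\Phi$ a ring automorphism of $\mathcal{O}_a$, restricting to an automorphism of $\widehat{\mathcal B}$; so I can take $h:=p(\Phi-p)^{-1}y=\sum_{m\ge1}p^{m}\Phi^{-m}(y)$, a series which converges because each term lies in $p^{m}\widehat{\mathcal B}$. This $h$ is primitive (as $\Phi^{\pm1}$ preserve the module of primitives) and lies in $p\widehat{\mathcal B}$, so $g:=\exp(h)\in 1+p\widehat{\mathcal B}$ is a grouplike unit, defining a cocharacter $\sigma$; to see $\sigma^\flat\circ P^\infty(\psi)^\flat=\mathrm{id}$ it suffices to check it on the topological generators $\Phi^n(y)$, which reduces to $\tfrac1p\log(\Phi(g)g^{-p})=\tfrac1p(\Phi(h)-ph)=y$. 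For uniqueness, two solutions $g_1,g_2$ give a grouplike unit $w:=g_1/g_2$ with $\Phi(w)=w^{p}$; writing $\log w=\sum_n c_n\Phi^n(y)$, the relation $\Phi(\log w)=p\log w$ forces the recursion $c_{n-1}=pc_n$, whose only solution with $c_n\to0$ as $n\to+\infty$ — needed for $\log w$ to lie in the completed module of primitives — is $c_n\equiv0$; hence $w=1$ and $\sigma_1=\sigma_2$.

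The step I expect to be the main obstacle is the structural input hidden behind the word ``primitive'': establishing that the primitive (equivalently grouplike) elements of $\mathcal{O}_a$ are exactly the completed $\mathbb{K}$-span of $\{\Phi^n(y)\}_{n\in\mathbb{Z}}$ with honest topological linear independence, and that $\log$/$\exp$ give a genuine bijection between the relevant grouplike units and primitive elements in this perfectoid setting. This is where the $\delta$-geometry of $\mathbb{G}_a$, the Hopf structure transported by $P^\infty$, the colimit construction, and $p$-adic estimates for $\Phi^{-1}$ in $\widehat{\mathcal B}$ all have to be brought together; once $\Phi$ is known to be an automorphism, inverting $\Phi-p$ and checking the recursion are short formal computations.
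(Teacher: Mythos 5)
Your computational core coincides with the paper's. The right--inverse condition is reduced, exactly as in the paper, to the linear equation $(\Phi-p)h=pz$ for $h=\log\sigma^{\flat}(x)$, whose unique solution subject to the decay condition is $h=\sum_{m\geq 1}p^{m}\Phi^{-m}(z)$ (the paper's $a_n=p^n$ for $n\geq 1$, $a_n=0$ for $n\leq 0$); the recursion $\phi(a_n)=pa_{n-1}$ forcing the vanishing of all coefficients in the non-decaying direction is precisely the paper's Lemma~\ref{polar59}, and your observation that it suffices to verify the identity on $z$ (equivalently on the $\Phi$-translates of $z$) is the paper's appeal to the universality statement, Theorem~\ref{uniuni}.

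There is, however, a mismatch between your notion of cocharacter and the paper's, and it leaves a genuine gap in your uniqueness argument as written. In the paper a cocharacter of ${\mathbb G}_m$ is \emph{by definition} the morphism induced by $x\mapsto \Sigma=\exp\left(\sum_{n} a_n\phi^{-n}(z)\right)$ for some sequence $(a_n)_{n\in\bZ}$ with $a_n\in pR$ and $a_n\to 0$ as $n\to\pm\infty$; uniqueness therefore only needs to be established within this explicitly parametrized family, where $\log\sigma^{\flat}(x)$ has the required shape by construction. You instead take a cocharacter to be an arbitrary $\Phi$-equivariant group homomorphism $P^{\infty}({\mathbb G}_a)\ra P^{\infty}({\mathbb G}_m)$, and your uniqueness then rests on the structural claim that every grouplike unit $w$ has $\log w$ lying in the completed ${\mathbb K}$-span of $\{\Phi^{n}(y)\}_{n\in\bZ}$, with these elements topologically independent. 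You flag this yourself as the main obstacle and do not prove it; it amounts to a classification of the primitive elements of the perfectoid Hopf algebra of ${\mathbb G}_a$ (a perfectoid analogue of the computation of $\d$-characters of ${\mathbb G}_a$), and nothing of the sort appears in, or is needed by, the paper. With the paper's definition your argument collapses to the paper's proof and is complete; with your broader definition you are asserting a stronger theorem (closer to the expectation recorded in Remark~\ref{rmknosec}, which the paper deliberately leaves open) and you must supply that structure theorem, together with the convergence constraint that the coefficients lie in $pR$ so that the exponential makes sense --- a condition the paper builds into the definition rather than deduces.
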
 

Section 5 begins with a review of $\d$-characters of elliptic curves (cf. \cite{char, equations}). Here again, we will discuss  a concept of {\it  cocharacters} which are certain morphisms
\begin{equation}
\label{adam}
\sigma:P^{\infty}({\mathbb G}_a)\ra P^{\infty}(E)\end{equation}
that play, as before, the role of $1$-parameter subgroups. To state our result 
  below recall that an elliptic curve over $R$ is called 
 {\it ordinary} respectively {\it supersingular} if its reduction modulo $p$ is so. An elliptic curve over $R$ will be  called
 {\it superordinary} if it has ordinary reduction and its Serre-Tate parameter  is not congruent to $1 \bmod p^2$. Recall that an ordinary elliptic curve has Serre-Tate parameter equal to $1$ if and only if the elliptic curve has a Frobenius lift, equivalently if it is a canonical lift. 
We will prove the following result; cf. Theorem \ref{sandoval}.

\begin{theorem}\label{ruru}
Assume $E$ has no Frobenius lift and let $P^{\infty}(\psi)$ be the perfectoid $\d$-character described in \ref{Ppsi}. 

1) If $E$ is supersingular there exists a unique  cocharacter $\sigma:P^{\infty}({\mathbb G}_a)\ra P^{\infty}(E)$
with the property that $\sigma$ is  a right inverse to $P^{\infty}(\psi)$.

2) If $E$ is superordinary or ordinary and defined over $\bZ_p$ then there exists no  cocharacter $\sigma:P^{\infty}({\mathbb G}_a)\ra P^{\infty}(E)$ with the property that $\sigma$ is  a right inverse to $P^{\infty}(\psi)$.
\end{theorem}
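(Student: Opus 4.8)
The plan is to reduce the existence of a right-inverse cocharacter to an explicit computation on the level of $\d$-characters of elliptic curves, exploiting the known structure of $\psi=\psi_E$. Recall that $\psi:J^2(E)\ra \widehat{{\mathbb G}_a}$ is defined (up to a constant) by an equation of the form $\delta^2 u + \alpha_1\,\delta u + \alpha_0\, u$ in suitable formal coordinates on the formal group of $E$, where the coefficients $\alpha_i\in R$, and where the characteristic polynomial $T^2+\alpha_1 T+\alpha_0$ attached to $\psi$ encodes the arithmetic of $E$: for $E$ supersingular the two roots are $p$-adic units times conjugate Weil-type numbers, whereas in the ordinary case one root is a unit and the other is divisible by $p$, and the Serre--Tate parameter controls the precise $p$-adic valuations. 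The first step is therefore to translate the statement ``$\sigma$ is a right inverse of $P^{\infty}(\psi)$'' into a system of equations in the infinite jet algebras $J^{\infty}$ of ${\mathbb G}_a$ and $E$, base-changed to ${\mathbb K}^\circ$ and passed to the limit along the relative Frobenius lift $\Phi$; concretely, a cocharacter $\sigma$ corresponds to a compatible family of power series, and the right-inverse condition becomes $\psi\circ\sigma=\mathrm{id}$ on the perfectoid algebras.

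Next I would handle part (1), the supersingular case. Here the point is that supersingularity forces the operator attached to $\psi$ to be, after inverting $p$ and passing to the perfectoid completion, \emph{invertible}: the two roots of the characteristic polynomial are units in ${\mathbb K}^\circ$ (indeed in $R$), so the ``$\delta$-linear recursion'' defining $\psi$ can be solved uniquely for the input given the output, once one is allowed to take $p$-power roots freely — which is exactly what the perfectoid construction $\mathcal B=\varinjlim_\Phi(\dots)$ buys us. So I would explicitly invert the recursion to produce $\sigma$, check that the resulting power series land in $\widehat{\mathcal B}$ (uses unit coefficients, hence no denominators beyond those already present), verify compatibility with $\Phi$ so that $\sigma$ is a morphism in ${\mathcal P}_\Phi$ and in particular a bona fide cocharacter, and finally observe uniqueness: any two right inverses differ by a section of $\ker P^{\infty}(\psi)$, and the supersingular hypothesis (no Frobenius lift is automatic here) makes that kernel trivial as a cocharacter target.

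For part (2), the obstruction is a $p$-adic valuation (divisibility) phenomenon. In the superordinary case one root of the characteristic polynomial has positive valuation: solving the recursion for $\sigma$ would force division by a non-unit at each stage, and — crucially — the perfectoid construction only adjoins $p$-power \emph{roots}, not $p$-power \emph{denominators}, so the putative $\sigma$ would have coefficients with unbounded negative valuation and fail to lie in $\widehat{\mathcal B}[1/p]$, let alone in $\widehat{\mathcal B}$. I would make this precise by exhibiting the first coefficient where the obstruction appears (governed by the Serre--Tate parameter not being $\equiv 1 \bmod p^2$, which is precisely what guarantees the relevant valuation is exactly $1$ rather than $0$), and then showing the obstruction propagates: no amount of ${\mathbb K}^\circ$-adjustment or Frobenius-twisting can cancel it. The case ``$E$ ordinary defined over $\bZ_p$'' is treated similarly but one uses the rationality over $\bZ_p$ to pin down the coefficients and rule out the degenerate canonical-lift situation by the standing hypothesis that $E$ has no Frobenius lift.

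The main obstacle I anticipate is the bookkeeping in the passage to the limit along $\Phi$: one must be careful that ``solving the recursion'' is actually permitted in $\widehat{\mathcal B}$ and does not secretly require inverting $\Phi$ or adjoining elements outside the perfectoid tower, and, for the negative direction, that the valuation obstruction is genuinely an obstruction in $\widehat{\mathcal B}[1/p]$ and is not annihilated after the perfectoid completion (a priori taking all $p$-power roots could conceivably shrink valuations — it does not, but this needs the structure theory of perfectoid algebras, i.e. that $\widehat{\mathcal B}[1/p]$ is perfectoid with ring of integral elements $\widehat{\mathcal B}$, so that valuations of honest elements are unchanged). Establishing this clean dichotomy — unit coefficient $\Rightarrow$ unique $\sigma$, non-unit coefficient $\Rightarrow$ no $\sigma$ — is the technical heart, and it rests on the explicit form of $\psi_E$ reviewed at the start of Section 5 together with the basic properties of $P^{\infty}$ from Section 3.
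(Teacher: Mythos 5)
Your overall strategy --- translate the right-inverse condition into a $\phi$-linear recursion governed by the ``characteristic polynomial'' $T^2+\lambda_1T+p\lambda_0$ of $\psi$ and decide solvability by a $p$-adic valuation analysis --- is the same as the paper's (Lemma \ref{pppp} and the proof of Theorem \ref{sandoval}). However, your key arithmetic claim is inverted, and following it would prove the opposite of the theorem. In the supersingular case the two roots of $T^2+\lambda_1T+p\lambda_0$ are \emph{not} units: by Lemma \ref{contrast} supersingularity gives $\lambda_1\equiv 0\bmod p$ while $\lambda_0$ is a unit, so the Newton polygon forces both roots to have valuation $1/2$ (e.g.\ over $\bZ_p$ with $a_p=0$ the polynomial is $T^2+p$). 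It is precisely this \emph{positive} valuation of the roots that makes the formal inversion $(\phi-\alpha)^{-1}=\sum_{k\geq 0}\alpha^k\phi^{-k-1}$ converge $p$-adically, producing the unique sequence $(a_n)$ with $a_n\ra 0$ (the paper gets $a_{2k-1},a_{2k}\in p^kR$) and hence the cocharacter $\sigma$. Conversely, in the superordinary case and the ordinary-over-$\bZ_p$ case, $\lambda_1$ is a \emph{unit}, so the polynomial has a unit root; the recursion $\phi^2(a_{n+2})+\lambda_1\phi(a_{n+1})+p\lambda_0a_n=0$ with $a_2=p$ then keeps $v_p(a_n)=1$ for all $n$, the series $\sum a_n\phi^{-n}(z)$ fails to converge in $\widehat{C}$, and no cocharacter exists. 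Your dichotomy ``unit roots $\Rightarrow$ unique $\sigma$, non-unit root $\Rightarrow$ no $\sigma$'' is exactly backwards, and your proposed mechanism for part (2) (``division by a non-unit at each stage'' producing unbounded denominators) is not the actual obstruction, which is a failure of $p$-adic convergence of a sequence with \emph{bounded} (not decreasing to $-\infty$) valuations --- note that your mechanism would wrongly rule out the supersingular case as well, where every coefficient is divisible by $p$.

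Two further points. First, your uniqueness argument in part (1) via ``the kernel is trivial as a cocharacter target'' is not available: the kernel of $P^{\infty}(\psi)$ is the perfectoid arithmetic Manin kernel of Section 7, which is a large (pro-\'{e}tale-cover-of-$\overline{E}$-sized) object; uniqueness instead comes from the uniqueness of the convergent solution $(a_n)$ of the recursion (the ``Claim'' in Lemma \ref{pppp} forces $a_n=0$ for $n\leq 1$, after which the recursion determines everything). Second, be aware that the paper's proof of the non-existence in part (2) does not close purely formally: it invokes Corollary \ref{ppp} (non-degenerate quasi-linearity of the $\d$-isogeny class and the resulting perfectoid structure) to derive the final contradiction, so some input beyond the bare recursion is used to justify that the identity of perfectoid morphisms really forces the equation in $\widehat{C}$.
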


Our next aim, in Section 6-8, will be to analyze  perfectoid spaces attached to some remarkable {\it arithmetic differential equations}, in the sense of \cite{equations}. 
Here an arithmetic differential equation of order $n$ on a scheme $X$ is a closed formal subscheme 
of  $J^n(X)$. The examples we are interested in here are 
 the {\it arithmetic analogues of Manin kernels}, which are the kernels of the $\d$-characters
 $\psi$ above  and  the {\it $\d$-isogeny classes}. These are defined as 
 preimages via the $\d$-period map $\wp$ above of lines in ${\mathbb A}^2$ passing through the origin. These arithmetic differential equations played a key role in \cite{local}
where an application to Heegner points was considered and in \cite{BYM} where an arithmetic analogue of Painlev\'{e} VI equations was introduced and studied. We refer to Sections 6-8 for a review of these concepts and for statements/proofs of our results. The key step in our construction will be to show that the said arithmetic differential equations enjoy a remarkable property which we call {\it quasi-linearity}. We will undertake a detailed analysis of quasi-linear arithmetic differential equations; as a corollary of our analysis we will show that certain algebras attached to them are perfectoid. With the definitions reviewed in the body of the paper we will prove the following Theorems \ref{hoi} and \ref{boi} below; cf. Corollaries \ref{lll} and \ref{ppp},
respectively.

\begin{theorem}\label{hoi} 
Let $E$ be a superordinary elliptic curve over $R$.
There exists 
a perfectoid space $P^{\sharp}(E)$ in ${\mathcal P_{\Phi}}$  plus a closed immersion $P^{\sharp}(E)\ra P^{\infty}(E)$ in ${\mathcal P_{\Phi}}$ such that the following properties hold:

\begin{enumerate}

\item $P^{\sharp}(E)_{\text{red}}$ is the perfection of a profinite pro-\'{e}tale cover of the reduction modulo $p$ of $E$. 
 
\item For any torsion point $T:\Spec R \ra E$, the induced morphism on perfectoid spaces $P^{\infty}(T):P^{\infty}(\Spec R)\ra P^{\infty}(E)$ factors through $P^{\sharp}(E)\ra P^{\infty}(E)$.

\item The  attachment $E\mapsto P^{\sharp}(E)$ is functorial in $E$, with respect to isogenies of degree prime to $p$, and also with respect to translations by torsion points.

\end{enumerate} 
  
\end{theorem}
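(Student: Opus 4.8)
The plan is to construct $P^{\sharp}(E)$ as the perfectoid space attached — via a suitable extension of the machinery of Section 3 — to the arithmetic differential equation given by the arithmetic analogue of the Manin kernel, i.e.\ the kernel of the $\d$-character $\psi:J^2(E)\ra \widehat{{\mathbb G}_a}$. Concretely, I would first recall the closed formal subscheme $N=\ker(\psi)\subset J^2(E)$ and its infinite-order avatar $N^\infty\subset J^\infty(E)$ obtained by pulling back the Frobenius tower; taking the direct limit of the coordinate rings along the relative Frobenius lift $\Phi$, tensoring with ${\mathbb K}^\circ$, completing, and passing to $\Spa$ of (the inverse of $p$ adjoined to) the resulting ring produces a candidate space together with a closed immersion into $P^\infty(E)$. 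The first real task is to verify that the algebra so produced is actually perfectoid; this is where the \emph{quasi-linearity} of the Manin-kernel equation (promised for Sections 6--8) enters, since quasi-linearity is precisely the structural input that lets one run the ``$\Phi$ becomes an isomorphism after completion'' argument from Section 3 on this non-smooth subscheme. I would isolate the quasi-linearity statement as a lemma and then invoke the perfectoid criterion exactly as in the construction of $P^\infty$ for smooth schemes.

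Next I would prove part (1). Reducing the constructed algebra modulo ${\mathbb K}^{\circ\circ}$ and using the commutativity of the diagram in Theorem \ref{soare}, the reduction $P^{\sharp}(E)_{\text{red}}$ is computed as the perfection of the special fiber of the Greenberg transform of $N^\infty$. Here I would use the classical description (from \cite{char, difmod}) of the Manin kernel modulo $p$: for $E$ superordinary the reduction of $\ker(\psi)$ is, up to the perfection, a pro-\'etale cover of $E_k$ built from the $p$-power Frobenius/Verschiebung tower — essentially the Igusa-type tower. So the content of (1) is matching my construction's special fiber with this known object; the perfection functor in Theorem \ref{soare} does half the work, and the remaining identification is a diagram chase together with the explicit shape of $\psi\bmod p$.

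For part (2), the point is that every torsion point $T\colon \Spec R\to E$ lands, after applying $J^2$, inside $\ker(\psi)$: this is the arithmetic analogue of the classical fact that $\d$-characters (Manin maps) kill torsion, and it is recorded in \cite{char} (torsion points satisfy the Manin-kernel equation because $\psi$ is a group homomorphism into $\widehat{{\mathbb G}_a}$, which is torsion-free). Functoriality of the whole construction then shows $P^\infty(T)$ factors through $P^{\sharp}(E)\to P^\infty(E)$. Part (3) is formal: an isogeny of degree prime to $p$ induces an isomorphism on $J^2$ compatible with $\psi$ up to a unit, hence carries $\ker(\psi)$ to $\ker(\psi)$ and thus induces a morphism $P^{\sharp}(E)\to P^{\sharp}(E')$; translation by a torsion point $T$ carries $\ker(\psi)$ to itself by part (2) and linearity of $\psi$, giving the asserted functoriality. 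These last two parts reduce to known properties of $\psi$ plus the functoriality of $P^\infty$ already established in Section 3.

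The main obstacle I expect is \emph{the perfectoidness in the first paragraph}: establishing that the Manin-kernel equation is quasi-linear and that quasi-linearity suffices to force the relative Frobenius tower to stabilize to a perfectoid algebra. Unlike the smooth case, $\ker(\psi)$ is a genuinely singular (non-flat, non-reduced) closed formal subscheme of $J^2(E)$, so the naive argument used for $P^\infty$ does not apply verbatim; the whole technical heart of Sections 6--8 is designed to handle exactly this, and the proof of Theorem \ref{hoi} should be essentially a corollary once that analysis is in place. A secondary subtlety is that the superordinary hypothesis is used twice — once to guarantee quasi-linearity holds in the right form, and once in (1) to get the \emph{pro-\'etale} (rather than merely radicial) nature of the cover — so I would be careful to track where ``Serre--Tate parameter $\not\equiv 1\bmod p^2$'' is actually invoked.
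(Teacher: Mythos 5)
Your proposal matches the paper's proof: $P^{\sharp}(E)$ is taken to be $P^{\infty}(Y_2(E))$ for the arithmetic Manin kernel $Y_2(E)=\ker\psi$, quasi-linearity plus non-degeneracy (the latter being where superordinarity enters, via $\lambda_1\in R^{\times}$) feeds into the general machinery of Theorem \ref{thm1} and Corollary \ref{argue}, and parts (1)--(3) follow exactly as you describe. One small correction to your anticipated obstacle: the paper's quasi-linearity analysis shows the prolongations $Y_n$ are in fact flat over $R$ and even $p$-smooth in the non-degenerate case, so the Manin kernel is better behaved than your ``non-flat, non-reduced'' worry suggests.
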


The link between $P^{\sharp}(E)$ in Theorem \ref{hoi} and the map $P^{\infty}(\psi)$ in \ref{Ppsi} is provided by the fact that
the composition of $P^{\sharp}(E)\ra P^{\infty}(E)$ with the map $P^{\infty}(\psi)$ factors through the origin $0:P^{\infty}(\Spec R)\ra
P^{\infty}({\mathbb G}_a)$.
We actually expect that $P^{\sharp}(E)$ coincides with the fiber of $P^{\infty}(\psi)$ at $0$.

\begin{theorem} \label{boi}
Let $X$ be the $j$-line ${\mathbb A}^1$    over $R$ from which one removes the points $j=0,1728$ and the supersingular locus and let $Q:\text{Spec}\ R\ra X$ be a point corresponding to a superordinary elliptic curve $E$.
There exists 
a perfectoid space $P^Q(X)$ in ${\mathcal P}_{\Phi}$ 
plus a closed immersion $P^Q(X)\ra P^{\infty}(X)$ 
in ${\mathcal P}_{\Phi}$
such that the following properties hold.
 
\begin{enumerate}

\item The reduced locus $P^Q(X)_{\text{red}}$  equals  the perfection of a profinite pro-\'{e}tale cover of the reduction modulo $p$ of $X$. 

\item The map $P^{\infty}(Q):P^{\infty}(\Spec R)\ra P^{\infty}(X)$ factors through the described map $P^Q(X)\ra P^{\infty}(X)$.

\item For any point $Q':\Spec R\ra X$ corresponding to an elliptic curve $E'$ that has an isogeny of prime to $p$ degree to $E$, the curve $E'$ is superordinary and $P^Q(X)= P^{Q'}(X)$ in $P^{\infty}(X)$.

\end{enumerate}

 \end{theorem}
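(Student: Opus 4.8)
The plan is to realize $P^Q(X)$ as the perfectoid space attached, via the general construction of Sections 6--8, to a single quasi-linear arithmetic differential equation on $X$: the $\d$-isogeny class through $Q$. Recall the $\d$-period map $\wp:J^2(X)\ra \widehat{{\mathbb A}^2}$, whose two components are isogeny covariant differential modular forms, one of which vanishes precisely on the elliptic curves that admit a Frobenius lift (equivalently, whose Serre-Tate parameter is $\equiv 1 \bmod p^2$); recall also that $X$ is an affine open of the $j$-line, so that $P^{\infty}(X)=\Spa(\widehat{\mathcal B}[1/p],\widehat{\mathcal B})$ as in the Introduction. Since $R$ carries its Frobenius lift, the $R$-point $Q$ of $X$ prolongs canonically to an $R$-point $Q^{[2]}$ of $J^2(X)$, i.e.\ a morphism $\Spf R\ra J^2(X)$. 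The first step is to note that, $E$ being superordinary (hence without a Frobenius lift), the component above does not vanish at $Q^{[2]}$ --- it takes a unit value there --- so $\wp(Q^{[2]})\in {\mathbb A}^2(R)$ spans a free rank one direct summand, determining a line $L_Q\subset {\mathbb A}^2$ through the origin. We then set $N_Q:=\wp^{-1}(\widehat{L_Q})\subset J^2(X)$, which is the $\d$-isogeny class of $Q$ in the sense of \cite{equations}, a $\d$-invariant arithmetic differential equation on $X$; the crucial input from Sections 6--8 is that $N_Q$ is \emph{quasi-linear}.

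Granting this, we define $P^Q(X)$ by applying to $N_Q$ the construction attached to quasi-linear arithmetic differential equations: form the infinite jet algebra of $N_Q$ (it carries a canonical Frobenius lift, $N_Q$ being an object of $\d$-geometry), tensor with ${\mathbb K}^{\circ}$, $p$-adically complete, and pass to the direct limit along the induced relative Frobenius lift $\Phi$; quasi-linearity is exactly what makes the resulting ${\mathbb K}$-algebra perfectoid, with its principal cover and its $\Phi$, so $P^Q(X)$ lands in ${\mathcal P}_{\Phi}$. The closed immersion $P^Q(X)\ra P^{\infty}(X)$ in ${\mathcal P}_{\Phi}$ is obtained by applying $\Spa$ to the surjection from the infinite jet algebra of $X$ onto that of $N_Q$ induced by $N_Q\hookrightarrow J^2(X)$, which survives tensoring, completion and the relative Frobenius limit as a surjection of perfectoid algebras. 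Property (2) is then immediate from functoriality: by the definition of $L_Q$, $\wp(Q^{[2]})$ lies on $L_Q$, so the morphism $Q^{[2]}:\Spf R\ra J^2(X)$ factors through $N_Q$, whence $P^{\infty}(Q):P^{\infty}(\Spec R)\ra P^{\infty}(X)$ factors through $P^Q(X)\ra P^{\infty}(X)$.

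Property (1) will follow by combining the computation of the reduction modulo ${\mathbb K}^{\circ\circ}$ for perfectoid spaces attached to quasi-linear equations (the analogue of Theorem \ref{soare}, which yields the perfection of a Greenberg-type transform of the equation) with an explicit description of $N_Q$ in characteristic $p$: this reduction is a profinite pro-\'etale cover of $X_k$, of Igusa type. Here the removal of the supersingular locus from $X$ is what guarantees \'etaleness, while the superordinary hypothesis on $E$ is precisely the non-degeneracy condition keeping the tower genuinely infinite. For property (3), let $u$ be an isogeny of degree $d$ prime to $p$ relating $E$ and the curve $E'$ attached to $Q'$. Because $\gcd(d,p)=1$, $u$ induces an isomorphism of $p$-divisible groups over $R$, compatible with reduction modulo $p$; so by Serre-Tate theory the $p$-adic valuations of $q_E-1$ and $q_{E'}-1$ agree and $E'$ is again superordinary. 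Moreover, isogeny covariance of the components of $\wp$ --- equivalently, the fact that the projectivized $\d$-period map $\tilde{\wp}$ depends only on the prime-to-$p$ isogeny class \cite{difmod,equations} --- gives $\wp((Q')^{[2]})=\lambda\cdot\wp(Q^{[2]})$ for some unit $\lambda\in R^{\times}$, hence $L_{Q'}=L_Q$ and $N_{Q'}=N_Q$ as closed formal subschemes of $J^2(X)$; applying the construction to this common equation yields $P^{Q'}(X)=P^Q(X)$ inside $P^{\infty}(X)$.

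The main obstacle will be property (1): identifying the reduction modulo ${\mathbb K}^{\circ\circ}$ of $P^Q(X)$ on the nose. This forces us to make explicit both the characteristic $p$ reduction of the $\d$-isogeny class $N_Q$, recognizing it as a specific profinite pro-\'etale tower over the ordinary locus $X_k$, and the precise action of the reduction-modulo-${\mathbb K}^{\circ\circ}$ functor on the perfectoid algebra built from a quasi-linear equation, so that the perfection functor carries the one onto the other. The quasi-linearity analysis of Sections 6--8 is designed exactly to make this comparison work, and the superordinary hypothesis enters as the condition under which the relevant tower is both infinite and \'etale.
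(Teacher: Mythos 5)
Your proposal follows essentially the same route as the paper: $P^Q(X)$ is taken to be $P^{\infty}(Y_2(Q))$ where $Y_2(Q)=\wp^{-1}(L_{\lambda(E)})$ is the $\d$-isogeny class through $Q$, quasi-linearity and non-degeneracy of this equation (Theorem \ref{oop}) feed into the general perfectoid machinery for quasi-linear equations (Corollaries \ref{argue} and \ref{ppp}) to produce the perfectoid space, the closed immersion and property (1), functoriality gives (2), and isogeny covariance of the components of $\wp$ gives (3). One small correction to an aside: an ordinary curve admits a Frobenius lift iff its Serre--Tate parameter equals $1$ exactly, not merely $\equiv 1\bmod p^{2}$; the fact you actually use --- that the first component of $\wp$ takes a unit value at a superordinary point, so that $\lambda(E)\in R^{\times}$ and the line $L_{\lambda(E)}$ is well defined --- is nonetheless correct.
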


The link between $P^Q(X)$ in Theorem \ref{boi} and the map $P^{\infty}(\wp)$ in Equation~\ref{Pwp} is provided by the fact that
the composition of  $P^Q(X)\ra P^{\infty}(X)$ with $P^{\infty}(\wp)$ factors through an embedding $P^{\infty}(L)\ra P^{\infty}({\mathbb A}^2)$ where $L\subset {\mathbb A}^2$ is a line passing through the origin.
We actually expect that $P^Q(X)$ is the pull-back of $L$ via $P^{\infty}(\wp)$.

\begin{remark}
Note that the infinite arithmetic jet algebra $J^{\infty}(B)$ possesses a {\it Fermat quotient operator} $\d$ defined by $\d x=(\phi(x)-x^p)/p$, where $\phi$ is the canonical  Frobenius lift on $J^{\infty}(B)$. The operator $\d$ was viewed in \cite{char, equations} as an analogue of a derivation (there it was called a $p$-{\it derivation}) and a substantial body of the classical geometric theory of differential equations turned out to have an arithmetic analogue in the setting of arithmetic jet spaces. On the other hand the relative Frobenius lift $\Phi$ on 
$(J^{\infty}(B) \otimes_R{\mathbb K}^\circ)^{\widehat{\ }}$ is constructed from $\phi$ and hence can be viewed as a sort of substitute for a derivative. 
Taking the direct limit of the latter ring along $\Phi$ intuitively amounts to adjoining primitives
(or more generally ``adjoining pseudo-differential operators") 
 to the infinite arithmetic jet algebra and hence, in particular,  allowing a type of integration in addition to differentiation. It would be interesting to understand completely the arithmetic analogues of the analytic/geometric aspects these primitives/integration/pseudo-differential operators possess. One such aspect 
is brought forward by Theorem \ref{ruru} and its analogue for ${\mathbb G}_m$, cf. Theorems \ref{sandocal},  which can be interpreted as saying that  certain arithmetic differential equations defined by $\d$-characters can/cannot be ``integrated" using ``arithmetic analogues of pseudo-differential operators." For a comparison of our theory with usual differential equations in the   complex analytic case we refer to subsections \ref{complexGm} and \ref{complexelliptic}. \end{remark}

\bigskip\noindent
{\bf Acknowledgment.}
The first author 
is grateful to Max Planck Institute for Mathematics in Bonn for its hospitality and financial support and to the Simons Foundation  for support through awards 311773 and
615356.

\section{Terminology and notation} 

We quickly review many of the constructions and notations we need. Throughout the paper $p$ will be an odd prime in $\bZ$ and rings are assumed to be commutative with identity. Unless otherwise stated, throughout the paper, we fix a complete discrete valuation ring $(R,(p),k)$ with maximal ideal generated by $p$ and algebraically closed residue field $k=\overline{k}=R/pR$. 

\

For any ring $S$ or Noetherian scheme $X$ we denote by  $\widehat{S}$ and $\widehat{X}$ the $p$-adic completions and we sometimes set $\overline{S}=S/pS$ and $\overline{X}:=X\otimes \bZ/p\bZ$. Recall that $\widehat{S}/p^m\widehat{S}=S/p^mS$. If $S$ is $p$-torsion free then so is $\widehat{S}$. We also have need to consider the total integral closure. For  $A$ a  subring of a ring $B$ one says that {\it $A$ is totally integrally closed in $B$} provided for any $b\in B$ when $b^{{\mathbb N}}$ is contained in a finitely generated $A$-submodule of $B$ then $b\in A$. Note when $A$ is Noetherian, then $A$ is totally integrally closed in $B$ if and only if $A$ is integrally closed in $B$.

\

\noindent {\bf Direct limits and Perfection:} Let $A$ be a ring equipped with a ring endomorphism $\Phi$. The direct limit $C$ of $A$ along $\Phi$ is the direct limit  of the system 
$$C = \varinjlim_{x \mapsto \Phi(x)} = A \stackrel{\Phi}{\ra} A \stackrel{\Phi}{\ra} A  \stackrel{\Phi}{\ra}..., i.e.,$$

$$C:=\frac{A \times {\mathbb N}}{\sim},\ \ (a,i)\sim(\Phi(a),i+1).$$
The  class of a pair $(a,i)$ in $C$ will be denoted by $[a,i]$.

\

The direct limit of most interest for us is the colimit perfection. For any ring $A$ of characteristic $p$ one defines its {\it perfection} via the direct limit along its Frobenius
$$A_{\text{perf}}:=\varinjlim_{x \mapsto x^p} A.$$
In particular, $A_{\text{perf}}$ is the unique perfect ring initial among perfect rings receiving maps from $A$. The natural Frobenius on $A_{\text{perf}}$ is then surjective. If $A$ is also reduced then 
Frobenius on $A_{\text{perf}}$ is bijective.

\

\noindent {\bf Smoothness:} Inherently, perfections need not be noetherian. By a {\it smooth} (respectively {\it \'{e}tale}) algebra we understand a  $0$-smooth (respectively $0$-\'{e}tale) 
algebra in the sense of \cite[ Ch. 10 ]{matsumura}, i.e.,  smooth or \'{e}tale algebras here are not necessarily finitely generated. Recall,  for finitely generated maps of Noetherian rings smooth is equivalent to flat, with smooth geometric fibers, see \cite[Thm. 28.10]{matsumura}. 
An $A$-algebra $B$  will be called here  {\it ind-\'{e}tale} if 
$B$ is the ascending union of a sequence of subrings $(B_n)_{n\geq 0}$ such that $B_0=A$ and 
  $B_{n+1}$ is an \'{e}tale finitely generated $B_n$-algebra for all $n\geq 0$. Note that 
any ind-\'{e}tale algebra is \'{e}tale. If $B$ is ind-\'{e}tale over $A$ then we say $\text{Spec}\ B\ra \text{Spec}\ A$ is  {\it pro-\'{e}tale}.

\

\noindent{\bf Frobenius lifts and $p$-derivations: }  For a more complete reference, \cite{char, Jo}. By the {\it Frobenius} on a ring of characteristic $p$ we mean the $p$-power Frobenius endomorphism. A {\it Frobenius lift} on a ring $A$ is a ring endomomorphism of $A$ whose reduction modulo $p$ is the Frobenius on $\overline{A}=A/pA$. More generally if $u:A\ra B$ is an $A$-algebra a {\it Frobenius lift} is a  ring homomorphism $A\ra B$ whose reduction modulo $p$ is the Frobenius on $\overline{A}$ composed with the map $\overline{u}:\overline{A}\ra \overline{B}$ induced by $u$. 
Consider the polynomial
$$C_p(X,Y):=\frac{X^p+Y^p-(X+Y)^p}{p} \in \bZ[X,Y]\, .$$
For an $A$-algebra $B$
 a  set theoretic map $\d:A \ra B$ is a   {\it $p$-derivation}  if $\d 1=0$ and, for all $a,b\in A$, 

\begin{enumerate}
\item $\d(a+b)  =  \d a+\d b + C_p(a,b)$,
\item $\d(ab)  =  a^p \d b+b^p \d a + p \cdot \d a \cdot \d b.$
\end{enumerate}

\noindent If $\d$ is a $p$-derivation, then the map
$\phi:A  \ra B$ defined by 
$\phi(a):= a^p+p\d a$, 
is a Frobenius lift.
Conversely, if $\phi:A \ra B$ is a Frobenius lift and  $B$ is $p$-torsion free 
then one can define a $p$-derivation $\d:A\ra B$ by
$\d a:=\frac{\phi(a)-a^p}{p}$. We say that $\d$ and $\phi$ are {\it attached}
to each other. 

Some refer to a ring $A$ equipped with a $p$-derivation $\d:A\ra A$ as a {\it $\delta$}-ring. As a category, $\delta$-rings is closed under limits and colimits. More specifically, since $\phi$ and $\d$ commute, the direct limit of $A$ along $\phi$ has a $p$-derivation induced by $\d$ and a Frobenius lift induced by $\phi$ which of course are attached to each other.

\

As $R$ is a complete discrete valuation ring $R$ with algebraically closed residue field, it is isomorphic to the $p$-typical ring of Witt vectors over $k$ and has a unique Frobenius lift $\phi$; moreover $\phi$ is bijective. We let $K=R[1/p]$ be the fraction field of $R$. Nearly all rings we consider will be $R$-algebras.

\subsection{Perfectoid algebras and spaces} We now review the basic theory of perfectoid algebras and spaces. For a much more detailed account, please see \cite{bhatt,scholze}. Let $(\pi_m)_{m\geq 0}$ be a sequence of elements in the algebraic closure of $K=R[1/p]$ such that $\pi_0=p$, $\pi_{m+1}^p=\pi_m$, and write
$\pi_m:=p^{1/p^m}$.
Let us consider the rings
$$R[p^{1/p^m}]\simeq R[z_m]/(z_m^{p^m}-p),\ \ R[p^{1/p^{\infty}}]=\bigcup R[p^{1/p^m}],\ \ 
z_m\mapsto z_{m+1}^p,$$
where $z_m$ are variables whose classes correspond to $p^{1/p^m}$.
The reduction modulo $p$ of the map $R[p^{1/p^m}]\ra R[p^{1/p^{m+1}}]$ is clearly injective.
Set ${\mathbb K}^\circ=(R[p^{1/p^{\infty}}])^{\widehat{\ }}$. This is a {\it non-discrete} valuation ring and we denote by ${\mathbb K}^{\circ \circ}$ its maximal ideal so ${\mathbb K}^{\circ \circ}$ is generated by $\{p^{1/p^n}\}$. The Frobenius lift of $R$ extends uniquely to a ring automorphism $\Phi$ of ${\mathbb K}^\circ$ 
that fixes all $p^{1/p^m}$. 

\

The reduction modulo $p$,  $\overline{\Phi}:{\mathbb K}^\circ/p{\mathbb K}^\circ \ra {\mathbb K}^\circ/p{\mathbb K}^\circ$ is then, of course,  bijective.
On the other hand    the Frobenius $ {\mathbb K}^\circ/p{\mathbb K}^\circ \ra {\mathbb K}^\circ/p{\mathbb K}^\circ$ 
is surjective but not injective; in particular,  $\Phi$ is not a Frobenius lift. We refer to $\Phi$ as the {\it canonical relative Frobenius lift} on ${\mathbb K}^\circ$. We sometimes refer to this simply as the relative Frobenius lift on ${\mathbb K}^\circ$. Note that ${\mathbb K}^\circ/{\mathbb K}^{\circ \circ}=R/pR=k$ and the automorphism of ${\mathbb K}^\circ/{\mathbb K}^{\circ \circ}$ induced by $\Phi$ is the Frobenius. The ring ${\mathbb K}:={\mathbb K}^\circ[1/p]$ is the metric completion of the ring $K[p^{1/p^{\infty}}]$ and is a perfectoid field in the sense of \cite[Def. 3.1.1]{bhatt}.

\begin{definition}
Assume we are given a ${\mathbb K}^\circ$-algebra ${\mathcal B}$. A {\it relative Frobenius lift} on ${\mathcal B}$ is a ring endomorphism of ${\mathcal B}$ which extends the canonical relative Frobenius lift on ${\mathbb K}^\circ$ and such that the induced endomorphism of ${\mathcal B}/{\mathbb K}^{\circ \circ}{\mathcal B}$ is the Frobenius. Call a relative Frobenius {\it invertible} if is it bijective, whence a ring isomorphism.  
\end{definition}

\noindent Note, the canonical relative Frobenius on ${\mathbb K}$ is invertible.

\

A {\it perfectoid ${\mathbb K}$-algebra} is a Banach ${\mathbb K}$-algebra $B$ whose ring $B^\circ$ of power bounded elements is bounded and for which the Frobenius on $B^\circ/pB^\circ$ is surjective, see \cite[Def. 6.2.1 part 1]{bhatt}. A {\it perfectoid ${\mathbb K}^\circ$-algebra} will mean here an ${\mathbb K}^\circ$-algebra ${\mathcal B}$ which is $p$-adically complete, $p$-torsion free, totally integrally closed in ${\mathcal B}[1/p]$, and such that Frobenius  on ${\mathcal B}/p{\mathcal B}$ is surjective. 

Recall, from \cite[Prop. 5.2.5, Def. 6.2.1]{bhatt} that $B:={\mathcal B}[1/p]$ has then a canonical Banach ${\mathbb K}$-algebra structure  and with this structure  $B$ is a perfectoid ${\mathbb K}$-algebra with $B^\circ={\mathcal B}$. Moreover, $B^\circ$ is open in $B$ and its induced topology is the $p$-adic topology.
Also, by loc. cit., the functors ${\mathcal B}\mapsto {\mathcal B}[1/p]$ and $B\mapsto B^\circ$ define an equivalence between the category with objects consisting of perfectoid ${\mathbb K}$-algebras where morphisms are continuous homomorphisms and the category of perfectoid ${\mathbb K}^\circ$-algebras.

 If $B$ is a perfectoid ${\mathbb K}$-algebra, a {\it relative Frobenius lift} is an endomorphism of $B$ sending $B^\circ$ into $B^\circ$, such that the induced endomorphism of the ${\mathbb K}^\circ$-algebra $B^\circ$ is a relative Frobenius lift. We say $\Phi$ is {\it invertible} if the restriction of $\Phi$ to $B^\circ$ is invertible (i.e., bijective) in which case $\Phi$ itself is, of course, bijective.

\

For each perfectoid ${\mathbb K}$-algebra $B$ one can define a {\it perfectoid space} $\Spa(B,B^\circ)$;
 cf. \cite[Def. 2.13, 6.1, 6.15]{scholze} or \cite[Def. 7.3.1, 9.1.1]{bhatt}. These are also called {\it affinoid perfectoid spaces} and form the local charts of global projective spaces. We also recall that
 the if $U(1/g)$ is the rational open set of a perfectoid space $\Spa(B,B^\circ)$, defined by  $g\in B^\circ$, then we have an isomorphism  of perfectoid spaces
\begin{equation}
\label{mamama}
U(1/g)\simeq \Spa(B^\circ[1/g])\h[1/p],(B^\circ[1/g])\h).\end{equation}
 Indeed, in view of \cite[Def. 2.13, Prop. 2.15]{scholze}, 
$$U(1/g)\simeq \Spa(B\langle 1/g\rangle,B\langle 1/g\rangle^+)$$
 where $B\langle 1/g\rangle^+$ is the $p$-adic completion of the integral closure 
of $B^\circ[1/g]$ in $B[1/g]$ and $B\langle 1/g\rangle$ is the completion of $B[1/g]$ with respect to the group topology of $B[1/g]$ defined by the subgroups $p^nB^\circ[1/g]$, $n\geq 1$. 
Clearly $B\langle 1/g\rangle=B^\circ[1/g]\h[1/p]$.
So
it is enough to check that $B^\circ[1/g]$ is integrally closed  in $B[1/g]$.
But $B^\circ$ is totally integrally closed in $B$ so $B^\circ$ is integrally closed in $B$
so $B^\circ[1/g]$ is integrally closed in $B[1/g]$ by the compatibility of integral closure with fractions, which proves Equation~\ref{mamama}.

\

A morphism of perfectoid spaces will be called here a {\it closed immersion} if locally on the target it is of the form $\Spa(B,B^\circ)\ra \Spa(C,C^\circ)$ where $B^\circ\ra C^\circ$ is a surjection of perfectoid ${\mathbb K}^\circ$-algebras.

\subsection{Principal covers}
Our ultimate goal is to construct perfectoid spaces given schemes. On the affine subschemes, we will first construct affinoid perfectoid spaces built from Frobenius lifts. To control the process of gluing these affinoid perfectoid spaces, we utilize principal covers of schemes. 

In what follows, we will say that a family of  open sets $(X_i)_{i\in I}$ in a scheme $X$ is a {\it principal cover} if $X=\cup X_i$, each $X_i$ is affine, $X_i=\text{Spec}\ B_i$, and for each $i,j\in I$ 
the intersection $X_i\cap X_j$ is a principal open set in both $X_i$ and $X_j$, i.e., there exist
$s_{ij}\in B_i$ and $s_{ji}\in B_j$ such that
\begin{equation}
\label{principalcover}
X_i\cap X_j=\text{Spec}\ (B_i)_{s_{ij}}=\text{Spec}\ (B_j)_{s_{ji}}.\end{equation}

\

A principal open cover $(X_i)_{i\in I}$ is called a {\it principal basis} if it forms a basis for the topology of $X$. 
Any affine scheme has a {\it canonical principal basis} consisting of all its principal open sets. It is immediate to see that any quasi-projective scheme $X$ over an affine scheme has a principal basis. Indeed, when $X$ is open in $\text{Proj}\ A$ then a principal basis for $X$ is the set of principal open sets of $\text{Proj}\ A$ contained in $X$. Note however that such a principal basis is not canonical. If $X\simeq \text{Proj}\ A$ and $X\simeq \text{Proj}\ B$ then the principal bases on $X$ induced from these isomorphisms
may not be compatible in the sense that their union is not necessarily a principal basis. For instance if $X$ is, say,  a projective curve and $X\simeq \text{Proj}\ A$ and $X\simeq \text{Proj}\ B$ are induced by two embeddings of $X$ into projective spaces attached to linear systems of the form $|mP|$ and $|mQ|$ where $P$ and $Q$ are points of $X$ whose difference is not torsion in $\text{Pic}(X)$ then the intersection of $U:=X\backslash P$ and $V:=X\backslash Q$ is not principal in either  $U$ or $V$.

\

Note the following construction of principal bases.  Assume $X$ is a scheme with a principal covering $(X_i)$ and consider a new covering  of $X$ consisting of all 
principal open sets of all the $X_i$'s; we refer to this new covering as the {\it canonical refinement} of $(X_i)$. 

\begin{lemma}
The canonical refinement of a scheme $X$ with principal cover $(X_i)$ is a principal basis. 
\end{lemma}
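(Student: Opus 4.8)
The plan is to verify directly that the canonical refinement $\mathcal{R}$ of $(X_i)$ meets the three requirements in the definition of a principal cover and is moreover a basis. First I would record the easy points: every member of $\mathcal{R}$ is a principal open set $D_{X_i}(f)$ of some affine $X_i = \Spec B_i$, hence is itself affine (equal to $\Spec (B_i)_f$), and $\bigcup \mathcal{R} = \bigcup_i X_i = X$ since each $X_i$ is the principal open set $D_{X_i}(1)$. That $\mathcal{R}$ is a basis for the topology of $X$ is also routine: the $X_i$ cover $X$, and inside each affine $X_i$ the principal open sets form a basis for the subspace topology, so an arbitrary open $U \subseteq X$ is a union of principal open sets of the various $X_i$, i.e.\ a union of members of $\mathcal{R}$.

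The substantive point is the pairwise-intersection condition: given two members $U = D_{X_i}(f)$ with $f \in B_i$ and $V = D_{X_j}(g)$ with $g \in B_j$, I must show $U \cap V$ is a principal open set of both $U$ and $V$. The key reduction is to pass through $X_i \cap X_j$, which by hypothesis \eqref{principalcover} is principal in each $X_i$ and $X_j$: there are $s_{ij} \in B_i$ and $s_{ji} \in B_j$ with $X_i \cap X_j = D_{X_i}(s_{ij}) = D_{X_j}(s_{ji}) = \Spec (B_i)_{s_{ij}} = \Spec (B_j)_{s_{ji}}$. Now $U \cap V = (U \cap X_i \cap X_j) \cap (V \cap X_i \cap X_j)$, and working inside the affine scheme $X_i \cap X_j = \Spec (B_i)_{s_{ij}}$: the set $U \cap (X_i \cap X_j)$ is the principal open set defined by the image of $f$ in $(B_i)_{s_{ij}}$, while $V \cap (X_i \cap X_j)$ is the principal open set defined by the image of $s_{ji}$ in $(B_j)_{s_{ji}} \cong (B_i)_{s_{ij}}$ cut down by the image of $g$; concretely, $V \cap (X_i \cap X_j) = D_{X_i \cap X_j}(h)$ for a suitable $h \in (B_i)_{s_{ij}}$ obtained by transporting $g$ along the isomorphism $(B_j)_{s_{ji}} \cong (B_i)_{s_{ij}}$. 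Hence $U \cap V$ is the principal open set $D_{X_i \cap X_j}(fh)$ of $\Spec (B_i)_{s_{ij}}$. Finally, a principal open set of a principal open set of an affine scheme is again a principal open set of that affine scheme (if $W = \Spec A$, then $D_W(a) = \Spec A_a$ and $D_{D_W(a)}(b/a^n) = D_W(ab)$), so $U \cap V = D_{X_i}(s_{ij} \cdot (\text{lift of } fh))$ is principal in $X_i$, hence principal in $U = D_{X_i}(f)$; by symmetry it is principal in $V$ as well.

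I expect the only real bookkeeping obstacle to be the careful identification of $V \cap (X_i \cap X_j)$ as a principal open set of $\Spec (B_i)_{s_{ij}}$: one must transport $g \in B_j$ first to $(B_j)_{s_{ji}}$, then across the ring isomorphism $(B_j)_{s_{ji}} \cong (B_i)_{s_{ij}}$ furnished by \eqref{principalcover}, and clear denominators to land in $B_i$, all while tracking which localizations are being taken. This is elementary commutative algebra (compatibility of localization with restriction of schemes, transitivity of localization), so no genuine difficulty arises; the statement follows by assembling these observations.
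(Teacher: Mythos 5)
Your proof is correct and follows essentially the same route as the paper's: both reduce to the affine scheme $X_i\cap X_j$, observe that the intersection of two principal open sets there is principal, and then move the conclusion to $U$ (resp.\ $Y_i$) using the two elementary facts that principality of open immersions of affines is transitive and descends from $W$ to an intermediate principal open $V$. The only difference is cosmetic bookkeeping (you go up to $X_i$ and back down to $U$, the paper goes through $Y_i\cap X_j$), so no further comment is needed.
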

\begin{proof}
It is clearly a basis. To show it is principal let $Y_i$ be principal in $X_i$ and $Y_j$ be principal in $X_j$. We need to show that $Y_i\cap Y_j$ is principal in $Y_i$. 
Since $Y_i$ is principal in $X_i$ and $X_i\cap X_j$ is principal in $X_i$ it follows that $Y_i\cap X_j$ is principal in both
$Y_i$ and $X_i\cap X_j$. Similarly $Y_j\cap X_i$ is principal in both
$Y_j$ and $X_i\cap X_j$. Since both $Y_i\cap X_j$ and $X_i\cap Y_j$ are principal in $X_i\cap X_j$ it follows that their intersection $Y_i\cap Y_j$ is principal in $X_i\cap X_j$.
Since $Y_i\cap X_j$ is also principal in $X_i\cap X_j$ it follows  that $Y_i\cap Y_j$ is principal in $Y_i\cap X_j$. Indeed, it is a straightforward exercise to show that if $U\subset V\subset W$ are open immersions of affine schemes such that both $U$ and $V$ are principal in $W$ then $U$ is principal in $V$.  Since $Y_i\cap X_j$ is principal in $Y_i$ we get that $Y_i\cap Y_j$ is principal in $Y_i$. It is easy to check that if $U\subset V\subset W$ are open immersions of affine schemes such that  $U$ is principal in $V$  and $V$ is principal in $W$ then $U$ is principal in $W$. This proves our claim.
\end{proof}

We describe now a similar setup for perfectoid spaces. 

\begin{definition}
Assume  ${\mathbb X}$  is a perfectoid space. A family of  open sets $({\mathbb X}_i)_{i\in I}$ in ${\mathbb X}$ is a {\it principal cover} if ${\mathbb X}=\cup {\mathbb X}_i$, each  ${\mathbb X}_i=\Spa(B_i,B_i^\circ)$, with $B_i$ perfectoid, and for each $i,j\in I$ 
the intersection ${\mathbb X}_i\cap {\mathbb X}_j$ is a principal in both ${\mathbb X}_i$ and ${\mathbb X}_j$ in the sense that there exist
$s_{ij}\in B_i^\circ$ and $s_{ji}\in B_j^\circ$ such that
\begin{equation}
\label{principalcover2}
{\mathbb X}_i\cap {\mathbb X}_j=\Spa(((B_i^\circ)_{s_{ij}})^{\widehat{\ }}[1/p],
((B_i^\circ)_{s_{ij}})^{\widehat{\ }})=\Spa(((B_j^\circ)_{s_{ji}})^{\widehat{\ }}[1/p],
((B_j^\circ)_{s_{ji}})^{\widehat{\ }}).\end{equation}
\end{definition}

In general, perfectoid spaces do not a priori possess principal covers. Those that do may not a priori possess principal covers that form a basis for the topology.

\

Given a finite family of perfectoid algebras $(B_i)$, where each $B_i$ is an $R$-algebra, and elements $s_{ij}\in B_i$ and isomorphisms
 $\widehat{(B_i^\circ)_{s_{ij}}}\simeq \widehat{(B_j^\circ)_{s_{ji}}}$ satisfying the obvious cocycle condition one can attach to this data a perfectoid space ${\mathbb X}$ by gluing along the sets in Equation~\ref{mamama}, the perfectoid spaces
 $\Spa(B_i,B_i^\circ)$; the space ${\mathbb X}$ will come equipped with a natural principal cover.

Conversely, given a perfectoid space ${\mathbb X}$ with an affinoid cover ${\mathbb X}_i=\Spa(B_i,B_i^\circ)$, each $B_i$ an $R$-algebra, and equipped with a principal, cover one can construct a scheme
${\mathbb X}_k$ over $k$ by gluing, in the obvious way, the schemes
$$\Spec \ B_i^\circ/{\mathbb K}^{\circ \circ}B_i^\circ$$
via the identifications Equation~\ref{principalcover2}. We call 
${\mathbb X}_k$ the {\it reduction of ${\mathbb X}$ modulo ${\mathbb K}^{\circ \circ}$}.
 The construction ${\mathbb X}\mapsto {\mathbb X}_k$ depends on the principal cover
  but is functorial with respect to  morphisms in the category of perfectoid spaces that  respect the principal covers in the sense that they pull back members of a principal cover into open sets that are unions of members of principal covers.

\begin{definition}
Given a perfectoid space with a principal cover, by a {\it relative Frobenius lift} $\Phi$ on it we understand 
a collection of compatible relative Frobenius lifts $(\Phi_i)$ on the perfectoid algebras that define the principal cover. 
We say that $\Phi$ is {\it invertible} if all $\Phi_i$ are perfect i.e., ring isomorphisms.
\end{definition}

Finally, we collect this data categorically; verifying the definitions are straightforward. 

\begin{definition}
The category ${\mathcal P}_{\Phi}$ has objects are perfectoid spaces over ${\mathbb K}$ equipped with a principal cover and a relative Frobenius lift, and whose morphisms are the morphisms of perfectoid spaces that are compatible, as above, with the principal covers and with the relative Frobenius lifts. 
\end{definition}

We also obtain a natural functor, namely the {\it reduction modulo ${\mathbb K}^{\circ \circ}$} functor 
$$\text{res}:{\mathcal P}_{\Phi}\ra {\mathcal S}_k,\ \ \ {\mathbb X}\mapsto {\mathbb X}_k,$$  to the category of $k$-schemes; as described in the introduction.
 
\section{Perfectoid spaces attached to smooth schemes}

\subsection{The functor $P$}
Our first aim is to construct a functor 
$$(A,\phi)\mapsto P(A,\phi)$$
 from the category of 
 $p$-torsion free, $p$-adically complete, $R$-algebras for which $A/pA$ is reduced, equipped with a Frobenius lift $\phi$,
 to the category of perfectoid ${\mathbb K}$-algebras equipped with a relative Frobenius lift. We need a series of Lemmas. The first has a straightforward proof which we leave to the reader.

\begin{lemma}\label{l1} \ 

1) 
Let $A\ra B$ be a ring homomorphism such that  $A$ is $p$-adically separated and $B$ is $p$-torsion free. Assume the induced map $A/pA\ra B/pB$ is injective. Then $A\ra B$ is injective. 

2) Let $A\ra B$ be a ring homomorphism such that  $A$ is $p$-adically complete and $B$ is $p$-adically separated. Assume the induced map $A/pA\ra B/pB$ is surjective. Then $A\ra B$ is surjective. 

\end{lemma}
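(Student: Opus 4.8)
The plan is to prove both statements by the classical successive-approximation argument in the $p$-adic topology, matching each hypothesis to the precise step where it is used. In each case one either peels off powers of $p$ from a kernel element, or builds up a preimage as a $p$-adically convergent series.

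For part 1), I would take $a\in A$ in the kernel of $A\ra B$ and show that $a\in p^nA$ for all $n\geq 0$; $p$-adic separatedness of $A$ then forces $a=0$. Concretely I would run an induction producing elements $a_n\in A$ with $a=p^na_n$ and with $a_n$ mapping to $0$ in $B$. Starting from $a_0=a$, given such an $a_n$: its image in $B/pB$ is $0$, hence by injectivity of $A/pA\ra B/pB$ its image in $A/pA$ is $0$, so $a_n=pa_{n+1}$ for some $a_{n+1}\in A$; then $a=p^{n+1}a_{n+1}$, and since $p$ times the image of $a_{n+1}$ in $B$ equals the image of $a_n$, which is $0$, and $B$ is $p$-torsion free, the image of $a_{n+1}$ is $0$. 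This closes the induction.

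For part 2), given $b\in B$ I would build a convergent series in $A$ mapping to $b$. Using surjectivity of $A/pA\ra B/pB$ repeatedly, I would choose elements $a_0,a_1,a_2,\ldots\in A$ whose images in $B$, still denoted $a_i$ by abuse, satisfy $b-\sum_{i=0}^{n}p^i a_i\in p^{n+1}B$ for every $n$. Since $A$ is $p$-adically complete the series $a:=\sum_{i\geq 0}p^ia_i$ converges in $A$; its image in $B$ then differs from $b$ by an element of $\bigcap_{n}p^nB$, which is $0$ because $B$ is $p$-adically separated. Hence $a\mapsto b$, proving surjectivity.

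I do not expect a genuine obstacle here; the only thing requiring care is the bookkeeping of which hypothesis enters where — $p$-torsion freeness of $B$ is exactly what licenses the ``division by $p$'' on the target in part 1), $p$-adic separatedness of $A$ is what yields $a=0$ there, while in part 2) completeness of $A$ is needed for the limiting sum to exist and $p$-adic separatedness of $B$ is needed to identify that sum with $b$.
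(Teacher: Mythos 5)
Your argument is correct, and the hypotheses are used exactly where they should be: $p$-torsion freeness of $B$ to divide the image by $p$ and separatedness of $A$ to conclude in part 1), completeness of $A$ to sum the series and separatedness of $B$ to identify the limit in part 2). The paper explicitly leaves this lemma's proof to the reader as "straightforward," and your successive-approximation argument is precisely the intended one.
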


\begin{lemma}\label{l2}
Let $A_1\ra A_2\ra A_3\ra...$ be a sequence of ring homomorphisms such that
the homomorphisms $A_1/pA_1\ra A_2/pA_2\ra A_3/pA_3\ra ...$ are injective and $A_n$ are
$p$-torsion free,  $p$-adically separated, and totally integrally closed in $A_n[1/p]$ for all $n$.
The limit $$A=\varinjlim A_n$$ is $p$-torsion free and  totally integrally closed in $A[1/p]$.
\end{lemma}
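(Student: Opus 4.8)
The plan is to verify the two assertions separately, both reducing to statements about the filtered colimit $A=\varinjlim A_n$. First I would check $p$-torsion freeness: if $a\in A$ satisfies $pa=0$, then $a$ is represented by some $a_n\in A_n$ with $pa_n$ mapping to $0$ in $A_m$ for some $m\geq n$; replacing $n$ by $m$ we may assume $pa_n=0$ in $A_n$ itself, and since each $A_n$ is $p$-torsion free we get $a_n=0$, hence $a=0$. (One should note that a filtered colimit of $p$-torsion free rings is $p$-torsion free, which is exactly this argument.) The same kind of reduction shows $A/pA=\varinjlim A_n/pA_n$, and since the transition maps $A_n/pA_n\to A_{n+1}/pA_{n+1}$ are injective, each $A_n/pA_n\to A/pA$ is injective; combined with part 1 of Lemma \ref{l1} and $p$-adic separatedness, this gives that each $A_n\to A$ is injective, so we may regard the $A_n$ as an ascending chain of subrings of $A$, and correspondingly $A_n[1/p]\subseteq A[1/p]$ with $A[1/p]=\varinjlim A_n[1/p]=\bigcup_n A_n[1/p]$.

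The substantive part is total integral closure. Let $b\in A[1/p]$ be an element such that $b^{\mathbb N}=\{1,b,b^2,\dots\}$ is contained in a finitely generated $A$-submodule $M\subseteq A[1/p]$; I want $b\in A$. Pick generators $m_1,\dots,m_r$ of $M$. Since $A[1/p]=\bigcup_n A_n[1/p]$ and there are finitely many elements to locate — namely $b$, the $m_i$, and the coefficients expressing each needed power $b^k$ (for $k$ up to $r$, by the usual determinant trick one only needs finitely many relations) in terms of the $m_i$ — all of these lie in $A_n[1/p]$ for some sufficiently large fixed $n$. Then $M_n:=A_n m_1+\dots+A_n m_r$ is a finitely generated $A_n$-submodule of $A_n[1/p]$ containing $b^{\mathbb N}$: indeed $b^k\in M_n$ for $0\le k\le r$ directly, and for larger $k$ one multiplies the monic relation $b^r=\sum c_i b^i$ (with $c_i\in A_n$) through by powers of $b$ and argues by induction, using that the $c_i$ were chosen in $A_n$. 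Now total integral closedness of $A_n$ in $A_n[1/p]$ forces $b\in A_n\subseteq A$, as desired.

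The main obstacle — really the only place one must be careful — is the "descent to a finite level" step: one has to make sure that not only $b$ and the module generators $m_i$ but also a coherent finite package of the algebraic relations witnessing $b^{\mathbb N}\subseteq M$ can be captured at a single finite stage $n$, and that after descending, $b^{\mathbb N}$ still lands in the finitely generated $A_n$-module spanned by the $m_i$ over $A_n$ (not just over $A$). The clean way to arrange this is to first invoke the standard fact that "$b^{\mathbb N}$ lies in a finitely generated $A$-submodule of $A[1/p]$" is equivalent to "$b$ satisfies a monic-up-to-leading-unit, i.e.\ almost integral, relation" — more precisely that $A[b]$ is a finitely generated $A$-module — and then a single such relation $b^r+a_{r-1}b^{r-1}+\dots+a_0=0$ with $a_i\in A$ descends to $A_n$ for $n\gg 0$, immediately giving $A_n[b]$ finitely generated over $A_n$ and hence $b\in A_n$. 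I would present the argument in this second form, as it isolates exactly one finite relation to descend and makes the filtered-colimit bookkeeping transparent; $p$-adic separatedness of $A_n$ is not even needed for this part (it was only used, via Lemma \ref{l1}, to identify the $A_n$ with subrings of $A$).
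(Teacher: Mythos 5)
Your reduction to the case of inclusions $A_n\subseteq A$ and your treatment of $p$-torsion freeness are fine, but the total integral closure argument has a genuine gap: both versions of your descent step rest on the claim that ``$b^{\mathbb N}$ lies in a finitely generated $A$-submodule of $A[1/p]$'' yields a single monic relation $b^r+a_{r-1}b^{r-1}+\dots+a_0=0$ with $a_i\in A$ (equivalently, that $A[b]$ is a finitely generated $A$-module). That equivalence between almost integrality and integrality holds only for Noetherian $A$ --- precisely the point of the paper's remark that ``totally integrally closed'' and ``integrally closed'' agree in the Noetherian case --- and the colimit $A$ here is emphatically non-Noetherian (in the applications it is an infinite jet algebra). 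The determinant trick needs $bM\subseteq M$, which $b^{\mathbb N}\subseteq M$ does not give, and the $A$-module generated by all powers of $b$ need not be finitely generated. Without that single relation, witnessing $b^{\mathbb N}\subseteq M$ requires infinitely many coefficients $c_{k,i}\in A$, one family for each $k$, and no single finite stage $A_n$ contains them all; so $b^{\mathbb N}\subseteq M_n$ is not justified. A further warning sign is that your argument uses the mod-$p$ injectivity hypothesis only to embed $A_n$ into $A$, whereas the lemma needs it for more.

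The paper's proof avoids this entirely. From mod-$p$ injectivity and $p$-torsion freeness one first deduces $p^kA_{n+1}\cap A_n=p^kA_n$ for all $k$, hence the saturation property $A\cap A_{n_0}[1/p]=A_{n_0}$. Then one replaces the finitely generated module by $\frac{1}{p^{N_0}}A$ (clear denominators of the generators), locates $b$ --- and hence automatically every power $b^k$, since $A_{n_0}[1/p]$ is a ring --- in a single $A_{n_0}[1/p]$, and intersects: $p^{N_0}b^k\in A\cap A_{n_0}[1/p]=A_{n_0}$, so $b^{\mathbb N}\subseteq \frac{1}{p^{N_0}}A_{n_0}$ and total integral closedness of $A_{n_0}$ gives $b\in A_{n_0}\subseteq A$. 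The key idea you are missing is that one does not descend the integrality witnesses at all; one descends only $b$ and uses the saturation $A\cap A_{n_0}[1/p]=A_{n_0}$ to force the (already given) containments $p^{N_0}b^k\in A$ down to level $n_0$.
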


\begin{proof}
By Lemma \ref{l1} we may assume the maps $A_1\ra A_2\ra A_3\ra...$ are injections and up to conflating $A_i$ with its image, we may assume these are inclusions. 
By our injectivity hypothesis we have 
$$pA_{n+1}\cap A_n=pA_n$$
hence, since  $A_{n+1}$ is $p$-torsion free we get, by induction, that
$$p^kA_{n+1}\cap A_n=p^kA_n$$
for all $k$. This implies
$$A_{n+1}\cap \frac{1}{p^k} A_n=A_n,$$
hence
$A_{n+1}\cap A_n[1/p]=A_n$,
thus
$$A\cap A_n[1/p]=A_n.$$
Clearly $A$ is $p$-torsion free. Assume now $\alpha\in A[1/p]$ and $\alpha^{\mathbb N}$
is contained in a finitely generated $A$-submodule of $A[1/p]$, hence 
$$\alpha^{\mathbb N}\subset \frac{1}{p^{N_0}}A$$
 for some $N_0$. On the other hand $\alpha\in A_{n_0}[1/p]$ for some $n_0$. Hence
 $$p^{N_0} \alpha^{\mathbb N}\subset  A \cap A_{n_0}[1/p]=A_{n_0},\text{ so } \alpha^{\mathbb N}\subset \frac{1}{p^{N_0}}A_{n_0}$$
 Since $A_{n_0}$ is totally integrally closed in $A_{n_0}[1/p]$ it follows that $\alpha\in A_{n_0}$ so $\alpha\in A$ and we are done.
 \end{proof}

\newpage 

We now turn our attention to rings containing compatible systems of $p$-power roots of $p$. Recall the following result on total integrally closed rings and localization \cite[Lem. 5.1.3]{bhatt}. 

\begin{lemma}
\label{l3}
Assume $A$ is a $p$-torsion free ring admitting a compatible system of $p$-power roots
$p^{1/p^n}$. Assume $A$ is totally integrally closed in $A[1/p]$. Then $\widehat{A}$ is totally integrally closed in $\widehat{A}[1/p]$.
\end{lemma}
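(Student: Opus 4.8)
The plan is to approximate a power-bounded element of $\widehat A[1/p]$ by one of $A[1/p]$, apply the hypothesis on $A$, and transport the conclusion back along $A\to\widehat A$. Two standard facts recorded above will be used: since $A$ is $p$-torsion free so is $\widehat A$, and $\widehat A/p^{\ell}\widehat A=A/p^{\ell}A$ for every $\ell$, so that the composite $A\to\widehat A\to\widehat A/p^{\ell}\widehat A$ is the canonical surjection $A\to A/p^{\ell}A$ and hence the preimage of $p^{\ell}\widehat A$ under $A\to\widehat A$ is exactly $p^{\ell}A$. First I would take $b\in\widehat A[1/p]$ whose powers $b^{n}$, $n\ge 0$, all lie in one finitely generated $\widehat A$-submodule of $\widehat A[1/p]$ --- equivalently, $p^{N}b^{n}\in\widehat A$ for all $n$ and some fixed $N$ --- and aim to prove $b\in\widehat A$. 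Writing $b=c/p^{k}$ with $c\in\widehat A$, $k\ge 0$, and fixing $j\ge k$, I would use $\widehat A/p^{j}\widehat A=A/p^{j}A$ to pick $a\in A$ with $c=a+p^{j}\varepsilon$, $\varepsilon\in\widehat A$, and set $\beta:=a/p^{k}\in A[1/p]$; then $b=\beta+p^{j-k}\varepsilon$ in $\widehat A[1/p]$ with $p^{j-k}\varepsilon\in\widehat A$, so it suffices to show $\beta\in A$.

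The heart of the matter is to show $\beta$ is power-bounded over $A$ --- and by a finitely generated $A$-submodule of $A[1/p]$, so that total integral closedness of $A$ applies. Expanding $\beta=b-p^{j-k}\varepsilon$ by the binomial theorem, $p^{N}\beta^{n}=\sum_{i}\binom{n}{i}(-1)^{i}p^{(j-k)i}\varepsilon^{i}\,(p^{N}b^{n-i})$, and each summand lies in $\widehat A$ because $p^{N}b^{n-i}\in\widehat A$ and $j\ge k$; hence $p^{N}\beta^{n}\in\widehat A$ for all $n$. Since $\beta^{n}=a^{n}/p^{kn}$ with $a^{n}\in A$, for $n$ with $kn>N$ the identity $a^{n}/p^{kn-N}=p^{N}\beta^{n}\in\widehat A$ puts $a^{n}$ in the preimage of $p^{kn-N}\widehat A$, i.e.\ $a^{n}\in p^{kn-N}A$, so $\beta^{n}\in p^{-N}A$; the finitely many $n$ with $kn\le N$ give $\beta^{n}\in p^{-N}A$ trivially. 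Thus $\{\beta^{n}\}\subseteq p^{-N}A$, a finitely generated $A$-submodule of $A[1/p]$, so $\beta\in A$ because $A$ is totally integrally closed in $A[1/p]$; therefore $b=\beta+p^{j-k}\varepsilon\in\widehat A$, which completes the argument.

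The only genuinely delicate point, and the one I would watch most carefully, is the transfer of boundedness from $\widehat A$ down to $A[1/p]$: one must arrange that $\beta$ is bounded by an honest finitely generated $A$-module, not merely a $\widehat A$-module, so that the hypothesis on $A$ is applicable --- which is exactly what the identity ``preimage of $p^{\ell}\widehat A$ equals $p^{\ell}A$'' provides, together with the binomial estimate. Everything else is formal; in particular the compatible system of $p$-power roots of $p$ does not seem to enter this particular argument (it is part of the standing hypotheses because it is needed elsewhere in the perfectoid package).
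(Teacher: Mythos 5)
Your argument is correct. Note that the paper does not actually prove this lemma: it is quoted verbatim from Bhatt's lecture notes (Lem.\ 5.1.3 of \cite{bhatt}), so there is no internal proof to compare against; what you have written is a complete, self-contained substitute. The key steps all check out: the reduction of ``bounded by a finitely generated $\widehat A$-submodule'' to ``$p^N b^n \in \widehat A$ for all $n$''; the decomposition $b=\beta+p^{j-k}\varepsilon$ with $\beta\in A[1/p]$ obtained from the surjection $A\to \widehat A/p^j\widehat A=A/p^jA$; the binomial estimate showing $p^N\beta^n\in\widehat A$; and the descent $a^n\in p^{kn-N}\widehat A\cap A=p^{kn-N}A$, which uses exactly the identification $\widehat A/p^{\ell}\widehat A=A/p^{\ell}A$ (valid here because $A$ is $p$-torsion free, hence has bounded $p$-torsion). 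Your closing observation is also accurate: the compatible system of $p$-power roots of $p$ plays no role in this statement --- your proof establishes the slightly more general fact for any $p$-torsion free $A$ --- and that hypothesis is present only because it is needed in the surrounding lemmas (e.g.\ Lemma \ref{l4}) of the perfectoid package.
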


In conjunction, Lemmas~\ref{l2} and \ref{l3} combine to essentially show total integral closure persists along direct limits of endomorphisms identity along such $p$-power root systems.

\begin{lemma}
\label{l4}
Let $A$ be a $p$-torsion free $p$-adically separated ring containing a compatible system of $p$-power roots
$\{p^{1/p^n}\}$. 
Assume $A$ is equipped with a ring endomorphism $\Phi:A\ra A$ that is the identity on $\{p^{1/p^n}\}$ and assume the induced homomorphism $\overline{\Phi}:A/pA\ra A/pA$ is injective. 
 Assume moreover that $A$ is totally integrally closed in $A[1/p]$. Let
 $$C:=\varinjlim_{\Phi} A.$$
The completion $\widehat{C}$ is $p$-torsion free and totally integrally closed in $\widehat{C}[1/p]$.
\end{lemma}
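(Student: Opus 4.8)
The plan is to chain together the three preceding lemmas. First I would record that $C=\varinjlim_\Phi A$ can be realized concretely: since $\overline\Phi$ is injective, Lemma \ref{l1}(1) shows each $\Phi:A\to A$ is injective, so $C$ is the increasing union of copies $A_0\subset A_1\subset A_2\subset\cdots$ of $A$, where the inclusion $A_n\hookrightarrow A_{n+1}$ is the map $\Phi$. In particular every $A_n$ is $p$-torsion free, $p$-adically separated, and totally integrally closed in $A_n[1/p]$, and the maps $A_n/pA_n\to A_{n+1}/pA_{n+1}$ are the injective maps $\overline\Phi$. Thus the hypotheses of Lemma \ref{l2} are met.

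Next I would apply Lemma \ref{l2} directly to this system to conclude that $C=\varinjlim A_n$ is $p$-torsion free and totally integrally closed in $C[1/p]$. Since $C$ visibly still contains the compatible system $\{p^{1/p^n}\}$ (each $\Phi$ fixes it, so it survives to the colimit), $C$ is a $p$-torsion free ring with a compatible system of $p$-power roots of $p$ that is totally integrally closed in $C[1/p]$. This is precisely the input to Lemma \ref{l3}.

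Finally I would invoke Lemma \ref{l3} with the ring $C$ in place of $A$: it gives that $\widehat C$ is $p$-torsion free and totally integrally closed in $\widehat C[1/p]$, which is the assertion. So the proof is essentially a three-line bookkeeping argument: Lemma \ref{l1} to linearize the colimit as a union of inclusions, Lemma \ref{l2} to propagate total integral closure through the direct limit, and Lemma \ref{l3} to propagate it through $p$-adic completion.

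The only point requiring a little care — and the one I would expect to be the main (minor) obstacle — is the passage from the abstract direct limit along $\Phi$ to the concrete presentation as an ascending union of subrings: one must check that the transition maps are genuinely injective (which is where $\overline\Phi$ injective, via Lemma \ref{l1}(1), is used) and that the resulting $\varinjlim A_n$ agrees with $\varinjlim_\Phi A$, and that $p$-adic separatedness is inherited by each $A_n$ and the colimit. All of this is routine, but it is the step where the hypothesis ``$\overline\Phi$ injective'' is actually consumed; everything else is a formal application of the quoted lemmas. I should also note that $p$-adic separatedness of $C$ (needed implicitly) follows since $C\subset\widehat C[1/p]$ once one knows $C$ embeds in its completion, or alternatively it is not needed for Lemma \ref{l3} at all — only $p$-torsion freeness and total integral closure of $C$ are required there.
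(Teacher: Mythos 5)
Your proposal is correct and follows essentially the same route as the paper: apply Lemma \ref{l2} to the system $A\stackrel{\Phi}{\ra}A\stackrel{\Phi}{\ra}\cdots$ (the injectivity of the transition maps coming from Lemma \ref{l1}(1) is already absorbed into the proof of Lemma \ref{l2}), note that $\{p^{1/p^n}\}$ survives to $C$ because $\Phi$ fixes it, and conclude with Lemma \ref{l3}. Your closing observation that Lemma \ref{l3} needs only $p$-torsion freeness and total integral closure of $C$ is also exactly how the paper handles it.
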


\begin{proof}By Lemma \ref{l2}, $C$ is $p$-torsion free and totally integrally closed in $C[1/p]$.
In particular  $\widehat{C}$ is also $p$-torsion free.
On the other hand since $\Phi$ is the identity on $\{p^{1/p^n}\}$ the latter defines
a compatible system of $p$-power roots of $p$ in $C$. So, by Lemma \ref{l3},  $\widehat{C}$ is totally integrally closed in $\widehat{C}[1/p]$.
\end{proof}

We next address how these properties change under adjunction of $p$-power compatible root systems and tensoring by ${\mathbb K}^\circ$. 

\begin{lemma}\label{l5}
Let $A$ be a $R$-algebra that is $p$-torsion free, $p$-adically separated,  and for which $A/pA$ is reduced.
Let $A_m:=A\otimes_R R[p^{1/p^m}]$. Each $A_m$ is $p$-torsion free, $p$-adically separated,  and totally integrally closed in $A_m[1/p]$.
\end{lemma}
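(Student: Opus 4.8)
The plan is to establish the three claimed properties of $A_m := A \otimes_R R[p^{1/p^m}]$ one at a time, exploiting the concrete description $R[p^{1/p^m}] \simeq R[z_m]/(z_m^{p^m}-p)$ so that $A_m \simeq A[z_m]/(z_m^{p^m}-p)$ is a free $A$-module with basis $1, z_m, \ldots, z_m^{p^m-1}$. First I would handle $p$-torsion freeness: since $A_m$ is free over $A$ and $A$ is $p$-torsion free, an element $\sum a_i z_m^i$ killed by $p$ has all $p a_i = 0$, hence vanishes — so $A_m$ is $p$-torsion free. Next, $p$-adic separatedness: I would compute $A_m/p^N A_m$. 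Using $z_m^{p^m} = p$, every element of $A_m$ can be rewritten, and I expect to show $\bigcap_N p^N A_m = 0$ by reducing to the statement that $\bigcap_N p^N A = 0$ together with control of the $z_m$-adic contributions; more carefully, $p^N A_m = (z_m^{p^m N})A_m$ and one checks an element lying in all these powers of $z_m$ inside the free module $A[z_m]$ modulo $(z_m^{p^m}-p)$ must be zero, again using separatedness of $A$.

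The substantive part is the total integral closure of $A_m$ in $A_m[1/p]$. My approach would be to factor the extension $A \to A_m$ through a chain and apply Lemma \ref{l2}, or alternatively to argue directly. The cleanest route: note $R \to R[p^{1/p^m}]$ is finite free, hence $A \to A_m$ is finite free; and $A_m[1/p] = A[1/p] \otimes_K K[p^{1/p^m}]$, where $K[p^{1/p^m}] = K[z]/(z^{p^m}-p)$. Since $A/pA$ is reduced, I would first verify $A_m/pA_m$ is reduced: $A_m/pA_m \simeq (A/pA)[z_m]/(z_m^{p^m})$ — which is \emph{not} reduced! So the hypothesis ``$A_m/pA_m$ reduced'' is false in general, and this is precisely why the statement only asserts $p$-torsion freeness, separatedness, and total integral closure for $A_m$ (not reducedness mod $p$), and why the subsequent Lemma will pass to the completion and the direct limit along $\Phi$ where these nilpotents get absorbed. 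So I should \emph{not} try to prove reducedness of $A_m/pA_m$.

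For the total integral closure itself, the key idea is: $A$ totally integrally closed in $A[1/p]$ means $A$ is $p$-normal in a suitable sense, and I would leverage that $A_m$ is obtained from $A$ by adjoining $z_m$ with $z_m^{p^m}=p$, i.e. $A_m$ is (a quotient of) a polynomial-type extension, combined with the fact that $A[p^{1/p^m}]$ sits between $A$ and $A[1/p^{1/p^m}] = A[1/p]$. Concretely, suppose $\alpha \in A_m[1/p]$ with $\alpha^{\mathbb N}$ contained in a finitely generated $A_m$-module, hence in $\frac{1}{p^N}A_m$ for some $N$. Writing $\alpha = \sum_{i=0}^{p^m-1} \alpha_i z_m^i$ with $\alpha_i \in A[1/p]$, I want to conclude each $\alpha_i \in A$. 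The trick is to use the $\mu_{p^m}$-action (or rather the action by the Galois-type automorphisms $z_m \mapsto \zeta z_m$ over $A[1/p, \zeta]$, $\zeta$ a primitive $p^m$-th root of unity, noting $R$ contains $\mu_{p^m}$ since its residue field is algebraically closed and it is a complete DVR of mixed characteristic — actually one must be slightly careful about whether $\mu_{p^m} \subset R$; if not, base-change to $R[\zeta]$ and descend at the end) to isolate the coefficients $\alpha_i$ via Vandermonde, showing each $\alpha_i$ (up to a bounded power of $p$) has bounded powers, hence $\alpha_i \in A$ by total integral closure of $A$, possibly after clearing the denominator introduced by the Vandermonde determinant which is a power of $p$ times a unit. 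The main obstacle will be controlling these $p$-power denominators from the Vandermonde inversion and from the possible base change to $R[\zeta]$ — ensuring that ``$\alpha_i$ has powers in a finitely generated $A$-module'' survives; I expect this is handled by the observation that total integral closure is insensitive to inverting nothing and to the finite free base change $R \to R[\zeta]$, so descent along the trace or along the $\operatorname{Gal}$-action closes the argument.
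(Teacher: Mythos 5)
Your handling of $p$-torsion freeness and $p$-adic separatedness is fine and matches the paper's (both follow from $A_m$ being a finite free $A$-module), and you are right not to attempt reducedness of $A_m/pA_m$. But your argument for the substantive claim, total integral closure, has a genuine gap at its central step. Almost integrality is not preserved under division by a non-unit: from $\alpha^{\mathbb N}\subset\frac{1}{p^N}A_m$, extracting $\alpha_i$ by inverting a Vandermonde matrix gives $\alpha_i=\frac{1}{D}(\cdots)$ where $D$ is a fixed non-unit (a power of $z_m$ times $\prod(\zeta^i-\zeta^j)$, all of positive valuation), and then $\alpha_i^n=\frac{1}{D^n}(\cdots)^n$ has denominators growing with $n$. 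So at best you conclude $p^M\alpha_i\in A$ for some fixed $M$, i.e.\ $\alpha\in\frac{1}{p^M}A_m$ --- which is essentially the hypothesis you started from, not $\alpha\in A_m$; ``clearing the denominator'' at the end cannot recover this. A secondary problem is that $\mu_{p^m}\not\subset R$ in general (e.g.\ $R=W(k)$ is absolutely unramified, so $\zeta_p\notin R$ even though $k$ is algebraically closed), so the Galois action forces a ramified base change $R\to R[\zeta]$, and the trace/descent you allude to introduces further $p$-power denominators of exactly the same problematic kind.

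The telltale sign is that your total-integral-closure argument never invokes the hypothesis that $A/pA$ is reduced, which is the essential input. The paper's proof is a short valuation-style argument along the $\pi_m$-adic filtration, $\pi_m=p^{1/p^m}$: write $\alpha=a/\pi_m^{\nu}$ with $a\in A_m\setminus\pi_mA_m$ and $\nu\geq 1$ (if possible); then $\alpha^{\mathbb N}\subset\frac{1}{p^N}A_m$ forces $a^n\in\pi_m^{n\nu-p^mN}A_m$, so for $n\nu>p^mN$ the image of $a$ in $A_m/\pi_mA_m\simeq A/pA$ is nilpotent, hence zero by reducedness, contradicting $a\notin\pi_mA_m$. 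You should replace the coefficient-extraction step with an argument of this type.
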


\begin{proof}
Each $A_m$ is $p$-torsion free and $p$-adically separated because it is a direct sum of finitely many copies of $A$. Assume now $\alpha \in A_m[1/p]$ satisfies 
$$\alpha^{\mathbb N}\subset
\frac{1}{p^N}A_m.$$
 Write $\pi_m=p^{1/p^m}$ and $\alpha=\frac{a}{\pi_m^{\nu}}$ with $a\in A_m\backslash \pi_mA_m$ and $\nu\geq 0$. Assume $\nu\geq 1$ and seek a contradiction. We get that for any $n\geq 1$,
 $$a^n\in \pi_m^{n\nu -p^mN}A_m.$$
 Choose $n$ such that $n\nu> p^mN$. Then the image of $a^n$ in $A_m/\pi_mA_m$ is $0$. But $A_m/\pi_mA_m\simeq A/pA$ is reduced. So $a\in \pi_mA_m$, a contradiction.
\end{proof}

\begin{lemma}\label{l6}
Assume $A$ is as in Lemma \ref{l5} and furthermore set ${\mathcal A}=A\otimes_R {\mathbb K}^\circ$.
\begin{enumerate}

\item $\widehat{\mathcal A}$ is $p$-torsion free and totally integrally closed in $\widehat{\mathcal A}[1/p]$.

\item We have a natural isomorphism
\begin{equation}
\label{astept}
\widehat{\mathcal A}/{\mathbb K}^{\circ \circ}\widehat{\mathcal A}\simeq A/pA.
\end{equation}
\end{enumerate}
\end{lemma}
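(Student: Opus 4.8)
The plan is to reduce both assertions to the known behavior of total integral closure under the operations already assembled in Lemmas \ref{l4} and \ref{l5}, and to a direct computation of the quotient $\widehat{\mathcal A}/{\mathbb K}^{\circ\circ}\widehat{\mathcal A}$. First I would observe that ${\mathcal A}=A\otimes_R{\mathbb K}^\circ$ is the direct limit of the rings ${\mathcal A}_m:=A\otimes_R R[p^{1/p^m}]=A_m$ along the transition maps induced by $R[p^{1/p^m}]\ra R[p^{1/p^{m+1}}]$. By Lemma \ref{l5} each $A_m$ is $p$-torsion free, $p$-adically separated, and totally integrally closed in $A_m[1/p]$; since the reduction mod $p$ of $R[p^{1/p^m}]\ra R[p^{1/p^{m+1}}]$ is injective (noted in the excerpt just before the definition of ${\mathbb K}^\circ$), so are the maps $A_m/pA_m\ra A_{m+1}/pA_{m+1}$. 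Hence Lemma \ref{l2} applies to the sequence $(A_m)$ and gives that ${\mathcal A}=\varinjlim A_m$ is $p$-torsion free and totally integrally closed in ${\mathcal A}[1/p]$. Moreover ${\mathcal A}$ contains the compatible system $\{p^{1/p^m}\}$ coming from ${\mathbb K}^\circ$, so Lemma \ref{l3} (applied to ${\mathcal A}$) shows $\widehat{\mathcal A}$ is $p$-torsion free and totally integrally closed in $\widehat{\mathcal A}[1/p]$. This is part (1). (Alternatively one could invoke Lemma \ref{l4} with ``$\Phi=\mathrm{id}$'' after passing to a cofinal subsystem, but the direct route via Lemmas \ref{l2} and \ref{l3} is cleaner.)

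For part (2), I would compute $\widehat{\mathcal A}/{\mathbb K}^{\circ\circ}\widehat{\mathcal A}$ directly. Since $\widehat{\mathcal A}/p^m\widehat{\mathcal A}={\mathcal A}/p^m{\mathcal A}$ for all $m$ and ${\mathbb K}^{\circ\circ}$ is generated by $\{p^{1/p^n}\}_{n\ge 1}$, the quotient by ${\mathbb K}^{\circ\circ}\widehat{\mathcal A}$ only sees the quotient by the ideal generated by these roots, which is already killed modulo $p$; thus $\widehat{\mathcal A}/{\mathbb K}^{\circ\circ}\widehat{\mathcal A}={\mathcal A}/{\mathbb K}^{\circ\circ}{\mathcal A}$. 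Now ${\mathcal A}/{\mathbb K}^{\circ\circ}{\mathcal A}=A\otimes_R({\mathbb K}^\circ/{\mathbb K}^{\circ\circ})=A\otimes_R k=A/pA$, using the identity ${\mathbb K}^\circ/{\mathbb K}^{\circ\circ}=R/pR=k$ recorded in the excerpt. The only point requiring a line of care is the first equality: one should check that ${\mathbb K}^{\circ\circ}\widehat{\mathcal A}$ and $\{p^{1/p^n}:n\ge 1\}\widehat{\mathcal A}$ generate the same ideal, which follows because every element of ${\mathbb K}^{\circ\circ}$ is divisible by some $p^{1/p^n}$ in ${\mathbb K}^\circ$, together with the compatibility of $p$-adic completion with reduction modulo the (finitely-or-filtered-generated) ideal. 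Naturality in $A$ is immediate from the naturality of $\otimes_R{\mathbb K}^\circ$ and of $p$-adic completion.

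The main obstacle is the bookkeeping in part (1): making sure that Lemma \ref{l2}'s hypotheses are genuinely met by the system $(A_m)$ — in particular the injectivity of $A_m/pA_m\ra A_{m+1}/pA_{m+1}$, which rests on the $R$-module splitting of $R[p^{1/p^{m+1}}]$ over $R[p^{1/p^m}]$ and the flatness of $A$ over $R$ — and that the direct limit of the $A_m$ is literally ${\mathcal A}=A\otimes_R{\mathbb K}^\circ$ before completion (tensor products commute with filtered colimits). Everything else is formal once these identifications are in place.
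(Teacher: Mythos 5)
Your proof is correct and follows essentially the same route as the paper's: write ${\mathcal A}$ as the limit of the $A_m$, feed Lemma \ref{l5} and the injectivity of the mod-$p$ transition maps into Lemma \ref{l2}, pass to the completion via Lemma \ref{l3}, and compute the quotient in part (2) directly (the paper leaves that computation implicit). The only imprecision, shared with the paper, is that $\varinjlim A_m$ is literally $A\otimes_R R[p^{1/p^{\infty}}]$ rather than $A\otimes_R {\mathbb K}^\circ$; since the two rings agree modulo every $p^n$ and hence have the same $p$-adic completion, this does not affect either assertion.
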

\begin{proof}

By Lemma \ref{l2}, in order to check assertion 1  it is enough to check that ${\mathcal A}$ is $p$-torsion free and totally integrally closed in ${\mathcal A}[1/p]$.
With $A_m$ as in Lemma \ref{l5} we have
${\mathcal A}=\varinjlim A_m.$
In view of Lemma \ref{l2} and \ref{l5}, it is enough to check that $A_m/pA_m\ra A_{m+1}/pA_{m+1}$ are injective. These maps are injective because they are obtained  from 
the reduction modulo $p$ of the injective map $R[p^{1/p^m}]\ra R[p^{1/p^{m+1}}]$ via tensoring with $A/pA$ over the field $R/pR$. This ends the proof of assertion 1. Assertion 2 
follows from a direct computation.
\end{proof}

We now incorporate Frobenius lifts to the picture. 

\begin{lemma}\label{l7}
Assume $A$ as in Lemma \ref{l5} and ${\mathcal A}=A\otimes_R {\mathbb K}^\circ$.
Assume $A$ is equipped with a Frobenius lift $\phi$ 
and consider the unique endomorphism
$\Phi$ on ${\mathcal A}$ compatible with $\phi$ and with the relative Frobenius lift on ${\mathbb K}^\circ$. Also consider the extension of $\Phi$, abusively denoted by $\Phi$, on $\widehat{\mathcal A}$.
Set $${\mathcal B}=\varinjlim_{\Phi} \widehat{\mathcal A}.$$
\begin{enumerate}

\item The completion $\widehat{\mathcal B}$ is $p$-torsion free and totally integrally closed in $\widehat{\mathcal B}[1/p]$.

\item Frobenius is surjective on ${\mathcal B}/p{\mathcal B}$.

\item The endomorphism $\Phi$ of $\widehat{\mathcal A}$  is a relative Frobenius lift and so is the induced endomorphism $\Phi$ of $\widehat{\mathcal B}$.

\item There is a canonical isomorphism
$$\widehat{\mathcal B}/{\mathbb K}^{\circ \circ}\widehat{\mathcal B}\simeq (A/pA)_{\text{perf}}$$
and the endomorphism induced by $\Phi$ on $\widehat{\mathcal B}/{\mathbb K}^{\circ \circ}\widehat{\mathcal B}$ corresponds to the Frobenius on $(A/pA)_{\text{perf}}$.
\end{enumerate}
\end{lemma}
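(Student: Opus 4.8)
The plan is to deduce each of the four assertions from the lemmas already established, treating ${\mathcal B}=\varinjlim_\Phi \widehat{\mathcal A}$ as a direct limit of the single ring $\widehat{\mathcal A}$ along $\Phi$. First I would set up the hypotheses needed to apply Lemma \ref{l4} to $A':=\widehat{\mathcal A}$ with the endomorphism $\Phi$: by Lemma \ref{l6}(1), $\widehat{\mathcal A}$ is $p$-torsion free and totally integrally closed in $\widehat{\mathcal A}[1/p]$, and it is $p$-adically complete hence $p$-adically separated; the compatible system $\{p^{1/p^n}\}$ lives in ${\mathbb K}^\circ\subset \widehat{\mathcal A}$; and $\Phi$ is the identity on $\{p^{1/p^n}\}$ by construction (since the relative Frobenius lift on ${\mathbb K}^\circ$ fixes all $p^{1/p^m}$). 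The one point that needs checking before Lemma \ref{l4} applies is that $\overline{\Phi}:\widehat{\mathcal A}/p\widehat{\mathcal A}\to \widehat{\mathcal A}/p\widehat{\mathcal A}$ is injective. Here I would use the isomorphism $\widehat{\mathcal A}/p\widehat{\mathcal A}\simeq {\mathcal A}/p{\mathcal A}\simeq (A/pA)\otimes_{k}({\mathbb K}^\circ/p{\mathbb K}^\circ)$, under which $\overline{\Phi}$ becomes $\mathrm{Frob}_{A/pA}\otimes \overline{\Phi}_{{\mathbb K}^\circ}$; since $A/pA$ is reduced its Frobenius is injective, and $\overline{\Phi}$ on ${\mathbb K}^\circ/p{\mathbb K}^\circ$ is bijective (stated in the excerpt), so the tensor product of the two injective $k$-linear maps is injective (flatness over the field $k$). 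This gives assertion (1) directly from Lemma \ref{l4}, and it also records that $\Phi$ is injective modulo $p$, which I will reuse.

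For assertion (3), I would argue that $\Phi$ on $\widehat{\mathcal A}$ is a relative Frobenius lift: it extends the canonical relative Frobenius on ${\mathbb K}^\circ$ by construction, and modulo ${\mathbb K}^{\circ\circ}$ it induces, via $\widehat{\mathcal A}/{\mathbb K}^{\circ\circ}\widehat{\mathcal A}\simeq A/pA$ from Lemma \ref{l6}(2), precisely the map induced by $\phi$ on $A/pA$, which is the Frobenius because $\phi$ is a Frobenius lift on $A$. Passing to the colimit, $\Phi$ on ${\mathcal B}$ (and on $\widehat{\mathcal B}$, using that $\widehat{\mathcal B}/p^m=\mathcal B/p^m$) still extends the relative Frobenius on ${\mathbb K}^\circ$ and still induces Frobenius modulo ${\mathbb K}^{\circ\circ}$, since all the transition maps in the colimit are $\Phi$ itself; hence it is a relative Frobenius lift. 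For assertion (2), I would compute ${\mathcal B}/p{\mathcal B}=\varinjlim_{\overline\Phi}(\widehat{\mathcal A}/p\widehat{\mathcal A})$; on the colimit, the transition map $\overline{\Phi}$ becomes an automorphism (every element $[a,i]$ equals $[\overline\Phi(a),i+1]$, so $\overline\Phi$ is surjective on the limit, and we showed it is injective). It remains to see that the absolute Frobenius is surjective on ${\mathcal B}/p{\mathcal B}$: writing $\overline\Phi = \mathrm{Frob}_{A/pA}\otimes\overline\Phi_{{\mathbb K}^\circ}$ as above, and using that $\overline\Phi_{{\mathbb K}^\circ}$ is the Frobenius composed with a bijection on the perfect-in-the-limit ring ${\mathbb K}^\circ/p{\mathbb K}^\circ$, one sees $\overline\Phi$ differs from $\mathrm{Frob}_{\widehat{\mathcal A}/p}$ by the automorphism $\mathrm{id}\otimes(\overline\Phi_{{\mathbb K}^\circ}\circ \mathrm{Frob}^{-1})$; since $\overline\Phi$ is an automorphism on the limit, absolute Frobenius on ${\mathcal B}/p{\mathcal B}$ is surjective (indeed its image is all of the ring up to that automorphism).

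For assertion (4), I would start from Lemma \ref{l6}(2), $\widehat{\mathcal A}/{\mathbb K}^{\circ\circ}\widehat{\mathcal A}\simeq A/pA$, and take the colimit along $\Phi$: since ${\mathbb K}^{\circ\circ}$ is finitely generated up to radical and colimits commute with quotients, ${\mathcal B}/{\mathbb K}^{\circ\circ}{\mathcal B}\simeq \varinjlim_{\overline\Phi}(A/pA)$, where the transition map $\overline\Phi$ on $A/pA$ is, as noted, the Frobenius of $A/pA$ (the ${\mathbb K}^\circ$-part having been killed); hence ${\mathcal B}/{\mathbb K}^{\circ\circ}{\mathcal B}\simeq \varinjlim_{x\mapsto x^p}A/pA=(A/pA)_{\text{perf}}$ by definition, and the induced endomorphism is manifestly the Frobenius of the perfection. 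Finally I would pass from ${\mathcal B}$ to $\widehat{\mathcal B}$ using $\widehat{\mathcal B}/p^m\widehat{\mathcal B}=\mathcal B/p^m\mathcal B$, so that $\widehat{\mathcal B}/{\mathbb K}^{\circ\circ}\widehat{\mathcal B}={\mathcal B}/{\mathbb K}^{\circ\circ}{\mathcal B}$ (as ${\mathbb K}^{\circ\circ}\supset p{\mathbb K}^\circ$ makes this a quotient of ${\mathcal B}/p{\mathcal B}$), giving the stated isomorphism. The main obstacle I anticipate is assertion (2): the subtlety is that $\Phi$ is emphatically \emph{not} itself a Frobenius lift (its reduction mod $p$ is injective, unlike the true Frobenius), so surjectivity of the \emph{absolute} Frobenius on ${\mathcal B}/p{\mathcal B}$ has to be extracted from the interplay between $\overline\Phi$ becoming invertible in the colimit and the fact that $\overline\Phi$ factors through the absolute Frobenius up to the bijective "twist" coming from ${\mathbb K}^\circ/p{\mathbb K}^\circ$; getting this bookkeeping exactly right, rather than the formal colimit manipulations, is where care is needed.
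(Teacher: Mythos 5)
Your treatment of assertions (1), (3) and (4) is essentially the paper's: you reduce (1) to the injectivity of $\overline{\Phi}$ on ${\mathcal A}/p{\mathcal A}$ and invoke Lemmas \ref{l4} and \ref{l6}, and you obtain (3) and (4) from the isomorphism $\widehat{\mathcal A}/{\mathbb K}^{\circ\circ}\widehat{\mathcal A}\simeq A/pA$ by passing to the colimit. (A minor point in (1): the two maps you tensor are Frobenius-\emph{semilinear} over $k$ rather than $k$-linear, but since the Frobenius of $k$ is an automorphism a twist makes your flatness argument go through; the paper instead checks injectivity directly on $(A/pA)[z_m]/(z_m^{p^m})$.)

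The argument for assertion (2), however, has a genuine gap, and it sits exactly where you said care was needed. You claim that $\overline{\Phi}_{{\mathbb K}^\circ}$ is ``the Frobenius composed with a bijection'' and that $\overline{\Phi}$ differs from the absolute Frobenius of ${\mathcal A}/p{\mathcal A}$ by the automorphism $\mathrm{id}\otimes(\overline{\Phi}_{{\mathbb K}^\circ}\circ\mathrm{Frob}^{-1})$. This is false: the Frobenius of ${\mathbb K}^\circ/p{\mathbb K}^\circ$ is surjective but not injective (the class $\epsilon_1$ of $p^{1/p}$ is nonzero yet $\epsilon_1^p=0$), so $\mathrm{Frob}^{-1}$ does not exist there, and no identity of the form $\overline{\Phi}=\alpha\circ\mathrm{Frob}$ (or $\mathrm{Frob}\circ\alpha$) with $\alpha$ bijective can hold on ${\mathcal A}/p{\mathcal A}$, because the left-hand side is injective --- this is precisely what you established for assertion (1) --- while the right-hand side kills $1\otimes\epsilon_1\neq 0$. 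The same obstruction survives in the colimit ${\mathcal B}/p{\mathcal B}$, where $\overline{\Phi}$ is bijective but $[\epsilon_1,i]$ is a nonzero element annihilated by Frobenius; so ``Frobenius equals $\overline{\Phi}$ up to an automorphism'' cannot be the mechanism. What is true, and is all that is needed, is the weaker elementwise statement that $\overline{\Phi}(f)$ lies in the \emph{image} of the absolute Frobenius for every $f\in{\mathcal A}/p{\mathcal A}$: this holds for $f\in\overline{A}$ because there $\overline{\Phi}(f)=f^p$, and for $f\in{\mathbb K}^\circ/p{\mathbb K}^\circ$ because Frobenius is surjective on that ring, and the general case follows since sums and products of $p$-th powers are again $p$-th powers in characteristic $p$. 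Then every class satisfies $[f,n]=[\overline{\Phi}(f),n+1]=[g^p,n+1]=[g,n+1]^p$ in ${\mathcal B}/p{\mathcal B}$, which is the paper's proof of (2).
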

\begin{proof} Let us  prove assertion 1. In view of Lemmas \ref{l4} and \ref{l6}, it is enough to check that $\overline{\Phi}$
is injective on 
$${\mathcal A}/p{\mathcal A}=(A/pA)\otimes _{R/pR}({\mathbb K}^\circ/p{\mathbb K}^\circ).$$
It is enough to check that  $\overline{\Phi}$
is injective on 
$$A/pA\otimes _{R/pR}(R[p^{1/p^m}]/(p))=(A/pA)[z_m]/(z_m^{p^m}).$$
which is clear because $\overline{\Phi}$ acts like Frobenius on $A/pA$, which is reduced, and fixes the class of  $z_m$.

To prove assertion 2, note that 
$${\mathcal B}/p{\mathcal B}=\lim_{\stackrel{\ra}{\Phi}} {\mathcal A}/p{\mathcal A}.$$
To show that any element of ${\mathcal B}/p{\mathcal B}$ is a $p$-th power it is enough to show that for any element 
$f\in {\mathcal A}/p{\mathcal A}$ there is an element $g\in {\mathcal A}/p{\mathcal A}$ such that
$$(f,n)\sim (g^p,n+1)\in {\mathcal A}/p{\mathcal A}\times {\mathbb N}$$ i.e.,  $\Phi(f)=g^p$. It is enough to check this 
for $f\in \overline{A}$ and for $f\in {\mathbb K}^\circ/p{\mathbb K}^\circ$ separately. This follows from the fact that
$\overline{\Phi}$ acts as Frobenius on $\overline{A}$ and as the relative Frobenius
on ${\mathbb K}^\circ/p{\mathbb K}^\circ$, so in the first case we can take $g=f$ while in the second case
we can take $g$ to be any $p$-root of $\Phi(f)$, which is possible because Frobenius is surjective
on ${\mathbb K}^\circ/p{\mathbb K}^\circ$.

Finally  assertion 3 follows from the isomorphism Equation~\ref{astept} from Lemma~\ref{l6} and assertion 4 also follows.
\end{proof}

Our discussion leads to the first theorem which is an affine form of the theorems discussed in the introduction. 

\begin{theorem}\label{r1}
Let $A$ be a $R$-algebra that is $p$-torsion free, $p$-adically complete,  and for which $A/pA$ is reduced. Let ${\mathcal A}=A\otimes_R {\mathbb K}^\circ$.
Assume $A$ is equipped with a Frobenius lift $\phi$, consider the unique endomorphism 
$\Phi$ on ${\mathcal A}$ compatible with $\phi$ and with the relative Frobenius lift on ${\mathbb K}^\circ$, and consider the  extension $\Phi$ on $\widehat{\mathcal A}$.
Let $${\mathcal B}=\varinjlim_{\Phi} \widehat{\mathcal A},\ \ \ 
{\mathcal B}':=\varinjlim_{\Phi} {\mathcal A},\ \ \ 
P^\circ(A,\phi):=\widehat{\mathcal B}=\widehat{{\mathcal B}'},\ \ \ 
P(A,\phi):=\widehat{\mathcal B}[1/p]=\widehat{{\mathcal B}'}[1/p].
$$
The following hold:
\begin{enumerate}
\item The ${\mathbb K}$-algebra $P(A,\phi)$, with its natural Banach structure, is perfectoid.

\item We have
$$P(A,\phi)^\circ=P^\circ(A,\phi).$$

\item The endomorphism of $P(A,\phi)$ induced by $\Phi$ (which we still denote by $\Phi$) is an invertible relative Frobenius lift.

\item We have an isomorphism
 $$\widehat{\mathcal B}/{\mathbb K}^{\circ \circ}\widehat{\mathcal B}\simeq (A/pA)_{\text{perf}}.$$

 \item The rule $(A,\phi)\mapsto (P(A,\phi),\Phi)$ defines a functor
from the category of $p$-torsion free, $p$-adically complete $R$-algebras  for which $A/pA$ is reduced, equipped with a Frobenius lift,  to the category of perfectoid ${\mathbb K}$-algebras equipped with a  relative Frobenius lift.

\item The functor in 5) sends surjections into surjections.
\end{enumerate}
\end{theorem}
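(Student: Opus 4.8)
The plan is to prove the six assertions of Theorem \ref{r1} essentially as a bookkeeping exercise gathering the Lemmas \ref{l1}--\ref{l7}, since most of the analytic content has already been isolated there. First I would address the identification $\widehat{\mathcal B}=\widehat{{\mathcal B}'}$: the natural map ${\mathcal B}'=\varinjlim_\Phi {\mathcal A}\ra \varinjlim_\Phi \widehat{\mathcal A}={\mathcal B}$ induces an isomorphism modulo $p^m$ for every $m$ (each $\Phi$-stage map ${\mathcal A}/p^m\ra \widehat{\mathcal A}/p^m$ is an isomorphism by the standard fact $\widehat{S}/p^m\widehat{S}=S/p^mS$, and direct limits commute with reduction mod $p^m$), so after $p$-adic completion the two rings agree. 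With that in hand, assertion 1 and assertion 2 follow from Lemma \ref{l7}: parts (1)--(3) of that lemma say precisely that $\widehat{\mathcal B}$ is $p$-torsion free, $p$-adically complete (it is a completion), totally integrally closed in $\widehat{\mathcal B}[1/p]$, and has surjective Frobenius on $\widehat{\mathcal B}/p\widehat{\mathcal B}$ (surjectivity on ${\mathcal B}/p{\mathcal B}$ passes to the completion since ${\mathcal B}/p{\mathcal B}=\widehat{\mathcal B}/p\widehat{\mathcal B}$). Hence $\widehat{\mathcal B}$ is a perfectoid ${\mathbb K}^\circ$-algebra in the sense recalled from \cite{bhatt}, so by \cite[Prop. 5.2.5, Def. 6.2.1]{bhatt} the ring $P(A,\phi)=\widehat{\mathcal B}[1/p]$ carries a canonical Banach ${\mathbb K}$-algebra structure making it perfectoid with $P(A,\phi)^\circ=\widehat{\mathcal B}=P^\circ(A,\phi)$; this is assertions 1 and 2.

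For assertion 3 I would invoke Lemma \ref{l7}(3), which states that $\Phi$ on $\widehat{\mathcal B}$ is a relative Frobenius lift; invertibility is the one genuinely new point, and it holds because on the direct limit ${\mathcal B}=\varinjlim_\Phi\widehat{\mathcal A}$ the endomorphism $\Phi$ is \emph{bijective} by the general formalism of direct limits along an endomorphism (the shift $[a,i]\mapsto[a,i+1]$ is inverse to $\Phi$), and bijectivity is preserved by $p$-adic completion since $\widehat{\mathcal B}/p^m=\mathcal B/p^m$ for all $m$; by the definition recalled in the text, invertibility of the restriction of $\Phi$ to $P(A,\phi)^\circ$ is exactly what makes $\Phi$ an invertible relative Frobenius lift on $P(A,\phi)$. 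Assertion 4 is Lemma \ref{l7}(4) verbatim.

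For assertion 5, functoriality, given a morphism $(A,\phi)\ra(A',\phi')$ of such $\delta$-algebras I would tensor with ${\mathbb K}^\circ$ to get $\mathcal A\ra\mathcal A'$ compatible with the $\Phi$'s, pass to the direct limits $\mathcal B\ra\mathcal B'$ (functoriality of $\varinjlim_\Phi$), complete to get $\widehat{\mathcal B}\ra\widehat{\mathcal B}'$, and invert $p$; compatibility with $\Phi$ and with the identity/composition laws is immediate from the construction, and compatibility with the Banach structures follows because any ${\mathbb K}^\circ$-algebra map of perfectoid ${\mathbb K}^\circ$-algebras induces a continuous ${\mathbb K}$-algebra map on inverting $p$ (the topology on $P(A,\phi)$ being the $p$-adic one on $\widehat{\mathcal B}$). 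Finally, for assertion 6, suppose $A\twoheadrightarrow A'$; then $\mathcal A=A\otimes_R{\mathbb K}^\circ\twoheadrightarrow A'\otimes_R{\mathbb K}^\circ=\mathcal A'$, and since the $\Phi$'s are compatible the map of direct systems is levelwise surjective, so $\mathcal B\twoheadrightarrow\mathcal B'$; reducing mod $p$ this stays surjective, so $\widehat{\mathcal B}\ra\widehat{\mathcal B}'$ is surjective by Lemma \ref{l1}(2) (source $p$-adically complete, target $p$-adically separated, surjective mod $p$), and inverting $p$ preserves surjectivity.

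I expect the only mild subtlety to be assertion 3 --- making sure that ``invertible'' is read on $P^\circ$ rather than naively on $P$ --- together with the need to be careful, throughout, that completion does not disturb the limits; both are handled by the repeated use of $\widehat{S}/p^m\widehat{S}=S/p^mS$ and the commutation of filtered colimits with mod-$p^m$ reduction. No step should present a real obstacle: the theorem is a clean packaging of Lemmas \ref{l1}--\ref{l7} plus the cited equivalence of categories between perfectoid ${\mathbb K}$-algebras and perfectoid ${\mathbb K}^\circ$-algebras.
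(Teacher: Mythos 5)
Your proof is correct and takes essentially the same route as the paper: assertions 1, 2, 4 and 5 are read off from Lemmas \ref{l1}--\ref{l7} together with the cited equivalence between perfectoid ${\mathbb K}$-algebras and perfectoid ${\mathbb K}^{\circ}$-algebras, and the only points the paper itself elaborates are assertion 6 and the invertibility of $\Phi$. The one local difference is your argument for bijectivity of $\Phi$ on the colimit: the paper deduces injectivity from reducedness of $A/pA$ (injectivity of $\phi$ on $A$, then flatness of ${\mathbb K}^{\circ}$) and notes surjectivity is clear, whereas your observation that the shift $[a,i]\mapsto[a,i+1]$ is a two-sided inverse of $\Phi$ on $\varinjlim_{\Phi}$ is valid, slightly simpler, and does not use the reducedness hypothesis at that step.
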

\begin{proof}
Assertion 6 in the Theorem follows from the fact that the functors 
$$\varinjlim,\ \otimes_R{\mathbb K}^\circ,\ \ \widehat{\ }$$
send surjections into surjections; cf. \cite[Thm. 8.1 and Appendix A]{matsumura}.
The only assertion that remains to be checked in that $\Phi$ 
in assertion 3 is invertible, i.e., that it 
is bijective on $P^\circ(A,\phi)$.
This can be shown as follows. Since $A/pA$ is reduced the reduction modulo $p$ of $\phi$ is an injective map $A/pA\ra A/pA$. By assertion 1 in Lemma \ref{l1},
 $\phi$ is injective on $A$. Since ${\mathbb K}^\circ$ is $R$-flat it follows that $\Phi$ is injective on ${\mathcal A}$ hence $\Phi$ is injective on ${\mathcal B}'$. But clearly 
$\Phi$ is surjective, and hence bijective, on ${\mathcal B}'$. So $\Phi$ is bijective on $P^\circ(A,\phi)$.
\end{proof}

\begin{remark}
The  functor $P$ may be extended to the category of not necessarily $p$-adically complete
$p$-torsion free $R$-algebras  for which $A/pA$ is reduced, equipped with a Frobenius lift, via the formula $P(A,\phi)=P(\widehat{A},\phi)$.
\end{remark}

The next lemma will be fundamental to globalizing Theorem~\ref{r1}. 

\begin{lemma}
\label{d1}
For any $s\in A$ we claim that have a canonical isomorphism
$$P^\circ((A_s)^{\widehat{\ }},\phi)\simeq ((P^\circ(A,\phi))_s)^{\widehat{\ }},$$
where $\phi$ in the left hand side is a naturally induced Frobenius lift on $(A_s)^{\widehat{\ }}$ and the subscript $s$ in the right hand side signifies taking fractions with respect to the image of $(s,1)$. 
\end{lemma}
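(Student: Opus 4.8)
The plan is to unwind both sides of the claimed isomorphism using the defining colimit-and-completion recipe for $P^\circ$ from Theorem~\ref{r1}, and then to commute the three operations involved --- localization at $s$, $p$-adic completion, and the direct limit along the relative Frobenius lift $\Phi$ --- using that localization is exact and flat and that the Frobenius lift is compatible with the map $A \to A_s$. First I would fix notation: write $\phi_s$ for the Frobenius lift induced on $(A_s)^{\h}$ (it exists and is unique because $\phi(s)$ is a unit in $(A_s)^{\h}$ up to the standard $\d$-ring formula $\phi(s) = s^p + p\,\d s$, so $\phi$ extends uniquely to the localization and then to its completion; this is a routine $\d$-ring fact, cf.\ the discussion of $\d$-rings in Section 2). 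Set ${\mathcal A} = A \otimes_R {\mathbb K}^\circ$ and ${\mathcal A}_s = (A_s)^{\h} \otimes_R {\mathbb K}^\circ$, and observe $(A_s)^{\h} \otimes_R {\mathbb K}^\circ$ has the same $p$-adic completion as $(A \otimes_R {\mathbb K}^\circ)_s = {\mathcal A}_s'$ since localization commutes with the flat base change $\otimes_R {\mathbb K}^\circ$ and $p$-adic completion is insensitive to pre-completing.

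Next I would carry out the following chain of canonical identifications, justifying each step. Because the image of $s$ in ${\mathcal A}$ is sent by $\Phi$ to a unit-times-$p$-th-power modulo $p$ (same $\d$-ring formula), the element $(s,1) \in {\mathcal B}' = \varinjlim_\Phi {\mathcal A}$ has image under $\Phi$ differing from a $p$-th power by a unit, and in any case localizing the colimit at $(s,1)$ agrees with forming the colimit of the localizations: $({\mathcal B}')_{(s,1)} = \varinjlim_\Phi ({\mathcal A}_{s})'$ where ${\mathcal A}_s'$ denotes ${\mathcal A}$ localized at the image of $s$ --- here one uses that filtered colimits commute with localization and that $\Phi$ is a ring map compatible with $s \mapsto \Phi(s)$, so the transition maps in the localized system are the natural ones. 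Passing to $p$-adic completions and invoking Lemma~\ref{l3}/Lemma~\ref{l7} to know the completion is again the object $P^\circ$, one gets
$$
\bigl( P^\circ(A,\phi)_s \bigr)^{\h} = \bigl( (\widehat{{\mathcal B}'})_s \bigr)^{\h} = \bigl( (({\mathcal B}')_{(s,1)})^{\h} \bigr)^{\h} = \bigl( \varinjlim_\Phi ({\mathcal A}_s')^{\h} \bigr)^{\h} = \varinjlim_\Phi^{\h} \widehat{{\mathcal A}_s}.
$$
The rightmost expression is, by the very definition in Theorem~\ref{r1} applied to the ring $(A_s)^{\h}$ with its Frobenius lift $\phi_s$ (using that $(A_s)^{\h}/p(A_s)^{\h} = (A/pA)_s$ is still reduced, being a localization of the reduced ring $A/pA$), equal to $P^\circ((A_s)^{\h},\phi)$. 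Running this chain backwards, or assembling the canonical maps in one direction and checking they are mutually inverse modulo $p^m$ for all $m$ (using $\widehat{S}/p^m\widehat{S} = S/p^mS$), gives the desired isomorphism.

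The step I expect to be the main obstacle is the bookkeeping around \emph{which} localization is being taken and verifying that all the identifications are canonical and independent of the choices involved in representing classes $[a,i] \in {\mathcal B}'$: one must check that localizing $\varinjlim_\Phi {\mathcal A}$ at the class $[s,1]$ is the same as $\varinjlim_\Phi$ of the localizations of ${\mathcal A}$ at $s$ (rather than at $\Phi(s), \Phi^2(s), \dots$), and that these two systems have canonically isomorphic colimits because the transition map ${\mathcal A}_s \to {\mathcal A}_{\Phi(s)}$ induced by $\Phi$ is an isomorphism onto ${\mathcal A}_{\Phi(s)}$ as an ${\mathcal A}$-algebra after identifying $\Phi(s)$ with $s$ via $\Phi$ --- equivalently, $\Phi$ being a ring endomorphism, it carries $s$-inverted elements to $\Phi(s)$-inverted elements, and in the colimit all these become identified. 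A secondary technical point is confirming the Frobenius lift $\phi_s$ on $(A_s)^{\h}$ that one gets by this colimit-theoretic route is the \emph{same} as the ``naturally induced'' one referred to in the statement; this follows from uniqueness of extensions of Frobenius lifts along the localization map $A \to A_s$ (the target being $p$-torsion free), which I would state as a small lemma or cite from the $\d$-ring generalities in Section~2. Everything else --- exactness of localization, flatness of ${\mathbb K}^\circ$ over $R$, commuting completion past completion, and reducedness of $(A/pA)_s$ --- is routine and can be quoted from the lemmas already established.
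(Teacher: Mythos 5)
Your proposal is correct and follows essentially the same route as the paper's proof: the key points in both are that $\phi(s)\in s^p+pA$ makes the $p$-adic completions of the localizations at $s$ and at $\phi(s)$ (hence at $\Phi^n(s)$) coincide, that $((A_s)^{\widehat{\ }}\otimes_R{\mathbb K}^\circ)^{\widehat{\ }}\simeq({\mathcal A}_s)^{\widehat{\ }}$, and that the direct limit along $\Phi$ commutes with localization followed by completion. The paper states these three identifications without elaboration; you supply the same identifications with the bookkeeping spelled out.
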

\begin{proof}
This follows because, when
$\phi(s)\in s^p+pA$, we  have induced maps
$$\phi:(A_s)^{\widehat{\ }}\ra (A_{\phi(s)})^{\widehat{\ }}=(A_s)^{\widehat{\ }},$$
and
$$\Phi:((A_s)^{\widehat{\ }}\otimes_R {\mathbb K}^\circ)^{\widehat{\ }}\ra ((A_{\phi(s)})^{\widehat{\ }}\otimes_R {\mathbb K}^\circ)^{\widehat{\ }}=((A_s)^{\widehat{\ }}\otimes_R {\mathbb K}^\circ)^{\widehat{\ }}.$$
On the other hand
$$((A_s)^{\widehat{\ }}\otimes_R {\mathbb K}^\circ)^{\widehat{\ }}\simeq ({\mathcal A}_s)^{\widehat{\ }},\ \ {\mathcal A}=A\otimes_R {\mathbb K}^\circ$$ and
$$\lim_{\stackrel{\ra}{\Phi}} (({\mathcal A}_s)^{\widehat{\ }})\simeq ((\lim_{\stackrel{\ra}{\Phi}} \widehat{\mathcal A})_s)^{\widehat{\ }},$$
which ends the proof of our claim.
\end{proof}

The construction in Theorem \ref{r1} possesses an expected universality property.

\begin{theorem}\label{uniuni} Assume the notation of Theorem \ref{r1} and let 
$\iota:A\ra P^\circ(A,\phi)$ be the homomorphism defined by letting
$\iota(a):=[a\otimes 1,1]\in {\mathcal B}'$, $a\otimes 1\in A\otimes_R{\mathbb K}^\circ$.
Also let $C$ be a $p$-adically complete ${\mathbb K}^\circ$-algebra
equipped with an invertible relative Frobenius lift $\Phi_C$. 

For any $R$-algebra homomorphism
$u_1:A\ra C$ satisfying $$u_1\circ \phi=\Phi_C \circ u_1$$ there exists a unique ${\mathbb K}^\circ$-algebra homomorphism $u:P^\circ(A,\phi)\ra C$ such that 
$$u\circ \iota=u_1,\ \ \ \text{and}\ \ \ \Phi_C\circ u=u_1\circ \Phi.$$
\end{theorem}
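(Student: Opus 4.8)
The plan is to construct $u$ by passing to the limit in stages, using the universal properties of each construction in the definition of $P^\circ(A,\phi)$, and then to verify uniqueness and the two asserted identities. Recall $P^\circ(A,\phi) = \widehat{\mathcal B}$ where ${\mathcal B} = \varinjlim_\Phi \widehat{\mathcal A}$, $\widehat{\mathcal A} = (A\otimes_R {\mathbb K}^\circ)\h$; equivalently, as noted in Theorem \ref{r1}, $P^\circ(A,\phi) = \widehat{{\mathcal B}'}$ with ${\mathcal B}' = \varinjlim_\Phi {\mathcal A}$. I will work with the ${\mathcal B}'$ description since it avoids one completion.

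First I would produce the map out of ${\mathcal A} = A\otimes_R {\mathbb K}^\circ$. The homomorphism $u_1:A\ra C$ is an $R$-algebra map into a ${\mathbb K}^\circ$-algebra, so it extends uniquely to a ${\mathbb K}^\circ$-algebra map $u_1':{\mathcal A}\ra C$ by the universal property of base change. One checks $u_1' \circ \Phi = \Phi_C \circ u_1'$: on the image of $A$ this is the hypothesis $u_1\circ\phi = \Phi_C\circ u_1$, on ${\mathbb K}^\circ$ it holds because $\Phi$ restricts to the canonical relative Frobenius lift on ${\mathbb K}^\circ$ and $\Phi_C$ extends that same lift, and these two cases generate ${\mathcal A}$ as a ring. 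Next, I would use the universal property of the direct limit along $\Phi$. Since $\Phi_C$ is invertible, we have compatible maps ${\mathcal A} \stackrel{\Phi_C^{-n}\circ u_1'}{\longrightarrow} C$ for each copy of ${\mathcal A}$ in the direct system, and these glue (the transition map is $\Phi$, and $\Phi_C^{-(n+1)}\circ u_1' \circ \Phi = \Phi_C^{-(n+1)}\circ \Phi_C \circ u_1' = \Phi_C^{-n}\circ u_1'$) to a well-defined map $v:{\mathcal B}' \ra C$, characterized by $v([a,n]) = \Phi_C^{-n}(u_1'(a))$. By construction $v$ satisfies $\Phi_C \circ v = v\circ \Phi$ on ${\mathcal B}'$. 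Finally, since $C$ is $p$-adically complete, $v$ extends uniquely to $u:\widehat{{\mathcal B}'} = P^\circ(A,\phi)\ra C$, and $\Phi_C\circ u = u\circ \Phi$ persists by continuity. The identity $u\circ \iota = u_1$ is immediate: $\iota(a) = [a\otimes 1, 1]$, so $u(\iota(a)) = v([a\otimes 1,1]) = \Phi_C^{-1}(u_1'(a\otimes 1))$ — wait, this forces a reconsideration of the normalization.

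Here is the subtle point, and I expect it to be the main thing requiring care: the index in $\iota(a) = [a\otimes 1, 1]$ sits at stage $1$, not stage $0$, so the construction of $v$ must be normalized so that $v([a,n]) = \Phi_C^{-(n-1)}(u_1'(a))$ — i.e., stage $1$ maps by $u_1'$ itself, stage $0$ (if one includes it) by $\Phi_C\circ u_1'$, stage $n$ by $\Phi_C^{1-n}\circ u_1'$. With this normalization $u\circ\iota(a) = v([a\otimes 1,1]) = u_1'(a\otimes 1) = u_1(a)$, as required, and one still has $\Phi_C\circ v = v\circ\Phi$ because $v(\Phi([a,n])) = v([\Phi(a),n]) = v([a,n+1]) = \Phi_C^{-n}(u_1'(a)) = \Phi_C(\Phi_C^{1-n}(u_1'(a))) = \Phi_C(v([a,n]))$. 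The well-definedness of $v$ on equivalence classes $(a,n)\sim(\Phi(a),n+1)$ uses precisely the intertwining $u_1'\circ\Phi = \Phi_C\circ u_1'$ together with invertibility of $\Phi_C$. For uniqueness: any ${\mathbb K}^\circ$-algebra map $u':P^\circ(A,\phi)\ra C$ with $u'\circ\iota = u_1$ and $\Phi_C\circ u' = u_1\circ\Phi$ — reading the second condition correctly as $\Phi_C\circ u' = u'\circ\Phi$ on the nose after noting $u'\circ\iota = u_1$ pins down $u'$ on $\iota(A)$ — is determined on each $[a,n] = \Phi^{n-1}(\iota(a))$ inside ${\mathcal B}'$ by $u'([a,n]) = u'(\Phi^{n-1}\iota(a)) = \Phi_C^{n-1}(u'(\iota(a))) = \Phi_C^{n-1}(u_1(a))$; since $\Phi_C$ is invertible this equals $\Phi_C^{-(n-1)}$ applied appropriately only if we interpret consistently — in any case $u'$ agrees with $u$ on the dense subring ${\mathcal B}'$, hence on all of $\widehat{{\mathcal B}'}$ by $p$-adic continuity. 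I would also remark that the alternative ${\mathcal B}$-description gives the same $u$ since $\widehat{\mathcal B} = \widehat{{\mathcal B}'}$ by Theorem \ref{r1}. The only genuine obstacle is bookkeeping the index shift so that the two displayed identities come out with the stated (rather than twisted) normalization; everything else is a routine application of three successive universal properties.
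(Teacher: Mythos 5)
Your proposal is correct and follows essentially the same route as the paper: existence is obtained from the fact that $\Phi_C$ is bijective, so $C=\varinjlim_{\Phi_C}C$ (your compatible system $\Phi_C^{1-n}\circ u_1'$ out of the colimit, followed by $p$-adic completion), and uniqueness by reducing to elements $[a\otimes 1,i]$ of ${\mathcal B}'$ and applying a power of $\Phi_C$ together with its injectivity, then concluding by density. The only blemish is that two of your index shifts point the wrong way --- since $(a,n-1)\sim(\Phi(a),n)$ one has $\Phi([a,n])=[\Phi(a),n]=[a,n-1]$, hence $[a\otimes 1,n]=\Phi^{-(n-1)}(\iota(a))$ rather than $\Phi^{n-1}(\iota(a))$, and $[\Phi(a),n]=[a,n-1]$ rather than $[a,n+1]$ --- but you flag the normalization issue yourself, the corrected bookkeeping makes your verifications go through exactly as intended, and the paper's own proof is the same argument.
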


\begin{proof} The existence of $u$ is an easy exercise using the fact, since $\Phi_C$ is bijective, we have 
$C=\varinjlim_{\Phi_C} C.$
To prove the uniqueness of $u$ 
assume 
$$u,v:P^\circ(A,\phi)\ra C$$
are two ${\mathbb K}^\circ$-algebra homomorphisms that commute with the actions of $\Phi$ and $\Phi_C$ and such that $u\circ \iota=v\circ \iota$. 
  It is enough to show that $u$ and $v$ coincide on ${\mathcal B}'$.

Set $\alpha\in A\otimes_R{\mathbb K}^\circ.$ In order to show that $u([\alpha,i])=v([\alpha,i])$
we may assume $\alpha=a\otimes 1$ with $a\in A$. Since $\Phi_C$ is injective it is enough to show that $$\Phi_C^i(u([a\otimes 1,i]))=\Phi_C^i(v([a \otimes 1,i])).$$ But
$$\Phi_C^i(u([a\otimes 1,i]))=u(\Phi^i([a \otimes 1,i]))=u([a\otimes 1,1])=u(\iota(a)),$$
and a similar equality  holds for $v$ which gives the claim. 
\end{proof}

\subsection{The functor $J^{\infty}$}

We recall now the fundamental construction concerning $p$-jets, for more see \cite{char,pjets,equations}. These are certain functors $B\mapsto J^n(B)$ from the category of finitely generated $R$-algebras $B$ to the category of $p$-adically complete $R$-algebras as follows. For rings of finite type over $R$, these are explicit. When $B = R[x]$ where $x$ is a finite tuple of variables, we obtain the free $\delta$-ring $R[x,x',\ldots]$ on the set of variables $x$ which has its $p$-derivation given by 
$$\d:R[x,x',...]\ra R[x,x',...],\ \ \d(F)=\frac{\phi(F)-F^p}{p}$$
where
$$\phi:R[x,x',...]\ra R[x,x',...]$$
is the unique extension of the Frobenius lift on $R$ that sends
$$x\mapsto x^p+px',\ \ x'\mapsto (x')^p+px'',...,$$
we refer to this $\d$ as the {\it universal $p$-derivation}.

Now when $B=R[x]/(f)$, $x$ and $f$ finite tuples, we set
$$J^n(B):=(R[x,x',...,x^{(n)}]/(f,\d f,...\d^n f))^{\widehat{\ }}.$$ Note even when 
$B$ is  of finite type flat over $R$ the rings $J^n(B)$ are not necessarily $p$-torsion free, e.g. take $B=R[x]/(x^p)$.

It is a critical calculation, but easy to see for $s\in B$ there are functorial isomorphism $$J^n(B_s)^{\widehat{\ }} \simeq ((J^n(B))_s)^{\widehat{\ }}.$$ This means that one can glue to construct functors $$X\mapsto J^n(X)$$ from the category of schemes $X$ of finite type over $R$ to the category of $p$-adic formal schemes as follows. For any such scheme $X$  we take an affine open cover $$X=\bigcup_i \Spec  B_i$$
and we define $$J^n(X):=\bigcup_i \Spf J^n(B_i)$$
where $\Spf J^n(B_i)$ are glued naturally, using the appropriate universality property of this construction; cf. \cite[Sec. 2.1, pg. 358]{pjets} or \cite[Cor. 3.4]{equations} for more details. We refer to these as the {\it $p$-jet space of $X$}. 

\

\begin{remark} \label{d2} We record a few important facts about the functors $J^n$ which are easy to verify. 
\begin{enumerate}
\item The functors $J^n$ send closed immersions into closed immersions. 
\item By functoriality, if $G$ is a group scheme then $J^n(G)$ are naturally groups in the category of $p$-adic formal schemes. 
\item If $X$ is affine and possesses an \'{e}tale map $$X\ra {\mathbb A}^d=\Spec R[T]$$ where $T$ is a $d$-tuple of variables then, viewing $T$ as a tuple of elements of $\cO(X)$, we have a natural identification $$\cO(J^n(X))= \cO(\widehat{X})[T',...,T^{(n)}]^{\widehat{\ }},$$ where $T',...,T^{(n)}$ are $d$-tuples of indeterminates and $\d T=T'$, $\d T'=T''$, etc. In particular, if $X$ is any smooth scheme of finite type over $R$ of relative dimension $d$ then $J^{n+1}(X)\ra J^n(X)$ are locally trivial fibrations, in the Zariski topology, with fiber $\widehat{{\mathbb A}^d}$. In particular, the reduction modulo $p$ of $J^n(X)$ is reduced and hence $\phi$ is injective on $J^{\infty}(B)$ for $X=\text{Spec}\ B$. See \cite[Prop. 3.17]{equations} for more details. 
\end{enumerate}
\end{remark}

Set $J^\infty(B) = \varinjlim_n J^n(B)$. This has a naturally defined $p$-derivation $\d^{J^{\infty}(B)}$ coming from the universal $p$-derivation. Indeed $(J^{\infty}(B),\d^{J^{\infty}(B)})$ has the following universality property: for any pair $(C,\d^C)$ consisting of a ring $C$ and a $p$-derivation $\d^C$ on $C$ and for any ring homomorphism $u:B\ra C$ there is a unique 
extension $\tilde{u}:J^{\infty}(B)\ra C$ of $u$ such that $\d^C\circ \tilde{u}=\tilde{u}\circ \d^{J^{\infty}(B)}$.

When $B$ is smooth finitely generated over $R$, then by
 the structure of $\widehat{{\mathbb A}^d}$-fibration mentioned above one has natural inclusions
$$\widehat{B}=B_0\subset B_1\subset B_2\subset ...$$ 
whose reductions modulo $p$ are injective. Also each $B_n$ is $p$-adically separated and $p$-torsion free and $B_n/pB_n$ are smooth over $R/pR$, in particular they are reduced. 

In this case, $J^{\infty}(B)$ is then $p$-torsion free, $p$-adically complete,
and 
$$J^{\infty}(B)/pJ^{\infty}(B)=\varinjlim B_n/pB_n$$
is a reduced ring. If $X$ is a smooth, not necessarily affine, scheme over $R$ and if $X=\bigcup \Spec B_i$
is an affine open cover then, by \cite[Thm. 2.10]{pjets}, the proalgebraic $k$-scheme
$$\bigcup \Spec (J^{\infty}(B_i)/pJ^{\infty}(B_i))$$
is naturally isomorphic to the {\it Greenberg transform} $\text{Green}(X)$ of $X$, in the sense of \cite{greenberg}. To summarize we have the following. 

\begin{theorem}\label{r2}\cite{pjets}
The rule $B\ra (J^{\infty}(B),\phi)$ defines a functor from the category of smooth finitely generated $R$-algebras $B$ to the category of $p$-torsion free, $p$-adically complete, $R$-algebras, with reduced reduction modulo $p$, and equipped with a Frobenius lift. 
Moreover if $X=\bigcup \Spec \ B_i$ then $\bigcup \Spec J^{\infty}(B_i)/pJ^{\infty}(B_i)$ is isomorphic to the Greenberg transform $\text{Green}(X)$ of $X$.\end{theorem}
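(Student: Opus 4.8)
The plan is to verify the three functoriality claims in turn, each reducing to facts already recorded in the excerpt. First, for a fixed smooth finitely generated $R$-algebra $B$, I would check that the object $(J^{\infty}(B),\phi)$ lies in the target category. The $p$-torsion freeness, $p$-adic completeness, and reducedness of $J^{\infty}(B)/pJ^{\infty}(B)$ all follow directly from the paragraph preceding the statement: by Remark \ref{d2}(3) the truncations $B_n/pB_n$ are smooth over $R/pR$ (hence reduced), the maps $B_n \hookrightarrow B_{n+1}$ are injective with injective reductions mod $p$, so $J^{\infty}(B)/pJ^{\infty}(B)=\varinjlim B_n/pB_n$ is a filtered colimit of reduced rings, hence reduced; and $J^{\infty}(B)$ is $p$-adically complete by construction (it is defined as the $p$-adic completion of $\varinjlim_n \cO(J^n(X))$ in the introduction, or equivalently one notes the colimit of $p$-torsion free $p$-adically separated rings is $p$-adically separated and then completes). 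The Frobenius lift $\phi$ on $J^{\infty}(B)$ is the one induced from the universal $p$-derivation $\d$ via $\phi(a)=a^p+p\,\d a$; that its reduction mod $p$ is the $p$-power map is immediate from this formula.

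Second, I would check functoriality of $B\mapsto (J^{\infty}(B),\phi)$ in $B$. Given an $R$-algebra homomorphism $B\ra B'$ of smooth finitely generated $R$-algebras, composing with the structure map $B'\ra J^{\infty}(B')$ yields a homomorphism $B\ra J^{\infty}(B')$ into a target carrying a $p$-derivation, so the universality property of $(J^{\infty}(B),\d^{J^{\infty}(B)})$ recalled in the excerpt produces a unique extension $J^{\infty}(B)\ra J^{\infty}(B')$ commuting with the $p$-derivations, hence with the attached Frobenius lifts. Uniqueness in the universality property gives that this respects composition and identities, so we have a functor. (The identity $J^{n}(B_s)^{\widehat{\ }}\simeq (J^{n}(B)_s)^{\widehat{\ }}$ quoted before the statement is what guarantees these constructions are well behaved, but for the functoriality claim on finitely generated algebras it is the universality property that does the work directly.)

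Third, for the Greenberg transform claim I would simply invoke \cite[Thm. 2.10]{pjets}, exactly as the sentence preceding the statement already does: for $X=\bigcup \Spec B_i$ a smooth $R$-scheme, the proalgebraic $k$-scheme $\bigcup \Spec(J^{\infty}(B_i)/pJ^{\infty}(B_i))$ is naturally isomorphic to $\mathrm{Green}(X)$, these gluings being compatible because of the localization compatibility $J^{\infty}(B_s)^{\widehat{\ }}\simeq (J^{\infty}(B)_s)^{\widehat{\ }}$ mentioned above, whose reduction mod $p$ gives $(J^{\infty}(B)/pJ^{\infty}(B))_{\bar s}\simeq J^{\infty}(B_s)/pJ^{\infty}(B_s)$. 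The only genuinely non-formal input here is that cited theorem from \cite{pjets}; everything else is bookkeeping. The main obstacle, such as it is, is the verification that $J^{\infty}(B)$ is $p$-adically complete rather than merely $p$-adically separated — one must be a little careful that the completion is taken at the end and that the colimit of the $p$-adically complete $J^{n}(B)$ need not itself be complete — but this is handled precisely by the definition of $J^{\infty}(B)$ as a $p$-adic completion given in the introduction, so no real difficulty arises.
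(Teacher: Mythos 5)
Your proposal is correct and follows essentially the same route as the paper, which gives no separate proof of this theorem: it is a summary of the paragraph immediately preceding it ($p$-torsion freeness, completeness, and reducedness mod $p$ coming from the $\widehat{{\mathbb A}^d}$-fibration structure of the truncated jet spaces recorded in Remark \ref{d2}, functoriality from the universal property of $J^{\infty}$, and the Greenberg transform claim cited directly from \cite[Thm. 2.10]{pjets}). Your parenthetical caution about $p$-adic completeness is well placed --- the body's formula $J^{\infty}(B)=\varinjlim_n J^n(B)$ must indeed be read together with the introduction's definition as a $p$-adic completion for the completeness claim to be literally true, since a filtered colimit of complete rings need not be complete --- and you resolve it exactly as the paper intends.
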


We end our discussion by using these $p$-jet spaces to making the following definition, which was mentioned in the introduction. 

\begin{definition}
Let $X$ and $Y$ be two smooth schemes over $R$. A {\it $\d$-morphism} of order $n$ from $X$ to $Y$ is an element of 
$$\Hom(J^n(X),J^0(Y))$$
where $\Hom$ is taken in the category of formal schemes over $R$.
A  {\it $\d$-morphism}  from $X$ to $Y$ is an element of 
$$\Hom_{\d}(X,Y):=\varinjlim_{n} \Hom(J^n(X),J^0(Y)).$$
\end{definition}

Given a $\d$-morphism of order $n$, 
$$f:J^n(X)\ra J^0(Y),$$ and a $\d$-morphism of order $m$, 
$$g:J^m(Y)\ra J^0(Z),$$ one can define a $\d$-morphism of order $m+n$, 
$$J^{n+m}(X)\ra J^0(Z)$$
by composing $g$ with the morphism 
$$J^m(f):J^{n+m}(X)\ra J^m(Y)$$ induced by $f$ via the universality property of $J^{\infty}$. One obtains a composition law for $\d$-morphisms that is trivially seen to be associative.

\subsection{The functor $P^{\infty}$}

Putting together Theorems \ref{r1},  \ref{r2} and  Remarks \ref{d1} and \ref{d2} we get the following theorem. 

\begin{theorem}
\label{ping}

\begin{enumerate}
\item The rule
$$B\mapsto P^{\infty}(B):=(P(J^{\infty}(B),\phi),\Phi)$$
defines a functor from the category smooth finitely generated $R$-algebras $B$
 to the category of perfectoid ${\mathbb K}$-algebras equipped with a relative Frobenius lift. 
 
\item For $P^\circ:=P^\circ(J^{\infty}(B),\phi)$ we have
 $$P^\circ/{\mathbb K}^{\circ \circ}P^\circ\simeq (J^{\infty}(B)/pJ^{\infty}(B))_{\text{perf}}$$.
 
\item For all $s\in B$, 
 $$P^\circ(J^{\infty}(B_s),\phi)\simeq ( 
(
P^\circ(J^{\infty}(B),\phi)
)_s
)^{\widehat{\ }}.$$

\item The functor $P^{\infty}$ sends surjections into surjections.

\item The relative Frobenius $\Phi$ is bijective on $P^\circ(J^{\infty}(B),\phi)$.
\end{enumerate}
\end{theorem}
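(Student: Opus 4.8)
The plan is to deduce Theorem~\ref{ping} by feeding the outputs of Theorem~\ref{r2} into Theorem~\ref{r1}, using the localization compatibilities of Lemma~\ref{d1} and Remark~\ref{d2} to handle the interaction of fractions with the construction. First I would observe that for $B$ smooth finitely generated over $R$, Theorem~\ref{r2} gives that $A:=J^{\infty}(B)$ is $p$-torsion free, $p$-adically complete, has reduced reduction modulo $p$, and carries a canonical Frobenius lift $\phi$ (injective by Remark~\ref{d2}(3)). Thus $A$ lies in the source category of the functor $(A,\phi)\mapsto P(A,\phi)$ of Theorem~\ref{r1}, and assertions (1), (2), (4), (5) of Theorem~\ref{ping} are simply the restriction of assertions (5), (4), (3) of Theorem~\ref{r1} along the functor $B\mapsto (J^{\infty}(B),\phi)$ of Theorem~\ref{r2}: assertion (1) is functoriality composed, assertion (2) is the mod-${\mathbb K}^{\circ\circ}$ computation $\widehat{\mathcal B}/{\mathbb K}^{\circ\circ}\widehat{\mathcal B}\simeq (A/pA)_{\mathrm{perf}}$ with $A/pA=J^{\infty}(B)/pJ^{\infty}(B)$, assertion (4) follows since both $J^{\infty}$ (on surjections, by the presentation $J^n(R[x]/(f))$ and passage to the limit) and $P$ (Theorem~\ref{r1}(6)) preserve surjections, and assertion (5) is exactly Theorem~\ref{r1}(3), the invertibility of $\Phi$.

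Next I would treat assertion (3), the localization compatibility. The point is that Lemma~\ref{d1} gives $P^\circ((A_s)^{\widehat{\ }},\phi)\simeq ((P^\circ(A,\phi))_s)^{\widehat{\ }}$ for any $s\in A$, while the $p$-jet side gives (as recorded just before Remark~\ref{d2}) a functorial isomorphism $J^n(B_s)^{\widehat{\ }}\simeq ((J^n(B))_s)^{\widehat{\ }}$, hence passing to the direct limit in $n$ and then completing, $J^{\infty}(B_s)^{\widehat{\ }}\simeq ((J^{\infty}(B))_s)^{\widehat{\ }}$ compatibly with the Frobenius lifts. Chaining these two isomorphisms with $A=J^{\infty}(B)$ and $s$ the image of an element of $B$ yields $P^\circ(J^{\infty}(B_s),\phi)\simeq (( P^\circ(J^{\infty}(B),\phi))_s)^{\widehat{\ }}$, which is assertion (3). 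I would note that one must check the Frobenius lift on $(A_s)^{\widehat{\ }}$ appearing in Lemma~\ref{d1} matches the canonical $\phi$ on $J^{\infty}(B_s)$ under the identification; this is where the universality property of $J^{\infty}$ (its defining property as the universal $\delta$-ring over $B$) does the work, since both are extensions of the Frobenius lift along $B\to B_s\to (B_s)^{\widehat{\ }}$ compatible with the $p$-derivation.

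The step I expect to be the only genuine obstacle is the precise bookkeeping in assertion (3): verifying that the two independent limit-then-complete operations (direct limit over jet order $n$ followed by $p$-adic completion, versus direct limit along $\Phi$ followed by $p$-adic completion, with an intervening $\otimes_R{\mathbb K}^\circ$) commute with localization at $s$ in a way that is compatible on the nose, not merely up to noncanonical isomorphism. Concretely one needs that localization is exact and commutes with both filtered colimits and with $\otimes_R {\mathbb K}^\circ$, and that $p$-adic completion commutes with localization at $s$ in the relevant completed sense — these are the isomorphisms $((A_s)^{\widehat{\ }}\otimes_R {\mathbb K}^\circ)^{\widehat{\ }}\simeq ({\mathcal A}_s)^{\widehat{\ }}$ and $\varinjlim_{\Phi}(({\mathcal A}_s)^{\widehat{\ }})\simeq ((\varinjlim_{\Phi}\widehat{\mathcal A})_s)^{\widehat{\ }}$ already invoked in the proof of Lemma~\ref{d1}. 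Everything else is a formal assembly: I would simply cite Theorems~\ref{r1} and~\ref{r2}, Lemma~\ref{d1}, and Remark~\ref{d2}, and remark that the claimed functor is the composition and that the five listed properties transport term by term.
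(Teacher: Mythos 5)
Your proposal is correct and is exactly the paper's approach: the paper itself gives no proof beyond the single sentence ``Putting together Theorems \ref{r1}, \ref{r2} and Remarks \ref{d1} and \ref{d2} we get the following theorem,'' and your assembly — feeding $J^{\infty}(B)$ from Theorem \ref{r2} into the functor $P$ of Theorem \ref{r1}, and chaining the jet-space localization isomorphism with Lemma \ref{d1} for assertion (3) — is precisely that assembly, carried out with more care than the paper bothers to record.
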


\begin{remark}
In some ways, Theorem~\ref{ping} is related to \cite[Thm. 3.10]{BS18}. We will not utilize any prismatic language, but roughly the construction of $A/I$ in loc. cit as a perfectoid ring is very close to the proof we've provided of Theorem~\ref{ping}. However, our approach has utilized more extensively the language of $p$-jets which helps illuminate the connections to the applications mentioned in the introduction. It could be interesting to translate the other results we utilize into the prismatic setting and explore the ramifications, but these are beyond the scope of this paper, so we proceed without any further remark on these connections. 
\end{remark}

Assume now $X$ is a smooth quasi-projective scheme over $R$ and let $(X_i)_{i\in I}$ be a principal basis for $X$, $B_i=\cO(X_i)$, and $s_{ij}$ as in Equation~\ref{principalcover}.
In view of assertion 3 in the above Theorem~\ref{ping} plus Equation~\ref{mamama}, the perfectoid spaces $$\Spa(P(J^{\infty}(B_i),\phi), P^\circ(J^{\infty}(B_i),\phi))$$
glue together to yield a perfectoid space $P^{\infty}(X)$ equipped with a principal cover. Note however, it is not generally a basis for the topology of that space. 
If $X$ is affine, $X=\Spec B$, and $X_i$ are principal in $X$ then 
$$P^{\infty}(X)\simeq\Spa(P(J^{\infty}(B),\phi), P^\circ(J^{\infty}(B),\phi)).$$
Going back to an arbitrary smooth quasi-projective $X$, if $Y$ is another smooth quasi-projective scheme over $R$ and $(Y_j)_{j\in J}$ is  a principal basis for $Y$ then for any morphism of schemes $X\ra Y$, which we do not assume to be compatible with the principal 
bases, there is an induced morphism $P^{\infty}(X)\ra P^{\infty}(Y)$.
We obtain the following consequence.

\begin{corollary}
\label{pong}
There is  a  functor
$$P^{\infty}:{\mathcal S}\ra {\mathcal P}_{\Phi},\ \ \ X\mapsto P^{\infty}(X)$$
 from the category ${\mathcal S}$ of  quasi-projective smooth schemes over $R$ to the category ${\mathcal P}_{\Phi}$ of perfectoid spaces over ${\mathbb K}$ equipped with a principal cover and with a relative Frobenius lift having the following properties.

 \begin{enumerate}
 
\item The functor $P^{\infty}$ sends closed immersions into closed immersions.

\item For any $X$ in ${\mathcal S}$ the scheme 
 $P^{\infty}(X)_k$, i.e., reduction 
 modulo ${\mathbb K}^{\circ \circ}$ of $P^{\infty}(X)$ with respect to its principal cover, is isomorphic to the perfection $G(X)_{\text{perf}}$ of the Greenberg transform $\text{Green}(X)$ of $X$. 
 
 \item For any $X$ in ${\mathcal S}$ the relative Frobenius lift $\Phi$ on $P^{\infty}(X)$ is invertible.

\end{enumerate}

\end{corollary}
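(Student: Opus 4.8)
The plan is to assemble Corollary~\ref{pong} from the pieces already in hand, treating it as a routine gluing/bookkeeping argument rather than proving anything genuinely new. The statement has three parts, and all three reduce to the affine case already established in Theorem~\ref{ r1}, Theorem~\ref{r2}, and Theorem~\ref{ping}; the only real content is checking that the local constructions are compatible with the transition maps of a principal basis.

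First, I would construct the functor itself. Fix a smooth quasi-projective $X$ over $R$ and a principal basis $(X_i)_{i\in I}$ with $X_i=\Spec B_i$ and transition data $s_{ij}\in B_i$. By Theorem~\ref{ping}(1), each $B_i$ yields a perfectoid ${\mathbb K}$-algebra $P^{\infty}(B_i)=(P(J^{\infty}(B_i),\phi),\Phi)$ with a relative Frobenius lift. By Theorem~\ref{ping}(3) together with the identification Equation~\ref{mamama}, we get canonical isomorphisms of the overlap perfectoid spaces
$$
U(1/\iota(s_{ij}))\simeq \Spa\bigl((P^\circ(J^{\infty}(B_i),\phi))_{s_{ij}})^{\widehat{\ }}[1/p], ((P^\circ(J^{\infty}(B_i),\phi))_{s_{ij}})^{\widehat{\ }}\bigr)\simeq U(1/\iota(s_{ji})),
$$
and these satisfy the cocycle condition because the underlying scheme-theoretic identifications on $X_i\cap X_j\cap X_l$ do. Gluing produces a perfectoid space $P^{\infty}(X)$ in ${\mathcal P}_{\Phi}$ equipped with the induced principal cover and the glued relative Frobenius lift. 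Functoriality: given a morphism $X\ra Y$ of smooth quasi-projective schemes, refine both sides by the canonical refinement lemma so that the morphism carries members of the cover of $X$ into members (or unions of members) of the cover of $Y$, then use functoriality of $J^{\infty}$ (Theorem~\ref{r2}), of $P$ (Theorem~\ref{r1}(5)), and of the gluing to obtain $P^{\infty}(X)\ra P^{\infty}(Y)$ compatibly with principal covers and relative Frobenius lifts; independence of the chosen refinement follows from the universality property in Theorem~\ref{uniuni}.

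For part (1), closed immersions: locally a closed immersion $X'\hookrightarrow X$ is $\Spec B/\mathfrak{a}\hookrightarrow\Spec B$, and by Remark~\ref{d2}(1) the functor $J^{\infty}$ sends it to a surjection $J^{\infty}(B)\twoheadrightarrow J^{\infty}(B/\mathfrak a)$; then Theorem~\ref{ping}(4) (equivalently Theorem~\ref{r1}(6)) says $P^{\infty}$ sends this to a surjection $P^\circ(J^{\infty}(B),\phi)\twoheadrightarrow P^\circ(J^{\infty}(B/\mathfrak a),\phi)$ of perfectoid ${\mathbb K}^\circ$-algebras, which is exactly the local form of a closed immersion; this is stable under the gluing. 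For part (2), the reduction modulo ${\mathbb K}^{\circ\circ}$: by construction $P^{\infty}(X)_k$ is glued from the affine schemes $\Spec\bigl(P^\circ(J^{\infty}(B_i),\phi)/{\mathbb K}^{\circ\circ}P^\circ(J^{\infty}(B_i),\phi)\bigr)$, which by Theorem~\ref{ping}(2) equal $\Spec\bigl((J^{\infty}(B_i)/pJ^{\infty}(B_i))_{\text{perf}}\bigr)$; since perfection commutes with localization and with gluing, and since $\bigcup\Spec J^{\infty}(B_i)/pJ^{\infty}(B_i)$ is the Greenberg transform $\text{Green}(X)$ by Theorem~\ref{r2}, we get $P^{\infty}(X)_k\simeq \text{Green}(X)_{\text{perf}}$. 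For part (3), invertibility of $\Phi$: this is local and is precisely Theorem~\ref{ping}(5) (equivalently Theorem~\ref{r1}(3)), and invertibility is preserved under gluing since the overlap algebras are localizations-then-completions of the $P^\circ(J^{\infty}(B_i),\phi)$ and $\Phi$ is compatible with these operations by Lemma~\ref{d1}.

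I expect the main obstacle to be purely organizational: making the gluing genuinely functorial, i.e. checking that the induced morphism $P^{\infty}(X)\ra P^{\infty}(Y)$ does not depend on the auxiliary common refinement of principal bases needed to make a scheme morphism ``principal-cover compatible'', and that composition is respected. This is where one must lean on the canonical refinement lemma together with the universality in Theorem~\ref{uniuni} to identify the various glued objects canonically; none of it is deep, but it is the only place where care is required, and it is the reason the statement is phrased as a corollary of the already-proved structural theorems rather than proved from scratch.
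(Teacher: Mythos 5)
Your proposal is correct and follows essentially the same route as the paper: the paper likewise builds $P^{\infty}(X)$ by gluing the affinoid pieces $\Spa(P(J^{\infty}(B_i),\phi),P^\circ(J^{\infty}(B_i),\phi))$ over a principal basis using Theorem~\ref{ping}(3) and Equation~\ref{mamama}, obtains functoriality from the basis property, and reads off properties (1)--(3) from assertions (4), (2), and (5) of Theorem~\ref{ping} together with Theorem~\ref{r2}. The only cosmetic difference is that the paper uses a fixed principal \emph{basis} (so arbitrary morphisms are automatically cover-compatible) and defers the independence-of-choices issue to a remark about canonically isomorphic functors, whereas you invoke the canonical refinement lemma and Theorem~\ref{uniuni} for the same purpose.
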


\begin{remark}
The functor $P^{\infty}$ in mentioned in introduction. The construction required us to arbitrarily attach to each quasi-projective smooth scheme $X$ some principal basis
$(X_i)_{i\in I}$ and define $P^{\infty}$ using this choice of principal bases. Thus $P^{\infty}$ depends on the choice of the principal bases. However,  if $\tilde{P}^{\infty}$ is the functor corresponding to a different choice of principal bases then $P^{\infty}$ and $\tilde{P}^{\infty}$ are canonically isomorphic as functors.
\end{remark}

\begin{remark}\label{extensionn}
Let ${\mathcal S}_{\d}$ be the category whose objects are the smooth quasi-projective schemes over $R$ and whose morphisms are the $\d$-morphisms. We claim that the functor
$P^{\infty}$ in Corollary \ref{pong} naturally extends to a functor, still denoted by, 
$$P^{\infty}:{\mathcal S}_{\d}\ra {\mathcal P}_{\Phi}.$$
This plus our above discussion proves Theorem \ref{soare} in the introduction.
To check our claim,
 let $X$ be a smooth scheme with a principal basis $(X_i)$, $X_i=\text{Spec}\ B_i$. 
In view of 
Remark \ref{d1} and Equation \ref{mamama}
the perfectoid spaces 
$$P_{X_i,f}:=\Spa(P((J^{\infty}(B_i)_f)^{\widehat{\ }},\phi), P^\circ((J^{\infty}(B_i)_f)^{\widehat{\ }},\phi)),$$
where for each $i$, $f$ runs through the elements of the ring $J^{\infty}(B_i)$,
glue together to yield a perfectoid space which we temporarily denote $P^{\infty}_*(X)$. 
It is easy to see that there is a canonical isomorphism $P^{\infty}(X)\simeq P^{\infty}_*(X)$ for each object $X$ so the interest here is to show 
how $\delta$-morphisms induce maps via functoriality. This is easier done using the $P^{\infty}_*(X)$ description. Indeed for any $\d$-morphism $J^n(X)\ra J^0(Y)$  one has induced 
morphisms
$$\cO(J^0(Y_j))_g\ra \cO(J^n(X_i))_f$$
for various appropriate $f,g,i,j$ and hence, by the universality property of $p$-jet spaces, morphisms
$$\varinjlim{_m} \cO(J^m(Y_j))_g\ra \varinjlim_m
\cO(J^{n+m}(X_i))_f=\varinjlim_m \cO(J^m(X_i))_f$$
inducing morphisms
$$P_{X_i,f}\ra P_{Y_j,g}.$$
The latter which glue together to give a morphism $P^{\infty}_*(X)\ra P^{\infty}_*(Y)$, hence a morphism $P^{\infty}(X)\ra P^{\infty}(Y)$.

Note that the covering
$$(\Spec J^{\infty}(B_i)/pJ^{\infty}(B_i))_i$$
of the Greenberg transform of $X$ is a principal covering but, of course, not a principal basis. 
However, the canonical refinement of the above covering,
$$(\Spec J^{\infty}(B_i)_f/pJ^{\infty}(B_i)_f)_{i,f},$$
is a principal basis.  
\end{remark}

\section{The case of  ${\mathbb G}_m$}

 We illustrate the theory on the example $X = {\mathbb G}_m$. Of particular note is that by functoriality the multiplication by $p$ map on $\mathbb{G}_m$, which we denote $[p]$ induces a map $P^{\infty}([p])$ on $P^{\infty}({\mathbb G}_m)$ which we explore in detail. Let 
  $${\mathbb G}_m=\Spec R[x,x^{-1}],$$
   with $x$ a variable,  be the multiplicative group scheme over $R$, and let
 $$[p]:{\mathbb G}_m\ra {\mathbb G}_m$$ be the isogeny induced by the $R$-algebra homomorphism, still denoted by
 $$[p]:R[x,x^{-1}]\ra R[x,x^{-1}],$$
  defined by $[p](x)=x^p$. We next analyze the induced morphism 
  $$P^{\infty}([p]):P^{\infty}({\mathbb G}_m)\ra P^{\infty}({\mathbb G}_m),$$
   equivalently the induced morphism of Banach ${\mathbb K}$-algebras 
 \begin{equation}
 \label{barbecue}
 P^{\infty}([p]):P^{\infty}(R[x,x^{-1}])\ra P^{\infty}(R[x,x^{-1}]).\end{equation}

\begin{thm}\label{poi}
The morphism \ref{barbecue}
 is surjective but not injective.
\end{thm}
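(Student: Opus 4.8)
The plan is to analyze the map \ref{barbecue} directly on the level of $p$-jet algebras and then track how $P$ transforms it. First I would compute $J^{\infty}([p])$ explicitly. Since $J^{\infty}(R[x,x^{-1}])$ is the $p$-adic completion of the $\delta$-ring $R[x,x^{-1},x',x'',\dots]$, the homomorphism $[p]\colon R[x,x^{-1}]\to R[x,x^{-1}]$ with $x\mapsto x^p$ extends uniquely to a $\delta$-ring homomorphism by the universality property of $J^{\infty}$; concretely $x'\mapsto \delta(x^p)=\tfrac{\phi(x)^p-x^{p^2}}{p}=\tfrac{(x^p+px')^p-x^{p^2}}{p}$, and so on. The key structural fact is that the induced map on reductions modulo $p$, namely $\overline{J^{\infty}([p])}\colon \mathrm{Green}(\mathbb{G}_m)\to\mathrm{Green}(\mathbb{G}_m)$, sends $\overline{x}\mapsto\overline{x}^p$ and $\overline{x'}\mapsto (\overline{x'})^p$, $\overline{x''}\mapsto(\overline{x''})^p$, etc. (all the higher coordinates behave like Frobenius on $\overline{x'},\overline{x''},\dots$, because $\delta^n(x^p)$ reduces mod $p$ to $(\delta^{n-1}x\text{-type terms})^p$). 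In other words, up to the identification $\mathrm{Green}(\mathbb{G}_m)_k \cong \overline{R[x,x^{-1}]}[\overline{x'},\overline{x''},\dots]$, the reduction of $J^{\infty}([p])$ is precisely the absolute Frobenius composed with the endomorphism induced by $[p]$ on $\mathbb{G}_m\otimes k$.

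Next I would invoke the structural results already available: by Theorem \ref{ping}(2) we have $P^{\circ}/\mathbb{K}^{\circ\circ}P^{\circ}\simeq (J^{\infty}(R[x,x^{-1}])/p)_{\mathrm{perf}}$, and by the construction the relative Frobenius $\Phi$ is invertible, so $P^{\circ}$ is $p$-adically complete, $p$-torsion free, totally integrally closed in $P^{\circ}[1/p]$ with surjective Frobenius mod $p$. To prove surjectivity of \ref{barbecue} it suffices, by Lemma \ref{l1}(2), to prove surjectivity of $P^{\circ}([p])$ modulo $\mathbb{K}^{\circ\circ}$ (or modulo $p$ — one must be slightly careful which, but since the source and target are perfectoid $\mathbb{K}^{\circ}$-algebras and $\mathbb{K}^{\circ\circ}$ is generated by the $p$-power roots of $p$, surjectivity mod $\mathbb{K}^{\circ\circ}$ plus $p$-adic completeness and topological nilpotence of $\mathbb{K}^{\circ\circ}$ gives surjectivity). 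Modulo $\mathbb{K}^{\circ\circ}$ the map becomes the perfection of $\overline{J^{\infty}([p])}$, i.e. the perfection of ``Frobenius composed with $[p]\otimes k$''. On a perfect ring Frobenius is an automorphism, so the map is surjective precisely when the perfection of $[p]\otimes k\colon\mathbb{G}_m\otimes k\to\mathbb{G}_m\otimes k$ is surjective on rings; since $[p]\otimes k$ is $x\mapsto x^p$, which on $\overline{R[x,x^{-1}]}$ is again a Frobenius-type map, its perfection is an isomorphism. Hence $P^{\circ}([p])$ is surjective mod $\mathbb{K}^{\circ\circ}$, and therefore \ref{barbecue} is surjective (indeed $P^{\infty}([p])$ is a closed immersion of perfectoid spaces).

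For non-injectivity I would argue that $P^{\circ}([p])$, equivalently its reduction modulo $\mathbb{K}^{\circ\circ}$, is not injective, and then lift non-injectivity back to the $\mathbb{K}$-algebra level. The reduced map is the perfection of $\overline{[p]}$ on $\mathrm{Green}(\mathbb{G}_m)$ (after absorbing the harmless Frobenius automorphism); but $[p]\otimes k\colon\mathbb{G}_m\otimes k\to\mathbb{G}_m\otimes k$ is the $p$-power isogeny whose kernel $\mu_p$ is a non-reduced infinitesimal group scheme, so $\overline{[p]}$ is \emph{not} injective on rings — explicitly, the image of $\overline{x}-1$ (or of any suitable function vanishing on $\mu_p$) lies in the kernel after passing to functions, or more cleanly: $\overline{[p]}$ factors as Frobenius followed by an étale map only away from the infinitesimal kernel, and the failure of injectivity persists under perfection because perfection of a non-injective map of reduced $k$-algebras along absolute Frobenius can still fail to be injective when the original map is not injective on the reduced quotients — here the cleanest route is to exhibit an explicit nonzero element of the kernel. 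Concretely, over $k$ the Verschiebung/kernel considerations show $J^{\infty}([p])$ is not injective mod $p$ because $\delta$-prolongation of the isogeny $[p]$ retains a nontrivial kernel coming from the nontrivial kernel of $[p]$ on $\mathbb{G}_m$; passing to the perfection and then to $P(-)[1/p]$ preserves nonzero kernel elements since $P^{\circ}$ is $p$-torsion free and $P(-)=P^{\circ}[1/p]$. Thus \ref{barbecue} is not injective.

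The main obstacle I expect is the non-injectivity argument: one must be careful because taking perfections can kill non-injectivity (e.g. the perfection of the Frobenius itself is an isomorphism), so the heart of the matter is to verify that the kernel of $\overline{J^{\infty}([p])}$ is \emph{not} contained in the ``nil-Frobenius'' part that perfection trivializes — equivalently, to show that $[p]\colon\mathbb{G}_m\to\mathbb{G}_m$ induces a non-injective map on the perfection of the Greenberg transforms. This should follow from the fact that $[p]\otimes k$ on $\mathbb{G}_m\otimes k$ is not a bijection on $k$-points (it is the $p$-power map on $k^{\times}$, which over the algebraically closed field $k$ of characteristic $p$ is a bijection on points but is \emph{inseparable}), so it is genuinely not an isomorphism of schemes; its perfection is still not an isomorphism of perfect schemes because the generic behavior (degree $p$) is unchanged — one can detect this via the induced map on cotangent complexes or simply by a dimension/transcendence-degree-free argument using that $\overline{R[x,x^{-1}]}\to\overline{R[x,x^{-1}]}$, $x\mapsto x^p$, is injective but not surjective, hence its perfection is injective, and therefore $P^{\circ}([p])$ is injective mod $\mathbb{K}^{\circ\circ}$ after all — which would contradict the above. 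This tension signals that the correct statement of non-injectivity must be proved at the $p$-jet level before perfection and before killing $\mathbb{K}^{\circ\circ}$: the point is that $J^{\infty}([p])$ fails to be injective \emph{integrally} (not just mod $p$), because $[p]$ on $\mathbb{G}_m$ is not a monomorphism of $\delta$-schemes, and one exhibits this via an explicit relation among the $\delta$-coordinates, e.g. using that $x^p$ and its prolongations satisfy an extra $\delta$-polynomial relation that $x$ and its prolongations do not. Making that explicit relation precise is the technical core of the argument.
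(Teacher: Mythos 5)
Your proposal rests on a ``key structural fact'' that is false, and this error propagates through both halves of the argument. You claim that modulo $p$ the map $J^{\infty}([p])$ sends $\overline{x^{(n)}}\mapsto(\overline{x^{(n)}})^p$ for all $n$, so that the reduction is ``Frobenius composed with $[p]\otimes k$'' and its perfection is an isomorphism. In fact $x'\mapsto \delta(x^p)=\frac{(x^p+px')^p-x^{p^2}}{p}=px^{p(p-1)}x'+\cdots+p^{p-1}(x')^p\equiv 0\pmod p$, and more generally (this is the content of the lemma the paper imports from the reference on $p$-jets of finite algebras) $\delta^n(x^p)\equiv x^{p^n(p-1)}(x^{(n-1)})^p+v_{n-2}\pmod p$ for $n\geq 2$: there is an index shift from $x^{(n)}$ to $x^{(n-1)}$, not a Frobenius on each coordinate. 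This shift is simultaneously the engine of surjectivity and the source of non-injectivity. For surjectivity one solves, modulo $p$ and lower-order terms, for $(x^{(n)})^p$ in terms of the image of $x^{(n+1)}$, and then uses the identification $[x^{(n)},i]=[\phi(x^{(n)}),i+1]\equiv[(x^{(n)})^p,i+1]$ in the direct limit along $\Phi$ to recover $x^{(n)}$ itself by induction on $n$; your route of reducing to the perfection of a (wrongly computed) Frobenius-type map never engages with this and, as stated, would actually prove the reduction is injective. For non-injectivity the correct observation is immediate from the shift: the class of $x'$ maps to $\delta(x^p)\equiv 0\pmod p$, so $[x',i]$ is a nonzero element of $P^\circ/{\mathbb K}^{\circ\circ}P^\circ$ killed by the reduction of $P^\infty([p])$; combined with surjectivity this rules out injectivity integrally, hence after inverting $p$.

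You do detect the inconsistency yourself in the final paragraph --- noting that your reasoning leads both to ``the reduction is not injective'' and to ``the reduction is injective after all'' --- but you resolve it only by deferring to an unspecified ``explicit relation among the $\delta$-coordinates'' whose verification you acknowledge is ``the technical core of the argument.'' That explicit relation is precisely the congruence $\delta^n(x^p)\equiv x^{p^n(p-1)}(x^{(n-1)})^p+v_{n-2}\pmod p$ above, so the core of the proof is missing. A secondary, smaller issue: your reduction of surjectivity to surjectivity modulo ${\mathbb K}^{\circ\circ}$ is left vague (``one must be slightly careful which''); the clean statement is Lemma \ref{l1}(2) applied modulo $p$, which is what the paper uses.
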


In order to prove our theorem we need some notation. Set $B=R[x,x^{-1}]$, and 
$$B_n:=\cO(J^n(B))=R[x,x^{-1},x',...,x^{(n)}]\widehat{\ }.$$
Hence for
$${\mathcal A}:=J^{\infty}(B)\otimes_R {\mathbb K}^{\circ \circ}=R[x,x^{-1},x',x'',...]^{\widehat{\ }}\otimes_R {\mathbb K}^{\circ \circ},$$
we have $\widehat{\mathcal A}={\mathbb K}^{\circ \circ}[x,x^{-1},x',x'',...]^{\widehat{\ }}$
and $$P^{\infty}(B)^\circ=\widehat{\mathcal B},\ \ \ {\mathcal B}:=\varinjlim_{\Phi} \widehat{\mathcal A},\ \ \ P^{\infty}(B)=P^{\infty}(B)^\circ[1/p].$$
We need to record  the following consequence of \cite[Thm. 1.1]{pfin1}.
Set 
$$y_n:=\d^n(x^p)\in B_n,\ \ \ n\geq 0.$$ For us, the consequence of \cite[Thm 1.1]{pfin1} that we need is the following lemma.

\begin{lemma} There exist elements $v_n, w_n\in B_n$, $n\geq 1$ such that:

1) $y_1=pw_1$.

2) $y_2=x^{p^2(p-1)}(x')^p+pw_2$

2) For $n\geq 3$,  $y_n=x^{p^n(p-1)}(x^{(n-1)})^p+v_{n-2}+pw_n$. 
\end{lemma}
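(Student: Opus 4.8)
The statement is purely a computation inside the free $\delta$-ring $R[x,x^{-1},x',x'',\ldots]$, recording the leading behaviour of the iterated Fermat quotients $y_n = \delta^n(x^p)$. The plan is to proceed by induction on $n$ using the two defining identities for a $p$-derivation: $\delta(ab) = a^p\delta b + b^p \delta a + p\,\delta a\,\delta b$ and $\delta(a+b) = \delta a + \delta b + C_p(a,b)$, together with the relations $\delta(x^{(k)}) = x^{(k+1)}$ and $\phi(x^{(k)}) = (x^{(k)})^p + p x^{(k+1)}$. The base cases $n=1$ and $n=2$ are direct: $\delta(x^p)$ is computed from the product rule applied to $x\cdot x \cdots x$ ($p$ factors), which visibly lies in $pB_1$ because each term of the expansion either has a factor $p$ or is $\delta$ of a unit times stuff — one should check $y_1 = \delta(x^p) = (x^{(1)\, p}/\text{(unit corrections)}) $; more cleanly, $\phi(x)^p - x^{p^2} = (x^p + px')^p - x^{p^2} \in pB_1$, and $y_1 = \delta(x^p) = (\phi(x^p) - x^{p^2})/p = ((x^p+px')^p - x^{p^2})/p$, which is clearly in $B_1$ and, expanding, equals $x^{p(p-1)}(x')^p \cdot (\text{binomial } \binom{p}{1}/p \cdot p^{?})$ — one must track that for $p \mid \binom{p}{j}$ when $1 \le j \le p-1$, so after dividing by $p$ the constant term (the $j=p$ term) contributes $p^{p-1}(x')^p$, which is again divisible by $p$; hence $y_1 \in pB_1$, giving $w_1 := y_1/p$. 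So item 1) holds.

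For $n=2$ and $n\geq 3$ the engine is: $y_n = \delta(y_{n-1})$, so write $y_{n-1} = (\text{leading monomial}) + (\text{lower-order error})$ by the inductive hypothesis and apply $\delta$ distributing over the sum via $C_p$. The key algebraic fact is that $\delta$ applied to a monomial $x^{a}(x^{(k)})^{p}$ produces, modulo $p$ and modulo terms involving only lower-index variables, the monomial $x^{ap}(x^{(k+1)})^{p}$: indeed $\delta(uv) \equiv \phi(u)\delta v + \phi(v)\delta u \pmod{p}$ is false, but $\delta(uv) = u^p \delta v + v^p \delta u + p\delta u \delta v$, and $\delta((x^{(k)})^p)$: one computes $\delta(z^p)$ for $z = x^{(k)}$ exactly as in the $n=1$ case, getting $\delta(z^p) = z^{p(p-1)}(\delta z)^p + p(\cdots) = (x^{(k)})^{p(p-1)}(x^{(k+1)})^p + p(\cdots)$. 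Combining via the product rule and keeping only the top-degree-in-highest-variable term gives the exponent bookkeeping $a \mapsto ap$ on the $x$-power (explaining why $x^{p^2(p-1)}$ for $n=2$ becomes $x^{p^3(p-1)}$ — wait, the statement has $x^{p^n(p-1)}$ for $n\geq 3$, so one must carefully recheck the exponent recursion; this is exactly where \cite[Thm.~1.1]{pfin1} is invoked to pin down the precise exponents), and all the cross terms and the $\delta$ of the error term get absorbed into $v_{n-2}$ (depending only on $x, x', \ldots, x^{(n-2)}$) plus a $p$-divisible remainder $pw_n$.

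The main obstacle is the exponent bookkeeping and, more seriously, verifying that the error term produced by $\delta$(inductive error) genuinely involves only the variables $x^{(0)}, \ldots, x^{(n-2)}$ — i.e., that no $x^{(n-1)}$ sneaks in outside the displayed leading monomial. Tracking this requires knowing that the inductive error at stage $n-1$ involves only $x, \ldots, x^{(n-3)}$ (so its $\delta$ involves only $x, \ldots, x^{(n-2)}$), and that the cross terms $p\,\delta(\text{leading})\,\delta(\text{error})$ and the $C_p$-corrections also stay within that range after dividing out the relevant power of $p$. This is precisely the content that \cite[Thm.~1.1]{pfin1} supplies, so rather than re-deriving it from scratch I would extract the explicit formula for $\delta^n(x^p)$ from that reference, read off the top monomial, and define $v_{n-2}$ and $w_n$ accordingly — the ``proof'' then amounts to matching our normalization ($y_n = \delta^n(x^p)$, the unit $x^{-1}$ being available) to theirs and checking the three displayed congruences term by term.
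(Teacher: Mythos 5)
The paper itself offers no proof of this lemma: it is recorded purely as a consequence of \cite[Thm. 1.1]{pfin1}, so your final fallback (extract the explicit formula from that reference and match normalizations) coincides with what the paper does. Your base case is also fine: $y_1=((x^p+px')^p-x^{p^2})/p=\sum_{j=1}^{p}\binom{p}{j}p^{j-1}x^{p(p-j)}(x')^j$ and every term is divisible by $p$ (the $j=1$ term carries $\binom{p}{1}=p$, the intermediate terms carry $p\cdot p^{j-1}$, and the $j=p$ term carries $p^{p-1}$).

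The direct induction you sketch for $n\geq 2$, however, rests on a false step. Your ``key algebraic fact'' asserts $\delta(z^p)=z^{p(p-1)}(\delta z)^p+p(\cdots)$ for $z=x^{(k)}$. The same expansion as in your base case gives $\delta(z^p)=\sum_{j=1}^{p}\binom{p}{j}p^{j-1}z^{p(p-j)}(\delta z)^j\equiv 0\pmod{p}$ — indeed that vanishing \emph{is} part 1 of the lemma — so your base case and your inductive engine contradict each other. In particular $\delta$ of the leading monomial $x^{p^{n-1}(p-1)}(x^{(n-2)})^p$ of $y_{n-1}$ lies in $pB_{n-1}$ and does \emph{not} produce the leading monomial of $y_n$. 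The leading monomial actually comes from the $p$-divisible part: writing $y_{n-1}=L_{n-1}+v_{n-3}+pw_{n-1}$, one has $\delta(pw_{n-1})=\phi(w_{n-1})-p^{p-1}w_{n-1}^p\equiv w_{n-1}^p\pmod{p}$, and it is the $p$-th power of the term $x^{p^{n-1}(p-1)}x^{(n-1)}$ of $w_{n-1}$ (degree one, not $p$, in the top variable) that yields $x^{p^{n}(p-1)}(x^{(n-1)})^p$. This is already visible at $n=2$: $y_2=\phi(w_1)-p^{p-1}w_1^p$ with $w_1\equiv x^{p(p-1)}x'\pmod{p}$. To close the induction you must therefore strengthen the hypothesis to control $w_{n-1}$ modulo $p$ (namely $w_{n-1}\equiv x^{p^{n-1}(p-1)}x^{(n-1)}+(\text{element of }B_{n-2})\pmod{p}$), which the statement of the lemma — your proposed induction hypothesis — does not record. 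That missing refinement, together with the exponent recursion you yourself flagged as uncertain, is precisely what \cite[Thm. 1.1]{pfin1} supplies; as written, your argument neither reproves it nor correctly reduces to it except by outright citation.
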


\

{\it Proof of Theorem \ref{poi}}. We start by proving surjectivity.

Let $C$ be the image of $P^{\infty}([p]):\widehat{\mathcal B}\ra \widehat{\mathcal B}$. 
Theorem \ref{poi} will be proved if we prove that $C=\widehat{\mathcal B}$. 
By Lemma \ref{l1} it is enough to show that any element in $\widehat{\mathcal B}$ is congruent modulo $p$ to an element of $C$. Recall that the elements of $\widehat{\mathcal B}$ are represented by pairs
$(a,i)\in \widehat{\mathcal A}\times {\mathbb N}$. So it is enough to check that for any $n\geq 0$ and $i\geq 1$ the class 
$$[x^{(n)},i]\in \widehat{\mathcal B}$$
is congruent modulo $p$ in $\widehat{\mathcal B}$ to an element of $C$. We proceed by induction on $n$. Denote by $\equiv$ congruence modulo $p$ in $\widehat{\mathcal B}$.
 Now for any $i\geq 1$ we have
 $$[x,i]= [\phi(x),i+1]\equiv [x^p,i+1]=P^{\infty}([p])[x,i+1]\in C$$ 
 so the case $n=0$ is checked.
 For the induction step assume our assertion is true for all $k$ with  $1\leq k\leq n-1$ for some $n\geq 1$. Set $v_0=0$. By the induction hypothesis 
 $$[x^{-p^{n+1}(p-1)}v_{n-1},i+1]\in C+p\widehat{\mathcal B}.$$ Hence
 $$\begin{array}{rcl}
 [x^{(n)},i] & = &  [\phi(x^{(n)}),i+1]\\
 \ & \ & \ \\
 \ & \equiv  & [(x^{(n)})^p,i+1]\\
 \ & \ & \ \\
 \ & \equiv  & [x^{-p^{n+1}(p-1)}y_{n+1},i+1]-[x^{-p^{n+1}(p-1)}v_{n-1},i+1]\\
 \ & \ & \ \\
 \ & \in  & P^{\infty}([p])[x^{-p^n(p-1)}x^{(n+1)},i+1]+C+p\widehat{\mathcal B}=C+p\widehat{\mathcal B}.\end{array}$$  This ends our induction and our proof of surjectivity. 
 
 To conclude note that the morphism \ref{barbecue} is not an isomorphism. Indeed, the reduction modulo ${\mathbb K}^{\circ \circ}$ of the morphism
\begin{equation}
\label{turnoff}
P^{\infty}([p]):P^{\infty}(R[x,x^{-1}])^\circ\ra P^{\infty}(R[x,x^{-1}])^\circ\end{equation}
 is not injective because  the images of the classes
$[x',i]$ in $$P^{\infty}(R[x,x^{-1}])^\circ/{\mathbb K}^{\circ \circ}P^{\infty}(R[x,x^{-1}])^\circ$$ are non-zero and they belong to the kernel of the reduction modulo ${\mathbb K}^{\circ \circ}$ of the morphism \ref{turnoff}.
 \qed
 
\subsection{$\d$-characters and cocharacters}\label{mulcharcochar}
For ${\mathbb G}_m=\text{Spec}\ R[x,x^{-1}]$ and 
${\mathbb G}_a:=\text{Spec}\ R[z]$ recall from \cite[pg. 313]{char} the group homomorphism, called the {\it canonical $\d$-character},
$$\psi:J^1({\mathbb G}_m)\ra \widehat{{\mathbb G}_a},$$ 
defined by the $R$-algebra map 
sending
\begin{equation}
\label{polar0}
z\mapsto \Psi:=\frac{1}{p}\log\left(\frac{\phi(x)}{x^p}\right):=\sum_{n=1}^{\infty}(-1)^{n-1}
\frac{p^{n-1}}{n}\left(\frac{x'}{x^p}\right)^n\in R[x,x^{-1},x']^{\widehat{\ }}.\end{equation}
By Remark \ref{extensionn} we have an induced morphism
\begin{equation}
\label{polar1}
P^{\infty}(\psi):P^{\infty}({\mathbb G}_m)\ra P^{\infty}({\mathbb G}_a),\end{equation}
which we still call the {\it canonical $\d$-character}.

As a surprising turn of events, we will construct a ``canonical" section of $P^{\infty}(\psi)$. To explain our construction recall that
$$J^{\infty}(R[z])=R[z,z',z'',...]^{\widehat{\ }}$$
and for each $i\geq 1$ denote by $\phi^{-i}(z)$ the image of the element $[z,i]$ in 
$$C:=\varinjlim_{\phi} J^{\infty}(R[z]).$$ 
In particular $\phi(\phi^{-i}(z))=\phi^{-i+1}(z)$.
Now let $(a_n)_{n\in \bZ}$ be any sequence of elements $a_n\in pR$ such that
\begin{equation}
\label{priponel}
\lim_{n\ra \pm \infty} a_n=0,\end{equation}
Consider the element
\begin{equation}
\label{research1}
\Sigma= \exp\left( \sum_{n=1}^{\infty} a_n \phi^{-n}(z)\right)\in \widehat{C}
\end{equation}
Since $\widehat{C}$ possesses a natural $p$-derivation and $\sigma$ is invertible in
$\widehat{C}$
there is a unique $R$-algebra homomorphism
$$J^{\infty}(R[x,x^{-1}])\ra \widehat{C}$$
commuting with the natural $p$-derivations on the two rings and sending $x\mapsto \Sigma$.
Tensoring with ${\mathbb K}^\circ$ and taking the limits along the naturally induced relative Frobenius lifts we get a ring homomorphism, which we continue to denote by,
$$\sigma:P^\circ(J^{\infty}(R[x,x^{-1}]),\phi)\ra P^\circ(J^{\infty}(R[z]),\phi),$$
commuting with the relative Frobenius lifts, and sending $x\mapsto \Sigma$. This $\sigma$ induces a morphism, still denoted by,
\begin{equation}
\label{polar2}
\sigma:P^{\infty}({\mathbb G}_a)\ra P^{\infty}({\mathbb G}_m).\end{equation}
A map $\sigma$ as in \ref{polar2} will be called a {\it cocharacter} of ${\mathbb G}_m$.
We have the following result. 

\begin{theorem}\label{sandocal}
There is a unique 
 cocharacter $\sigma$, see Equation \ref{polar2}, with the property that $\sigma$
is a right inverse to the  canonical  $\d$-character $P^{\infty}(\psi)$ in Equation~\ref{polar1}. It is given by 
the sequence $a_n=p^n$ for $n\geq 1$ and $a_n=0$ for $n\leq 0$. \end{theorem}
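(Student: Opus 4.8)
The plan is to work entirely at the level of the $\delta$-rings and their limits, reducing the statement about $P^{\infty}$-morphisms to a concrete computation with the power series $\Psi$ and $\Sigma$. First I would unwind the definitions: a cocharacter $\sigma$ is determined by a sequence $(a_n)_{n\geq 1}$ in $pR$ with $a_n\to 0$, via $x\mapsto \Sigma=\exp\left(\sum_{n\geq 1} a_n\phi^{-n}(z)\right)$ in $\widehat{C}$, where $C=\varinjlim_\phi J^{\infty}(R[z])$. The composite $P^{\infty}(\psi)\circ\sigma$ is the map $P^{\infty}({\mathbb G}_a)\ra P^{\infty}({\mathbb G}_a)$ induced on $\delta$-rings by $z\mapsto \Psi(\Sigma)$, i.e. by substituting $x=\Sigma$ into $\Psi=\frac1p\log(\phi(x)/x^p)$. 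So $\sigma$ is a right inverse of $P^{\infty}(\psi)$ if and only if this substitution yields the identity, that is, $\frac1p\log\bigl(\phi(\Sigma)/\Sigma^p\bigr)=z$ as elements of $\widehat{C}$ (more precisely, of $P^\circ(J^\infty(R[z]),\phi)$; one should note the relevant identity already holds in $\widehat{C}$ before tensoring with ${\mathbb K}^\circ$).

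Next I would carry out the logarithm computation. Since $\log$ is additive on $1+pR$-type arguments in the $p$-adically complete setting and $\phi$ is a ring homomorphism commuting with $\exp$ on the relevant elements, we get
$$\frac1p\log\left(\frac{\phi(\Sigma)}{\Sigma^p}\right)=\frac1p\left(\phi\Bigl(\sum_{n\geq 1}a_n\phi^{-n}(z)\Bigr)-p\sum_{n\geq 1}a_n\phi^{-n}(z)\right)=\frac1p\sum_{n\geq 1}a_n\phi^{-n+1}(z)-\sum_{n\geq 1}a_n\phi^{-n}(z),$$
using $\phi(\phi^{-n}(z))=\phi^{-n+1}(z)$. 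Reindexing the first sum ($m=n-1$) gives $\frac1p\bigl(a_1 z+\sum_{m\geq 1}a_{m+1}\phi^{-m}(z)\bigr)-\sum_{m\geq 1}a_m\phi^{-m}(z)$. For this to equal $z$ we need the coefficient of $z$ to be $1$, i.e. $a_1/p=1$, so $a_1=p$; and for each $m\geq 1$ the coefficient of $\phi^{-m}(z)$ to vanish, i.e. $a_{m+1}/p-a_m=0$, so $a_{m+1}=pa_m$, giving inductively $a_n=p^n$. One must check this sequence is admissible: $a_n=p^n\in pR$ and $p^n\to 0$ $p$-adically, so condition \eqref{priponel} holds (as a one-sided sequence, with $a_n=0$ for $n\leq 0$). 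This simultaneously establishes existence and, since the system of equations on the $a_n$ has a unique solution, uniqueness — provided the classes $z,\phi^{-1}(z),\phi^{-2}(z),\dots$ are ``independent enough'' that matching coefficients is legitimate.

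That independence is where I expect the only real friction. One needs that if $\sum_{m\geq 0} c_m\phi^{-m}(z)=z$ in $\widehat{C}$ (equivalently in $P^\circ(J^\infty(R[z]),\phi)$) with $c_m\in R$, then $c_0=1$ and $c_m=0$ for $m\geq 1$. I would prove this by pulling back along $\phi^N$ for large $N$: $\phi^N$ is injective (its reduction mod $p$ is Frobenius on a reduced ring, by Remark \ref{d2}(3) and Lemma \ref{l1}), and $\phi^N(\phi^{-m}(z))=\phi^{N-m}(z)$, which for $m\leq N$ lands in $J^\infty(R[z])=R[z,z',z'',\dots]^{\widehat{\ }}$; there $z,\phi(z),\phi^2(z),\dots$ are genuinely algebraically independent modulo $p$ (one can see $\phi^j(z)\equiv z^{p^j}\pmod p$ and these are distinct monomials, or argue via the fibration structure), so a finite linear relation forces all coefficients to vanish, and the $p$-adic completion/limit does not introduce new relations by the separatedness already recorded. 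Carrying this out cleanly — tracking that everything takes place in the honest ring $\widehat{C}$ and only afterward tensoring with ${\mathbb K}^\circ$ and passing to $\mathcal{B}$ — is the main bookkeeping obstacle; the algebra itself is the short computation above.
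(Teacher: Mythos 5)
Your argument is correct and follows essentially the same route as the paper: both reduce to the identity $\frac1p\log\bigl(\phi(\Sigma)/\Sigma^p\bigr)=\frac1p(\phi-p)\bigl(\sum_n a_n\phi^{-n}(z)\bigr)$, extract coefficients of the $\phi^{-m}(z)$, and solve the resulting recurrence (the paper isolates this as Lemma \ref{polar59} and closes the loop with Theorem \ref{uniuni}, exactly as you implicitly do). One small correction: $\phi\bigl(\sum a_n\phi^{-n}(z)\bigr)=\sum\phi(a_n)\phi^{-n+1}(z)$, so the recurrence is $\phi(a_1)=p$ and $\phi(a_{m+1})=pa_m$ rather than $a_1=p$ and $a_{m+1}=pa_m$; since $\phi$ is bijective on $R$ and fixes $p$, the unique admissible solution is still $a_n=p^n$, so your conclusion is unaffected.
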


First, we need a lemma. 

\begin{lemma}\label{polar59}
There exists a unique sequence $(a_n)_{n\in \bZ}$, $a_n\in R$, satisfying
\ref{priponel},
such that the following equality holds in $\widehat{C}$,
\begin{equation}\label{goigoigoila}
(\phi-p)\left( \sum_{n\in \bZ} a_n \phi^{-n}(z)\right)=pz.
\end{equation}
It is given by $a_n=p^n$ for $n\geq 1$ and $a_n=0$ for $n\leq 0$.
\end{lemma}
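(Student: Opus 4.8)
\textbf{Proof strategy for Lemma \ref{polar59}.}
The equation \eqref{goigoigoila} is a linear recurrence in the coefficients $(a_n)$ once we expand the left-hand side. Writing $S = \sum_{n\in\bZ} a_n \phi^{-n}(z)$, and using $\phi(\phi^{-n}(z)) = \phi^{-n+1}(z)$, we get $\phi(S) = \sum_n a_n \phi^{-n+1}(z) = \sum_n a_{n+1}\phi^{-n}(z)$, so that $(\phi - p)(S) = \sum_{n\in\bZ}(a_{n+1} - p a_n)\phi^{-n}(z)$. The first step is to observe that the elements $\phi^{-n}(z)$, $n\in\bZ$, are algebraically independent over $K$ in $\widehat C[1/p]$ — indeed $C = \varinjlim_\phi J^{\infty}(R[z])$ is the free $\delta$-ring on $z$ with $\phi$ inverted, and the transition maps $\phi$ are injective (since $J^\infty(R[z])/p$ is reduced, cf. Remark \ref{d2}(3)), so $\{\phi^{-n}(z)\}_{n\in\bZ}$ maps to an algebraically independent set; in particular they are $R$-linearly independent topological-basis elements of the relevant summand. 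Hence \eqref{goigoigoila} holds if and only if, comparing coefficients, $a_1 - pa_0 = p$ (the coefficient of $\phi^0(z)=z$) and $a_{n+1} - p a_n = 0$ for all $n \neq 0$.

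The second step is to solve this recurrence subject to the convergence condition \eqref{priponel}, i.e. $a_n \to 0$ as $n \to \pm\infty$ in $R$ ($p$-adically). For $n \geq 1$ the recursion $a_{n+1} = p a_n$ forces $a_n = p^{n-1} a_1$; for $n \leq -1$ the recursion (reading it as $a_n = p^{-1} a_{n+1}$) would force $a_n = p^{n} a_0$ — but these lie in $R$ only if $a_0 = 0$ (since negative powers of $p$ are not in $R$), which also gives $a_n = 0$ for all $n \leq 0$ by downward induction. Then $a_1 - p a_0 = a_1 = p$, so $a_1 = p$ and $a_n = p^n$ for $n \geq 1$. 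Finally one checks the convergence condition: $p^n \to 0$ as $n \to +\infty$ and $0 \to 0$ as $n \to -\infty$, so \eqref{priponel} holds, and the sum defining $S$ converges $p$-adically in $\widehat C$. Uniqueness is immediate from the above since every step was forced.

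I expect the main (only real) obstacle to be making precise the \textbf{linear independence of the $\phi^{-n}(z)$} and the legitimacy of ``comparing coefficients'' — one must be a little careful that $\widehat C$ is a $p$-adic completion and the sums are infinite, so the cleanest route is to note $\widehat C = \varinjlim_\phi \widehat{J^\infty(R[z])}$ with each $\phi$ injective and each $J^\infty(R[z]) = R[z,z',z'',\ldots]\h$ a power-series-type ring in which the monomials form a topological basis; tracking which $\phi^{-n}(z)$ appear linearly (as opposed to in higher-degree monomials) then makes the coefficient comparison rigorous. Everything else is a routine finite computation with the recurrence and the elementary observation that $R$ contains no negative powers of $p$.
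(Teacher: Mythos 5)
Your proposal follows essentially the same route as the paper: expand $(\phi-p)S$, compare coefficients of the $\phi^{-n}(z)$, use the convergence condition \ref{priponel} to force $a_n=0$ for $n\leq 0$, and then solve the resulting one-sided recurrence to get $a_n=p^n$ for $n\geq 1$. The paper, like you, treats the coefficient comparison as legitimate without belaboring it; your sketch of why the $\phi^{-n}(z)$ admit no nontrivial convergent $R$-linear relation (injectivity of $\phi$ mod $p$ on the reduced ring $J^{\infty}(R[z])/p$, then reduction mod $p^m$) is the right justification.

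One slip to correct: since $\phi$ is a ring endomorphism and the $a_n$ lie in $R$ (on which $\phi$ is the Witt-vector Frobenius, not the identity), one has $\phi(a_n\phi^{-n}(z))=\phi(a_n)\,\phi^{-n+1}(z)$, so the recurrence is $\phi(a_{n+1})-pa_n=p\,\delta_{n,0}$ (equivalently, as in the paper, $\phi(a_n)=pa_{n-1}$ for $n\neq 1$ and $\phi(a_1)=p$), not $a_{n+1}-pa_n=p\,\delta_{n,0}$. This does not affect your conclusions, because $\phi$ is a valuation-preserving automorphism of $R$: the divisibility argument killing $a_n$ for $n\leq 0$ is unchanged, and the unique solution $a_1=\phi^{-1}(p)=p$, $a_{n+1}=p\,\phi^{-1}(a_n)$, is still $a_n=p^n$, which is fixed by $\phi$. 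But as written your recurrence is only valid when all $a_n\in\bZ_p$, so the derivation should be restated with the $\phi$ in place.
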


\begin{proof}
Equation \ref{goigoigoila} implies the conditions
\begin{equation}
\label{mamal}
\phi(a_n)=pa_{n-1}\ \ \ \text{for}\ \ \ n\in \bZ,\ \ n\neq 1.\end{equation}
If $a_{n_1}\neq 0$ for some $n_1\leq 0$ we get that $a_n$ does not converge to $0$ as $n\ra -\infty$, a contradiction. So $a_n=0$ for all $n\leq 0$. Then 
\ref{goigoigoila} implies $a_1=p$ and, with this condition, 
the recurrence relation Equation~\ref{mamal} has a unique solution, $a_n=p^n$ for $n\geq 1$.
\end{proof}

\

{\it Proof of Theorem \ref{sandocal}}.
Let $(a_n)_{n\in \bZ}$ with $a_n\in pR$ satisfy Equation \ref{priponel} and let $\Sigma$ be as in Equation \ref{research1}.
The composition
\begin{equation}
\label{polar7}
P^\circ(J^{\infty}(R[z]),\phi)\stackrel{z\mapsto \Psi}{\ra} P^\circ(J^{\infty}(R[x,x^{-1}]),\phi)\stackrel{x\mapsto \Sigma}{\ra} P^\circ(J^{\infty}(R[z]),\phi)\end{equation}
sends $z$ into
\begin{equation}
\label{zolof}
\frac{1}{p}\log\left(  \frac{\exp(\sum \phi(a_n)\phi^{-n+1}(z))}{(\exp(\sum a_n \phi^{-n}(z)))^p}
\right)= (\phi-p)\left( \sum_{n\in \bZ} a_n \phi^{-n}(z)\right).\end{equation}
If the composition in \ref{polar7} is the identity then, by Lemma \ref{polar59}, $a_n=p^n$ for $n\geq 1$ and $a_n=0$ for $n\leq 0$. Conversely, if $a_n=p^n$ for $n\geq 1$ and $a_n=0$ for $n\leq 0$ then  the map in \ref{polar7} sends $z\mapsto z$ and hence $z'\mapsto z'$, $z''\mapsto z''$, etc.
 By Proposition \ref{uniuni}
the composition \ref{polar7} is the identity. 
\qed

\begin{remark}\label{rmknosec}
One is tempted to expect that the  canonical  $\d$-character $P^{\infty}(\psi)$ in \ref{polar1}
has no section in the category of perfectoid spaces, or at least in the category ${\mathcal P}_{\Phi}$.
\end{remark}

\subsection{Complex analytic analogue}\label{complexGm}
The above construction can be viewed as an arithmetic analogue of the following complex analytic picture.
Consider the  differential equation
 \begin{equation}
 \label{exponential}
 \frac{\d_z u}{u}=\beta
 \end{equation}
 where $\beta$ a given function and $u$  an unknown function, both in the ring ${\mathcal O}$ of holomorphic functions of $z$ defined in a simply connected domain of ${\mathbb C}$, and  $\d_z:=d/dz$.
 Let $z_0$ be a point in the domain.
 Equation \ref{exponential} has the  solution $u=u_{\beta}$,
 \begin{equation}
 \label{expo}
 u_{\beta}(z):=\exp\left(\int_{z_0}^z \beta(t)dt\right).
 \end{equation}
 The map $\beta\mapsto u_{\beta}$ is a homomorphism.
As an arithmetic analogue of the above we may consider the group homomorphism $\psi:R^{\times}\ra R$ defined by
\begin{equation}
\label{psimap}
\psi(u) :=  \Psi_{|x=u,x'=\d u},
\end{equation}
where $\Psi$ is as in Equation \ref{polar0}.
 For any fixed $\beta\in R$, one can consider the {\it arithmetic differential equation}
\begin{equation}
\label{e}
\psi(u)=\beta.
\end{equation}
  Since $\psi$ in \ref{psimap} is a homomorphism it can be viewed as an analogue of the logarithmic derivative map 
  $${\mathcal R}^{\times}\ra {\mathcal R},\ \ \ u\mapsto \frac{\d_z u}{u};$$
   hence \ref{e} can be viewed as  an analogue of \ref{exponential}. 
Note that \ref{e} is equivalent to each of the following  equations
\begin{equation}\label{dodododo}
\phi (u)=\epsilon\cdot u^p\ \ \ \ \text{or}\ \ \ \ 
\d u=\alpha\cdot u^p
\end{equation}
where $\epsilon=\exp(p\beta)=1+p\alpha$ and $\exp$ the $p$-adic exponential. A solution to \ref{dodododo} (and hence to \ref{dodododo} or \ref{e}) has the form
\begin{equation}
\label{research}
u=\Sigma_{|\phi^{-i}(z)=\phi^{-1}(u)}\end{equation}
where $\Sigma$ is as in \ref{research1}.
So \ref{research} can be viewed as an analogue of \ref{expo}.

\section{The case of elliptic curves}

We now turn our attention to a detailed consideration of the case of elliptic curves. We give a couple of indications as to what results persist to more general abelian varieties. Our first point of interest concerns a study of $\d$-characters and cocharacters similar to Section~\ref{mulcharcochar}. 

\subsection{$\d$-characters and cocharacters}\label{deltacharacters}

Let us review here some of the theory in \cite{char, frob, equations}.
Assume $p\neq 2,3$.
Let $E$ be an elliptic curve over $R$. 
Recall from Proposition 4.45 in \cite{equations} that for $n\geq 1$ we have
$$\ker(J^n(E)\ra J^{n-1}(E))\simeq \widehat{{\mathbb G}_a},$$
where ${\mathbb G}_a=\Spec R[t]$ is the additive group scheme.

\

{\it Assume in what follows that
$E$ does not possess a Frobenius lift. }

\

The homomorphisms of groups in the category of $p$-formal schemes,
 $$\psi:J^2(E)\ra \widehat{{\mathbb G}_a}$$
  are called {\it $\d$-characters}. These objects were introduced in \cite{char}, where the more general case of abelian schemes was also considered, and may be viewed as  arithmetic analogues of the maps considered by Manin in the case of abelian varieties over function fields \cite{manin63}.  It is known that the $R$-module of $\d$-characters is  free of rank $1$ \cite[Prop. 3.2]{char}.   

Let now $T$ be an \'{e}tale coordinate on an open set $X=\text{Spec}\ A\subset E$ containing the origin and let $$\ell(T):=\ell_E(T)=\sum_{i=1}^{\infty} c_i T^i\in R[1/p][[T]],\ \  c_1=1,$$
be the formal group law on $E$. We also assume the origin of $E$ is defined in $X$ by the ideal $(T)$.
We have  natural embeddings $A\subset \widehat{A}\subset R[[T]]$.
Then $$\omega:=d \ell(T)=\Omega \cdot dT$$ satisfies 
$$\Omega:=\sum_{i=1}^{\infty} ic_iT^{i-1}\in A^{\times}$$
 and $\omega$ extends to an invertible $1$-form on the whole of $E$ which is a basis for the global $1$-forms on $E$. 
Recall from \cite[Prop 3.13]{equations} that for $n\geq 1$,
$$
\cO(J^n(X))=B_n:=\widehat{A}[T',...,T^{(n)}]^{\widehat{\ }}\subset S_n:= R[[T]][T',...,T^{(n)}]^{\widehat{\ }}$$
where  $T',T'',...$ are new variables and  $\d T=T'$, $\d T'=T''$, etc.
By \cite[Thm. 7.22]{equations} one can  choose a basis  $\psi=\psi_{E,\omega}$ for the module of $\d$-characters that, viewed as an element of 
$$\cO(J^2(E))\subset  B_2\subset S_2,$$
satisfies
\begin{equation}
\label{psi}\psi=\frac{1}{p}(
\ell(T)^{\phi^2}+\lambda_1\ell(T)^{\phi}+p\lambda_0\ell(T))\in S_2\end{equation}
where $\lambda_1:=\lambda_1(E,\omega)\in R$ and $\lambda_0:=\lambda_0(E,\omega)\in R$. 

We call this $\psi$ in \ref{psi} the {\it $\d$-character} attached to $E,\omega$. Let us record some facts proved in \cite{char, frob, equations} in the form of the following.

\begin{lemma}\label{contrast}\ 
\begin{enumerate}
\item Assume $E$  descends to $\bZ_p$ and $\omega$ is defined over $\bZ_p$. Then
$\lambda_1(E,\omega)= -a_p$, where $a_p$ is the trace of Frobenius on $\overline{E}$, and $\lambda_0(E,\omega)=1$. In particular $\lambda_1(E,\omega)\not\equiv 0 \bmod p$ if and only if $\overline{E}$ is ordinary.

\item If $\overline{E}$ is superordinary  then $\lambda_1(E,\omega)\not\equiv 0 \bmod p$. 

\item If $\overline{E}$ is supersingular then $\lambda_1(E,\omega)\equiv 0 \bmod p$. 
\end{enumerate}
\end{lemma}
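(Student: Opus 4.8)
\textbf{Proof plan for Lemma \ref{contrast}.}

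The three assertions are all statements about the invariant $\lambda_1(E,\omega) \bmod p$, so the plan is to recall from \cite{char, frob, equations} the precise way $\lambda_1$ and $\lambda_0$ enter the defining formula \ref{psi} for the $\d$-character and then translate known facts about Frobenius lifts and the Hasse invariant into congruences for $\lambda_1$. For part (1), the key input is that when $E$ descends to $\bZ_p$ and $\omega$ is defined over $\bZ_p$, the $\d$-character $\psi_{E,\omega}$ can be compared with the classical Manin map and, more precisely, that the formal group law $\ell(T)$ and its Frobenius twists satisfy the Honda-type congruence governing the characteristic polynomial of Frobenius on $\overline{E}$; extracting the coefficients $\lambda_1,\lambda_0$ from the identity \ref{psi} and reducing mod $p$ then yields $\lambda_1 \equiv -a_p$ and $\lambda_0 \equiv 1$. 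The final clause of (1) is then immediate from the classical fact that $\overline{E}$ is ordinary iff $a_p \not\equiv 0 \bmod p$.

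For parts (2) and (3), I would argue as follows. The congruence class of $\lambda_1 \bmod p$ measures, roughly, the action of Frobenius on the first de Rham cohomology / the Serre-Tate coordinate. If $\overline{E}$ is supersingular, then Frobenius acts nilpotently modulo $p$ in the relevant sense and one reads off $\lambda_1 \equiv 0 \bmod p$ directly from the structure of \ref{psi} (the $\ell(T)^{\phi}$ term drops out mod $p$), giving (3). If $\overline{E}$ is superordinary (ordinary reduction, Serre-Tate parameter $q \not\equiv 1 \bmod p^2$), then the relevant unit is invertible mod $p$, so $\lambda_1 \not\equiv 0 \bmod p$, giving (2); here one uses that the Serre-Tate parameter being $\not\equiv 1 \bmod p$ would already be guaranteed by ordinarity and the finer hypothesis mod $p^2$ controls the next-order term which is exactly what $\lambda_1$ sees. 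Concretely all three are quotable from \cite[Thm. 7.22]{equations} together with the computations of $\lambda_1(E,\omega)$ in \cite{char, frob}, and the task is mainly to assemble them.

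The main obstacle I anticipate is part (2): unlike supersingularity, which forces a clean vanishing, the superordinary condition is a mod $p^2$ condition on the Serre-Tate parameter, and one has to be careful that this is genuinely what controls $\lambda_1 \bmod p$ rather than, say, $\lambda_1 \bmod p^2$. The resolution is to recall that $\lambda_1(E,\omega)$ is, up to a unit, the coefficient appearing in the Frobenius lift obstruction: $E$ has a Frobenius lift iff its Serre-Tate parameter is $1$, and the "first-order" failure — captured by whether the parameter is $\equiv 1 \bmod p^2$ — is exactly detected by $\lambda_1 \bmod p$. So the proof of (2) reduces to citing this interpretation of $\lambda_1$ from \cite{frob, equations} and observing that superordinary was defined precisely so that this obstruction is nonzero mod $p$. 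Since all the heavy lifting is done in the cited references, the write-up should be short: state the formula for $\lambda_1(E,\omega)$, then dispatch (1), (3), (2) in turn.
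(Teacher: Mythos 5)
Your plan coincides with the paper's proof, which is itself nothing more than an assembly of citations: assertion (1) is quoted from \cite[Sec. 0.8]{frob}, assertion (2) from \cite[Lem. 4.2 and 4.4]{char} together with \cite[Rmk. 7.31]{equations}, and assertion (3) from \cite[Rmk. 7.31]{equations} together with \cite[Cor. 8.89]{equations}. Your reading of (1) via the Honda-type congruence $\phi^2-a_p\phi+p=0$ on the formal group, and of (3) via nilpotence of Frobenius in the supersingular case, is the standard content of those sources, so as a citation-assembly the write-up would go through.

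One caution about your proposed ``resolution'' of the obstacle in part (2): you claim that $\lambda_1\bmod p$ \emph{exactly} detects whether the Serre--Tate parameter is $\equiv 1\bmod p^2$. Taken literally, this says that among ordinary curves one has $\lambda_1\not\equiv 0\bmod p$ if and only if $E$ is superordinary. That is inconsistent with part (1): for any ordinary $E$ defined over $\bZ_p$ (with no Frobenius lift), part (1) forces $\lambda_1=-a_p\not\equiv 0\bmod p$ irrespective of the Serre--Tate parameter, and the paper manifestly treats ``superordinary'' and ``ordinary and defined over $\bZ_p$'' as independent hypotheses (see Theorem \ref{sandoval}), so the second is not contained in the first. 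The lemma asserts superordinarity only as a \emph{sufficient} condition for $\lambda_1\not\equiv 0\bmod p$, not an equivalent one; the actual mechanism in \cite[Lem. 4.4]{char} is the equivalence of superordinarity with $E_{p-1}(A,B)f^1(A,B)\not\equiv 0\bmod p$ (quoted later in this paper), combined with the expression for $\overline{\lambda_1}$ in \cite[Rmk. 7.31]{equations}. Since your final step defers to those references rather than to the heuristic, the proof is salvageable as written, but the ``exactly detected'' claim should be deleted rather than used as a substitute for the citation.
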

\begin{proof}
 Assertion 1  follows from \cite[Sec 0.8 and 0.8]{frob}. Assertion 2  follows from \cite[Lem 4.2 and 4.4]{char} plus  \cite[Rmk 7.31]{equations}. Assertion 3 follows from \cite[Rmk. 7.31]{equations} plus \cite[Cor. 8.89]{equations}.
\end{proof}

\subsection{Sections of $P^{\infty}(\psi)$}

Consider again an elliptic curve $E$ over $R$ with no Frobenius lift and consider the $\d$-character
$$\psi=\psi_{E,\omega}:J^2(E)\ra \widehat{{\mathbb G}_a}$$  attached to some invertible $1$-form $\omega$ as in Equation \ref{psi}, where ${\mathbb G}_a=\Spec R[z]$.
By Remark \ref{extensionn} we have an induced morphism
\begin{equation}
\label{polar11}
P^{\infty}(\psi):P^{\infty}(E)\ra P^{\infty}({\mathbb G}_a)\end{equation}
which we still refer to as the  (perfectoid) {\it  $\d$-character}.
Let us say that $E$ is ordinary (respectively supersingular) if its reduction $\overline{E}$ is so. 
We will prove that if $E$ is supersingular then the map \ref{polar11} possesses a ``canonical 
section" (``canonical right inverse"),
\begin{equation}
\label{polar22}
\sigma:P^{\infty}({\mathbb G}_a)\ra P^{\infty}(E).\end{equation}
We will also prove that no such ``canonical section" exists for $E$ superordinary or ordinary and defined over $\bZ_p$. We need to introduce some terminology that makes the notion of ``canonical" we mean precise.

\

As in the case of the multiplicative group, recall that
$$J^{\infty}(R[z])=R[z,z',z'',...]^{\widehat{\ }}$$
and for each $i\geq 1$ denote by $\phi^{-i}(z)$ the image of the element $[z,i]$ in 
$$C:=\varinjlim_{\phi} J^{\infty}(R[z]).$$  
In particular, $\phi(\phi^{-i}(z))=\phi^{-i+1}(z)$. Consider any sequence $(a_n)_{n\in \bZ}$ of elements $a_n\in pR$ satisfying
$$\lim_{n\ra \pm \infty} a_n=0$$
and consider the element
\begin{equation}
\label{research11}
\Sigma=e_E\left( \sum_{n\in \Z} a_n \phi^{-n}(z)\right)\in p\widehat{C}
\end{equation}
where $e_E$ is the compositional inverse of the logarithm $\ell=\ell_E(T)$ of the formal group of $E$ with respect to an \'{e}tale coordinate $T$ on an affine open set  $X=\text{Spec}\ A\subset E$  containing the origin.
Since $\Sigma\in p\widehat{C}$ and  $\widehat{C}$ possesses a natural $p$-derivation there is a unique $R$-algebra homomorphism
\begin{equation}
\label{masa}
R[[T]][T',T'',T''',...]^{\widehat{\ }} \ra \widehat{C}\end{equation}
commuting with the natural $p$-derivations on the two rings and sending $T\mapsto \Sigma$.
Composing \ref{masa} with the natural ring homomorphism
$$J^{\infty}(A)\ra R[[T]][T',T'',T''',...]^{\widehat{\ }}$$
we get a homomorphism
$$J^{\infty}(A)\ra \widehat{C}.$$
Tensoring the latter with ${\mathbb K}^\circ$ and taking the limits along the naturally induced relative Frobenius lifts we get a ring homomorphism 
$$\sigma:P^\circ(J^{\infty}(A),\phi)\ra P^\circ(J^{\infty}(R[z]),\phi),$$
commuting with the relative Frobenius lifts, and sending $T\mapsto \Sigma$. This $\sigma$ induces a morphism, still denoted by, 
\begin{equation}
\label{polargoi}
\sigma:P^{\infty}({\mathbb G}_a)\ra P^{\infty}(X)\subset P^{\infty}(E).
\end{equation}
A morphism $\sigma$ as above will be called a {\it  cocharacter} of $E$. It depends on the sequence $(a_n)_{n\in \bZ}$ only,  not on the choice of \'etale coordinate $T$ nor the open set $X$.

\

Recall that we call $E$ {\it ordinary} (respectively {\it supersingular}) if it has ordinary (respectively supersingular) reduction mod $p$. Let us also say that $E$ is {\it superordinary} if it is ordinary  and has a Serre-Tate parameter $q(E) \not\equiv 1$ mod $p^2$.

\begin{theorem}\label{sandoval}
Assume $E$ has no Frobenius lift and let $P^{\infty}(\psi)$ be the  $\d$-character in \ref{polar11}. 
\begin{enumerate}
\item If $E$ is supersingular there exists a unique  cocharacter $\sigma$ with the property that $\sigma$ is  a right inverse to $P^{\infty}(\psi)$.
\item If $E$ is superordinary or ordinary and defined over $\bZ_p$ then   there exists no  cocharacter $\sigma$ with the property that $\sigma$ is  a right inverse to $P^{\infty}(\psi)$.
\end{enumerate}
\end{theorem}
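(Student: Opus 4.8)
\textbf{Proof plan for Theorem \ref{sandoval}.}

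The plan is to mimic the argument for ${\mathbb G}_m$ (Theorem \ref{sandocal}), reducing the existence of a cocharacter right inverse to $P^\infty(\psi)$ to the solvability, inside $\widehat{C}$, of a linear recurrence in the coefficients $(a_n)_{n\in\bZ}$ dictated by the formula \ref{psi} for $\psi$. Concretely, I would first compute the composition
$$P^\circ(J^\infty(R[z]),\phi)\stackrel{z\mapsto\Psi}{\ra}P^\circ(J^\infty(A),\phi)\stackrel{T\mapsto\Sigma}{\ra}P^\circ(J^\infty(R[z]),\phi),$$
where $\Psi$ is the image of $z$ under $\psi=\psi_{E,\omega}$, i.e.\ $\Psi=\tfrac1p(\ell(T)^{\phi^2}+\lambda_1\ell(T)^{\phi}+p\lambda_0\ell(T))$, and $\Sigma=e_E\!\left(\sum_{n\in\bZ}a_n\phi^{-n}(z)\right)$. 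Since $\ell\circ e_E=\mathrm{id}$ and $\ell$ intertwines $\phi$ on $T$ with $\phi$ on $z$ up to the additive structure of the formal group (more precisely $\ell(x+_E y)=\ell(x)+\ell(y)$, so $\ell(\Sigma)=\sum a_n\phi^{-n}(z)$ exactly), substituting $T=\Sigma$ sends $z$ to
$$\frac1p\Big(\phi^2+\lambda_1\phi+p\lambda_0\Big)\!\Big(\sum_{n\in\bZ}a_n\phi^{-n}(z)\Big).$$
By Proposition \ref{uniuni} the composition is the identity morphism of $P^\infty({\mathbb G}_a)$ if and only if this element equals $z$, and by the same kind of convergence argument as in Lemma \ref{polar59} (if some $a_{n_1}\neq0$ with $n_1\le 0$ then $a_n\not\to 0$ as $n\to-\infty$, etc.) this is equivalent to: $a_n=0$ for $n\le 0$, a normalizing condition fixing $a_1$, and for $n\ge 1$ the recurrence
$$\phi^2(a_{n+1})+\lambda_1\,\phi(a_n)+p\lambda_0\,a_{n-1}=p\,\delta_{n,1}\cdot(\text{appropriate constant}),$$
read off from matching the coefficient of $\phi^{-n+?}(z)$ on each side. (I would be careful to get the index shifts and the $p$'s right; this is the one genuinely fiddly computation.)

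\textbf{The supersingular case (part 1).} Here $\lambda_1\equiv 0\bmod p$ by Lemma \ref{contrast}(3) and $\lambda_0$ is a unit (one sees $\lambda_0\not\equiv 0$ from \ref{psi} since $\psi$ reduces to a nonzero $\delta$-character mod $p$; alternatively cite the relevant statement in \cite{equations}). Write $\lambda_1=p\mu$. Then the recurrence becomes $\phi^2(a_{n+1})+p\mu\,\phi(a_n)+p\lambda_0\,a_{n-1}=0$ for $n\ge 2$, with $a_1$ determined and $a_0=a_{-1}=\cdots=0$. Since $\phi$ is bijective on $R$, one solves for $a_{n+1}$ in terms of $a_n,a_{n-1}$: $a_{n+1}=-\phi^{-2}(p\mu\,\phi(a_n)+p\lambda_0 a_{n-1})$, which shows $v_p(a_{n+1})\ge \min(v_p(a_n),v_p(a_{n-1}))+1$, hence $a_n\to 0$ as $n\to\infty$. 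So a unique solution $(a_n)$ exists with $a_n\in pR$ and the convergence condition, giving a unique right-inverse cocharacter. The starting value: I expect $a_1$ to come out a $p$-adic unit times $p$ (from the $p\lambda_0\ell(T)$ term being the lowest-order one after normalization), so $a_1\in pR$ as required.

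\textbf{The ordinary/superordinary case (part 2).} Now $\lambda_1$ is a unit by Lemma \ref{contrast}(1),(2). The obstruction is that the recurrence $\phi^2(a_{n+1})+\lambda_1\phi(a_n)+p\lambda_0 a_{n-1}=0$ no longer forces $v_p(a_n)$ to increase: solving instead for the \emph{lowest} index, $\phi(a_n)=-\lambda_1^{-1}(\phi^2(a_{n+1})+p\lambda_0 a_{n-1})$, propagates valuations \emph{downward}, so a nonzero $a_n$ for large $n$ forces $a_m$ not to go to $0$ as $m\to-\infty$ unless all $a_n$ vanish — but all $a_n=0$ contradicts the normalization forcing $a_1\neq 0$. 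I would make this precise by examining the recurrence mod $p$: mod $p$ it reads $\bar\phi^2(\bar a_{n+1})+\bar\lambda_1\bar\phi(\bar a_n)=0$, a two-term recurrence with $\bar\lambda_1\neq 0$, whose only solution compatible with $\bar a_n=0$ for $n\le 0$ and $\bar a_1$ prescribed would force $\bar a_1=0$, i.e.\ $a_1\in pR\cdot p=p^2R$; iterating this (a descent on $v_p(a_1)$) shows $a_1=0$, a contradiction with the normalization. Hence no cocharacter right inverse exists. For the hypothesis ``defined over $\bZ_p$'' in the ordinary case: this is used to invoke Lemma \ref{contrast}(1) giving $\lambda_1=-a_p$, $\lambda_0=1$ with $a_p\not\equiv 0$; without the superordinary hypothesis we need some such extra input to know $\lambda_1$ is a unit (the Serre–Tate parameter condition is exactly the superordinary case, handled by Lemma \ref{contrast}(2)).

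\textbf{Main obstacle.} The main obstacle is bookkeeping: correctly deriving the recurrence \ref{mamal}-analogue from \ref{psi} under the substitution $T\mapsto\Sigma$ — in particular tracking how $\phi$ acts on $\phi^{-n}(z)$ through $e_E$ and $\ell_E$, confirming that $\ell_E(\Sigma)=\sum a_n\phi^{-n}(z)$ holds \emph{on the nose} (using additivity of $\ell_E$ and the fact that the $\phi^{-n}(z)$ already lie in $p\widehat{C}$ so the $p$-adic logarithm/group law converges), and getting the index alignment right so that the two cases are distinguished precisely by whether $\lambda_1$ is a unit. Once the recurrence is correctly in hand, the valuation arguments above are routine, and uniqueness in part 1 follows as in Lemma \ref{polar59} together with Proposition \ref{uniuni}.
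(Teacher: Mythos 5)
Your proposal is correct and follows essentially the same route as the paper: the paper isolates your recurrence analysis as Lemma \ref{pppp} (with normalization $a_2=p$, $a_3=-p\phi^{-2}(\lambda_1)$ and recurrence $\phi^2(a_{n+2})+\lambda_1\phi(a_{n+1})+p\lambda_0 a_n=0$ for $n\ge 2$), showing a convergent two-sided solution of $(\phi^2+\lambda_1\phi+p\lambda_0)\bigl(\sum a_n\phi^{-n}(z)\bigr)=pz$ exists iff $\lambda_1\in pR$, and then concludes exactly as you do via Lemma \ref{contrast} and Proposition \ref{uniuni}. The one step you gloss over --- ruling out nonzero $a_n$ for $n\le 0$ --- is genuinely more delicate for the three-term operator than for $\phi-p$ in Lemma \ref{polar59}, but the paper's valuation-descent argument handles it and your plan is otherwise sound.
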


\begin{remark} As in Remark~\ref{rmknosec}, one is again tempted to expect that the  canonical  $\d$-character $P^{\infty}(\psi)$ in \ref{polar11}
has no section in the category of perfectoid spaces, or at least in the category ${\mathcal P}_{\Phi}$.
\end{remark}

Before proving Theorem~\ref{sandoval}, we need a lemma characterizing when an element is in $pR$.  

\begin{lemma}\label{pppp}
Let  $\lambda_0, \lambda_1\in R$. The following are equivalent:

\begin{enumerate}
\item $\lambda_1\in pR$;

\item  There exists a sequence $(a_n)_{n\in \bZ}$, $a_n\in R$, such that
\begin{equation}
\label{pripon}
\lim_{n\ra \pm \infty} a_n=0,\end{equation}
and 
such that the following equality holds in $\widehat{C}$:
\begin{equation}\label{goigoigoi}
(\phi^2+\lambda_1\phi+p\lambda_0)\left( \sum_{n\in \bZ} a_n \phi^{-n}(z)\right)=pz.
\end{equation}
\end{enumerate}
Moreover if the above conditions hold then the sequence in condition 2 is unique, $a_2=p$,   $a_n=0$ for all $n\leq 1$, and $a_n\in pR$ for all $n\geq 3$.
\end{lemma}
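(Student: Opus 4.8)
The plan is to analyze Equation~\ref{goigoigoi} by comparing coefficients of the ``basis'' elements $\phi^{-n}(z)$, in exactly the same spirit as the proof of Lemma~\ref{polar59}. First I would apply the operator $\phi^2+\lambda_1\phi+p\lambda_0$ termwise to $\sum_n a_n\phi^{-n}(z)$, using $\phi(\phi^{-n}(z))=\phi^{-n+1}(z)$ and that $\phi$ acts on the coefficients $a_n\in R$. Collecting the coefficient of $\phi^{-n}(z)$ for each $n\in\bZ$ yields the recurrence
\begin{equation}
\phi^2(a_{n+2})+\lambda_1\phi(a_{n+1})+p\lambda_0 a_n = p\,[n=0],
\end{equation}
where the right-hand side is $p$ when $n=0$ and $0$ otherwise. (One must be a touch careful that the elements $\phi^{-n}(z)$, $z',z'',\dots$ together with their $\phi$-iterates are ``independent enough'' in $\widehat C$ that matching coefficients is legitimate; this is where the structure of $J^{\infty}(R[z])$ as a formal power series ring in $z,z',z'',\dots$, together with injectivity of $\phi$, gets used — cf. Remark~\ref{d2}(3) and the injectivity argument in the proof of Theorem~\ref{r1}.)

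Next I would run the convergence constraint $a_n\to 0$ as $n\to-\infty$ to kill the ``negative tail.'' For $n\le -1$ the recurrence reads $\phi^2(a_{n+2})+\lambda_1\phi(a_{n+1})+p\lambda_0 a_n=0$; if some $a_{m}\ne 0$ with $m\le 0$, then solving this recurrence ``downward'' (expressing $a_n$ in terms of $a_{n+1},a_{n+2}$, using that $\phi$ is a bijection on $R$) forces the $a_n$ to not go to $0$ as $n\to-\infty$ — more precisely one shows $v_p(a_n)$ stays bounded, contradicting \ref{pripon}. Hence $a_n=0$ for all $n\le 1$ (the indices $n\le -1$ give $a_{n+2}=0$ down through $a_1$; one checks the $n=-1$ equation forces $a_1=0$ as well since the $\lambda_1\phi(a_0)$ and $p\lambda_0 a_{-1}$ terms vanish). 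Plugging $a_1=a_0=\dots=0$ into the $n=0$ equation gives $\phi^2(a_2)=p$, i.e. $a_2=\phi^{-2}(p)=p$ since $\phi$ fixes $p$; so $a_2=p$.

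Then I would solve forward: for $n\ge 1$ the recurrence is $\phi^2(a_{n+2})=-\lambda_1\phi(a_{n+1})-p\lambda_0 a_n$ (no inhomogeneous term), which determines each $a_{n+2}$ uniquely from $a_{n+1},a_n$ by applying $\phi^{-2}$. Starting from $a_1=0$, $a_2=p$, induction gives existence and uniqueness of the whole forward sequence, and since the right-hand side above lies in $pR$ whenever $a_{n+1},a_n$ do (the $p\lambda_0 a_n$ term obviously, and the $\lambda_1\phi(a_{n+1})$ term because $a_{n+1}\in pR$ by induction), one gets $a_n\in pR$ for all $n\ge 3$, with $\phi^{-2}$ preserving $pR$. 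This forward sequence automatically satisfies $a_n\to 0$ as $n\to+\infty$ precisely because $\lambda_1\in pR$: writing $a_{n+2}=-\phi^{-2}(\lambda_1\phi(a_{n+1}))-\phi^{-2}(p\lambda_0 a_n)$ one reads off $v_p(a_{n+2})\ge \min(1+v_p(a_{n+1}),\,1+v_p(a_n))$, forcing $v_p(a_n)\to\infty$ geometrically. This establishes (2)$\Rightarrow$(1) as well: if such a convergent sequence exists, then the $n=0$ equation forces $a_2=p$ and then looking at the $\lambda_1$-dependence of the recurrence, or more directly reducing \ref{goigoigoi} modulo $p$ and modulo $p^2$, forces $\lambda_1\in pR$ — I would get (1)$\Rightarrow$(2) from the forward construction just described and (2)$\Rightarrow$(1) by observing that if $\lambda_1\notin pR$ then the forward recurrence cannot produce a sequence tending to $0$ (the valuations $v_p(a_n)$ would be eventually constant or decreasing).

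The main obstacle I anticipate is the rigorous justification that matching coefficients of $\phi^{-n}(z)$ in $\widehat C$ is valid — i.e. that the family $\{\phi^{-n}(z)\}_{n\in\bZ}$ (together with the derivatives $z',z'',\dots$ and their $\phi$-shifts) behaves like an independent set so that an identity $\sum_n b_n\phi^{-n}(z)=pz$ in $\widehat C$ forces $b_0=p$, $b_n=0$ for $n\ne 0$. This needs the explicit description $J^{\infty}(R[z])=R[z,z',z'',\dots]^{\widehat{\ }}$, the fact that $\phi$ is injective on it (Remark~\ref{d2}(3)), and a careful bookkeeping of how $\phi$ acts on the direct limit $C=\varinjlim_\phi J^{\infty}(R[z])$; the rest is the elementary two-sided recurrence analysis above, which parallels Lemma~\ref{polar59} closely. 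The final ``moreover'' clauses ($a_2=p$, $a_n=0$ for $n\le 1$, $a_n\in pR$ for $n\ge 3$, uniqueness) then fall out directly from the forward/backward solution of the recurrence.
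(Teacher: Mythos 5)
Your overall strategy is exactly the paper's: extract the two\mbox{-}sided recurrence $\phi^2(a_{n+2})+\lambda_1\phi(a_{n+1})+p\lambda_0 a_n=p\,[n=0]$ by matching coefficients of $\phi^{-n}(z)$, use the convergence condition \ref{pripon} to kill the tail $n\le 1$, then solve forward from $a_2=p$, $a_3=-p\phi^{-2}(\lambda_1)$ and read off the dichotomy from valuations ($v_p(a_n)\equiv 1$ when $\lambda_1\in R^\times$ versus $v_p(a_n)\to\infty$ when $\lambda_1\in pR$; the growth is linear, not geometric, but that is immaterial). The forward analysis and the uniqueness claim are fine as you describe them.

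The one step whose justification does not work as stated is the crucial ``negative tail'' claim. You propose to run the recurrence \emph{downward} by ``expressing $a_n$ in terms of $a_{n+1},a_{n+2}$, using that $\phi$ is a bijection on $R$.'' But in the downward direction the coefficient of $a_n$ is $p\lambda_0$, not $\phi^2$: bijectivity of $\phi$ is irrelevant, and $p\lambda_0$ is never a unit of $R$ (and may be $0$), so $a_n$ is simply not determined in $R$ by $a_{n+1},a_{n+2}$. The conclusion you assert --- that a nonzero $a_m$ with $m\le 1$ forces $v_p(a_n)$ to stay bounded as $n\to-\infty$ --- is correct, but it requires an actual argument valid for arbitrary $\lambda_0,\lambda_1\in R$. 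The paper proves it by a valuation descent: starting from $n_1\le 1$ with $a_{n_1}\ne 0$, it uses the homogeneous relation $\phi^2(a_{n_k})+\lambda_1\phi(a_{n_k-1})+p\lambda_0 a_{n_k-2}=0$ and a two\mbox{-}case analysis (according as $v_p(a_{n_k-1})\le v_p(a_{n_k})$ or not) to produce $n_{k+1}\in\{n_k-1,n_k-2\}$ with $v_p(a_{n_{k+1}})\le v_p(a_{n_k})$; in the second case one uses that $v_p(\phi^2(a_{n_k})+\lambda_1\phi(a_{n_k-1}))=v_p(a_{n_k})$ must equal $v_p(p\lambda_0 a_{n_k-2})>v_p(a_{n_k-2})$. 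You should replace your ``solve downward'' heuristic with this (or an equivalent) descent. Finally, your worry about the legitimacy of matching coefficients of $\phi^{-n}(z)$ in $\widehat{C}$ is reasonable but is treated as immediate in the paper as well (injectivity of $\phi$ on $J^{\infty}(R[z])$ plus the explicit polynomial description), so it is not a gap relative to the paper's own standard of rigor.
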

\begin{proof}
First we make the following:

\medskip

{\it Claim}. If condition 2 holds then $a_n=0$ for all $n\leq 1$.

\medskip

Indeed, assume there exists $n_1\leq 1$ with $a_{n_1}\neq 0$ and seek a contradiction. We will recursively construct a sequence of integers 
$$1\geq n_1> n_2> n_3>  ...$$
such that 
$$\infty > v_p(a_{n_1}) \geq v_p(a_{n_2}) \geq v_p(a_{n_3}) \geq ...$$
This implies that $(a_n)_{n \in \mathbb{N}}$ does not converge $p$-adically to $0$ as $n\ra -\infty$, which is a contradiction. It is easy to find $n_1$ satisfying the condition. Assume now $n_1,...,n_k$ have been chosen. Picking out the coefficient of $\phi^{2-n_k}(z)$ in 
 Equation \ref{goigoigoi} we get
 \begin{equation}
 \label{fomica}
 \phi^2(a_{n_k})+\lambda_1\phi(a_{n_k-1})+p\lambda_0a_{n_k-2}=0.
 \end{equation}
 Now define $n_{k+1}$ to be 
 $$n_k-1\ \ \ \text{or}\ \ \ n_k-2$$
  according as 
 $$v_p(a_{n_k-1})\leq  v_p(a_{n_k})\ \ \ \text{or}\ \ \ v_p(a_{n_k-1})>  v_p(a_{n_k}),$$
  respectively.
 We claim that  
 $$v_p(a_{n_{k+1}})\leq v_p(a_{n_k}).$$ This is clear in case
 $v_p(a_{n_k-1})\leq  v_p(a_{n_k})$. 
  In the case $v_p(a_{n_k-1})>  v_p(a_{n_k})$,
 we have
 $$v_p(a_{n_{k+1}})=v_p(a_{n_k-2})< v_p(p\lambda_0a_{n_k-2})=v_p(\phi^2(a_{n_k})+\lambda_1\phi(a_{n_k-1}))=v_p(a_{n_k}),$$
 which ends the proof of the claim. 
 
 \

 Now assume condition 2 holds and let us prove condition 1 holds. Assume $\lambda\in R^{\times}$ and seek a contradiction. By the claim above $a_n=0$ for $n\leq 1$ so condition \ref{goigoigoi} is equivalent to the conditions  
\begin{equation}
\label{can}
a_2=p,\ \ a_3=-p\phi^{-2}(\lambda_1),\end{equation}
together with the following conditions valid for $n\geq 2$:
\begin{equation}\label{recurrence}
\phi^2(a_{n+2})+\lambda_1\phi(a_{n+1})+p\lambda_0a_n=0.
\end{equation}
Now the recurrence relation \ref{recurrence} with conditions \ref{can} can be trivially and uniquely solved for $a_n\in R$, $n\geq 2$. 
Using the fact that $\lambda_1\in R^{\times}$ we easily prove by induction that $v_p(a_n)=1$ for all $n\geq 2$, a contradiction.

\

Conversely let us assume that condition 1 holds and let us prove that condition 2 holds.
Setting  $a_n=0$ for $n\leq 1$ we have that Equation \ref{goigoigoi} holds, again,  equivalent to establishing that 
Equation \ref{can} and Equation \ref{recurrence} also hold. Again, the recurrence relation \ref{recurrence} with conditions \ref{can} can be trivially solved for $a_n\in R$, $n\geq 2$. 
Using the fact that $\lambda_1\in pR^{\times}$ we easily prove by induction that 
$$a_{2k-1}, a_{2k}\in p^kR, \ \ \ k\geq 2,$$
hence $a_n\ra 0$, $p$-adically, so condition 2 holds. 

\

Finally assuming that the (equivalent) conditions 1 and 2 hold we need to prove uniqueness of the sequence $(a_n)$. By the Claim above we have $a_n=0$ for $n\leq 1$. Then, again,  \ref{goigoigoi} reduces to the recurrence relation \ref{recurrence} with conditions \ref{can}, which  has a unique solution with $a_n\in R$, $n\geq 2$. 

\end{proof}

We now return to the proof of the theorem. 

\

{\it Proof of Theorem \ref{sandoval}}.
Assume first that $\overline{E}$ is supersingular. By  Lemma \ref{contrast}, 
the coefficient $\lambda_1$ in 
Equation \ref{psi} satisfies $\lambda_1\in pR$. 
So, by Lemma \ref{pppp}, there exists a sequence $(a_n)_{n\in \bZ}$ such that for $\Sigma$
as defined in Equation \ref{research11} we have that Equation \ref{goigoigoi} holds. It follows  that the composition
\begin{equation}\label{lala}
P^\circ(J^{\infty}(R[z]),\phi)\stackrel{z\mapsto \psi}{\ra} P^\circ(J^{\infty}(A),\phi)\stackrel{T\mapsto \Sigma}{\ra} P^\circ(J^{\infty}(R[z]),\phi)\end{equation}
sends $z\mapsto z$ and hence $z'\mapsto z'$, $z''\mapsto z''$, etc. By Proposition \ref{uniuni}
the composition \ref{polar7} is the identity; hence the existence part of assertion 1 of the Theorem follows. The uniqueness part follows because if $(a'_n)_{n\in \bZ}$ is another sequence for which the corresponding composition \ref{lala}, with $\Sigma$ replaced by the corresponding $\Sigma'$ attached to $(a'_n)_{n\in \bZ}$, 
 is the identity then the sequence $(a'_n)$ coincides with $(a_n)$ by the uniqueness property in Lemma \ref{contrast}. 
 
 Finally assume that $E$ is superordinary or ordinary and defined over $\bZ_p$. Then, by  Lemma \ref{contrast}, 
the coefficient $\lambda_1$ in 
equation \ref{psi} satisfies $\lambda_1\in R^{\times}$. If a 
  cocharacter exists that is a right inverse to $P^{\infty}(\psi)$ then we get that the composition \ref{lala} is the identity for some sequence $(a_n)_{n\in \bZ}$ satisfying 
\ref{pripon} and \ref{goigoigoi}. This will force a contradiction with Corollary \ref{ppp}, which while proved later, is logically proved independent of the results of this section. We postpone the proof of Corollary~\ref{ppp} until Section~\ref{sec:QL}. This ends our proof.\qed

\begin{remark}
In this section we investigated the $\d$-characters $\psi$ of elliptic curves with no Frobenius lifts. One can ask for an analysis of the characters $\psi$ for elliptic curves $R$ that are canonical lifts. The analysis in this case is much simpler, it resembles the case of ${\mathbb G}_m$,  and we leave it to the reader.
\end{remark}

\subsection{Complex analytic analogue}\label{complexelliptic}
The arithmetic theory above is an analogue of a classical situation which we describe in what follows.
Let $E$ be the smooth projective elliptic curve over ${\mathbb C}(z)$ defined by the equation
$$y^2=x(x-1)(x-z).$$
 Let ${\mathcal R}$ be the ring of holomorphic functions on a simply connected domain not containing $0,1$. By Fuchs and Manin \cite{manin63},  there is a remarkable group homomorphism 
\begin{equation}
\label{maninsfunct}
\psi:E({\mathcal R})\ra {\mathcal R}\end{equation}
 defined as follows. If $P=(X,Y)\in E({\mathcal R})$ is a point of $E$ with $X,Y\in {\mathcal R}$, i.e., $x(P)=X$, $y(P)=Y$, 
then
$$
\psi(P) = \Lambda\ 
\int_{\infty}^{(X,Y)}\frac{dx}{y}=F(X,Y,\d_zX,\d_z^2 X),$$
where $\Lambda$ is the linear differential operator
$$\Lambda:=z(1-z)\left(z(1-z)\d_z^2+(1-2z)\d_z-\frac{1}{4}\right),$$ where $\d_z := d/dz$. 
Thus $F$ is a certain rational function with coefficients in ${\mathbb C}(z)$ in $4$ variables.
The expression involving the integral is well defined because the linear differential operator $\Lambda$ annihilates any function of $z$ of the form
$\int_{\gamma}\frac{dx}{y}$ where $\gamma$ is an integral $1$- cycle.
 Functions of the form $\int_{\gamma}\frac{dx}{y}$ are called in \cite{manin63} {\it period-functions}. 
 In this general case,  for $P=(X,Y)$ with $\psi(P)=0$, the {\it multivalued} integral $\int_{\infty}^P\frac{dx}{y}$, as a function of $z$, is, for each of its branches, a ${\mathbb C}$-linear combination of period-functions, so
  \begin{equation}
 \label{P}
 P=\pi\left(a_1 \cdot \int_{\gamma_1}\frac{dx}{y}+a_2\cdot \int_{\gamma_2}\frac{dx}{y}\right),\end{equation}
 where $\pi$ is the uniformization map for our family of elliptic curves the inverse of the multivalued Abel-Jacobi map $Q\mapsto \int_{\infty}^Q\frac{dx}{y}$. Here $a_1,a_2\in {\mathbb C}$ and $\gamma_1,\gamma_2$ is a basis for the integral homology. Of course $\pi$ is defined by elliptic functions. Our function $\psi$ in \ref{psi} is an arithmetic analogue of $\psi$ in Equation \ref{maninsfunct}. The ``points" of our arithmetic Manin kernel, to be discussed later, are an arithmetic analogue of the points \ref{P}. Finally if $\beta\in {\mathcal R}$ and $I(\beta)\in {\mathcal R}$
 is a solution of the linear inhomogeneous differential equation
 $$\Lambda (I(\beta))=\beta$$
 normalized such that its value and the value of its derivative at some fixed point vanish then
 a solution $P_{\beta}$ of $\psi(P_{\beta})=\beta$ is given by 
 \begin{equation}
 P_{\beta}=\pi(I(\beta)).
 \end{equation}
 The map $\sigma$ in \ref{polargoi} is an arithmetic analogue of the map $\beta\mapsto \pi(I(\beta))$.

 \section{Perfectoid spaces attached to arithmetic differential equations}
 
 \subsection{Arithmetic differential equations}
  Assume in what follows that $X$ is a smooth scheme  over $R$ and 
 set
 $$X^n:=J^n(X)$$
  for $n\geq 0$. 
  
  \begin{definition}
  An {\it arithmetic differential equation} on $X$, of order $n$, is a 
  closed formal subscheme
  $Y_n\subset X^n$.\end{definition}

  Let ${\mathcal I}$ be the ideal sheaf in $\cO_{X^n}$
  defining a $Y_n$ as above and for any $j\geq 1$ let
  $$Y_{n+j}\subset X^{n+j}$$
  be the closed formal subscheme with ideal generated by
  $${\mathcal I},\d{\mathcal I},...,\d^j{\mathcal I}.$$
   Also set
   $Y_m=X^m$ for $0\leq m\leq n-1$.
  We have a projective system of formal schemes
$$...\ra Y_3 \ra Y_2\ra Y_1\ra Y_0,$$
which we refer to as the {\it prolongation} of $Y_n$.
There are naturally induced $p$-derivations
\begin{equation}
\label{xz}
\pi_{*}\cO_{Y_0}\stackrel{\d}{\ra} \pi_{*}\cO_{Y_1}\stackrel{\d}{\ra} \pi_{*}\cO_{Y_2}\stackrel{\d}{\ra} \pi_{*}\cO_{Y_3} \ra ...\end{equation}
where $\pi:Y_n\ra X^0=\widehat{X}$ are the natural projections.
Define
$$\cO_{Y_{\infty}}:=\varinjlim \pi_{*}\cO_{Y_n}.$$
Recall that, since $X$ is Noetherian, the above limit in the category of sheaves equals the limit in the category of pre-sheaves. The system \ref{xz} induces a $p$-derivation $\d$ and an attached Frobenius lift $\phi$,
$$\d,\phi:\cO_{Y_{\infty}}\ra \cO_{Y_{\infty}}.$$

 \subsection{Quasi-linearity}\label{Qsection}
 Let $X$ be a smooth affine curve over $R$, let $A=\cO(X)$, and $B_n=\cO(J^n(X))$.
 Let $s\geq 1$ and $r\geq 0$. Set $X\ra {\mathbb A}^1=\text{Spec}\ R[T]$ an \'{e}tale coordinate, and view $T\in A$. Recall that we have a canonical identification 
 \begin{equation}
 \label{Bn}
 B_n=\widehat{A}[T',...,T^{(n)}]^{\widehat{\ }}.\end{equation}
 For $0\leq i\leq n$ define the subrings of $B_n$ by
 \begin{equation}
 \label{Fib}
 F^iB_n:=\sum_{k=0}^{n-i}p^k B_{i+k}=B_i+...+p^{n-i}B_n.\end{equation}
 For $0\leq i\leq n-1$ we have
 $$F^iB_n\subset F^{i+1}B_n.$$
 Also, for $0\leq i \leq n$ we have
 $$F^iB_n\subset F^iB_{n+1}.$$
 Furthermore, for $0\leq i\leq n$, we have:
 \begin{equation}
 \label{inclusions}
 \phi(F^iB_n)\subset F^iB_{n+1},\ \ \ \d(F^iB_n)\subset F^{i+1}B_{n+1}.\end{equation}
 
 \begin{definition}\label{rs}
 An element $f\in  B_{r+s}$ is called  {\it quasi-linear} of order $(r,s)$ if there exists an \'{e}tale coordinate $T\in A$ and elements $a_0,...,a_r\in \widehat{A}$ and $b_{s-1}\in F^{s-1}B_{r+s}$
 such that $a_r\in \widehat{A}^{\times}$ and
\begin{equation}
\label{quasi}
f=\sum_{i=0}^r a_i (T^{(s)})^{\phi^i}+b_{s-1}.
\end{equation}
We say $f$ is {\it non-degenerate} (respectively {\it totally degenerate}) if 
$a_0\in \widehat{A}^{\times}$ (respectively $a_0\in p\widehat{A}$).
 \end{definition}
 
 \begin{remark}\label{fifi}
 Clearly if $f$ is  quasi-linear of order $(r,s)$ and $g\in F^{s-1}B_{r+s}$ then $f+g$ is  quasi-linear of order $(r,s)$. If $f$ is  non-degenerate (respectively  totally degenerate) then so is $f+g$.\end{remark}

 \begin{definition}\ 
  Let $X$ be a smooth curve over $R$ and $Y_{r+s}\subset J^{r+s}(X)$
 an arithmetic differential equation of order $r+s$. We say $Y$ is {\it quasi-linear} of order $(r,s)$
 if there is a Zariski  open cover
 $$X=\bigcup_i X_i$$
 such that for all $j$ the ideal of $Y_{r+s}\cap J^{r+s}(X_i)$ in $\cO(J^{r+s}(X_i))$ is generated by
 a quasi-linear element $f_i$ of order $(r,s)$.  We say $Y$ is {\it non-degenerate} (respectively {\it totally degenerate}) if, in addition,  all $f_i$ can be chosen to be non-degenerate (respecively totally degenerate).
 \end{definition}
 
 \begin{remark}
 If $X$ is a scheme of finite type over $R$, $X'\ra X$ is a morphism of finite type, and $Y_n\subset J^n(X)$ is
 an arithmetic differential equation of order $n$ then we may define the {\it pull-back} $Y'_n$ of $Y_n$
 to be the arithmetic differential equation  on $X'$ given by $Y'_n:=Y_n\times_{J^n(X)}J^n(X')$.
 Clearly, if $X$ is a smooth curve and $X'\ra X$ is \'{e}tale then $Y'_n$ is quasi-linear (respectively non-degenerate, totally degenerate) provided $Y_n$ is quasi-linear (respectively non-degenerate, totally degenerate).
 \end{remark}
 
 \begin{lemma}
 \label{heredity}
 Assume $f$ is quasi-linear of order $(r,s)$ as in Definition \ref{rs}, given by \ref{quasi}. Then, for $j\geq 1$,  $\d^j f$ is quasi-linear
 of order $(r,s+j)$ given by
 \begin{equation}
\label{quasiii}
\d^j f=f_j:=\sum_{i=0}^r a_i^{p^j}(T^{(s+j)})^{\phi^i}+b_{s+j-1},
\end{equation}
for some $b_{s+j-1}\in F^{s+j-1}B_{r+s+j}$. \end{lemma}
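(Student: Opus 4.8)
The plan is to induct on $j$, reducing everything to the case $j=1$. Granting that case---namely that $\d f$ is quasi-linear of order $(r,s+1)$ and has the shape $\sum_{i=0}^{r}a_i^{p}(T^{(s+1)})^{\phi^i}+b_s$ with $b_s\in F^{s}B_{r+s+1}$ and leading coefficient $a_r^{p}\in\widehat{A}^{\times}$---one applies it again to $\d f$, with $(r,s)$ replaced by $(r,s+1)$ and the coefficients $(a_i)$ by $(a_i^{p})$; iterating $j$ times raises the coefficients to the $p^{j}$-th power, shifts the order to $(r,s+j)$, and lands the correction in $F^{s+j-1}B_{r+s+j}$, which is exactly \ref{quasiii}. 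So it suffices to handle $j=1$.

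For $j=1$, starting from $f=\sum_{i=0}^{r}a_i(T^{(s)})^{\phi^i}+b_{s-1}$ as in \ref{quasi}, I would expand $\d f$ by applying the additivity rule $\d(x+y)=\d x+\d y+C_p(x,y)$ summand by summand and then the Leibniz rule $\d(ab)=a^{p}\d b+b^{p}\d a+p\,\d a\,\d b$ to each product $a_i(T^{(s)})^{\phi^i}$, using that $\d$ commutes with $\phi$ and that $\d T^{(s)}=T^{(s+1)}$. The ``main'' contribution is then $\sum_{i=0}^{r}a_i^{p}\,\d\big((T^{(s)})^{\phi^i}\big)=\sum_{i=0}^{r}a_i^{p}(T^{(s+1)})^{\phi^i}$, which is the asserted leading part, and $a_r^{p}$ is a unit because $a_r$ is. Everything then reduces to showing that the leftover remainder $b_s$---consisting of $\d b_{s-1}$, all the $C_p$-terms produced by the additivity rule, and, for each $i$, the two Leibniz cross-terms $\big((T^{(s)})^{\phi^i}\big)^{p}\d a_i$ and $p\,\d a_i\,(T^{(s+1)})^{\phi^i}$---lies in $F^{s}B_{r+s+1}$, which equals $F^{(s+1)-1}B_{r+(s+1)}$, so that Definition \ref{rs} applies.

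To control $b_s$ I would use the filtration \ref{Fib} together with the inclusions \ref{inclusions} and the fact (noted in \ref{Fib}) that the $F^{i}B_n$ are subrings of $B_n$. From $T^{(s)}\in B_s=F^{s}B_s$ and $\phi(F^{s}B_n)\subseteq F^{s}B_{n+1}$, iterating $\phi$ gives $(T^{(s)})^{\phi^i}\in F^{s}B_{s+i}\subseteq F^{s}B_{r+s}$ for $0\le i\le r$; also $b_{s-1}\in F^{s-1}B_{r+s}\subseteq F^{s}B_{r+s}$. Hence every $C_p$-term, being a polynomial with $\bZ$-coefficients in arguments lying in the subring $F^{s}B_{r+s}$, stays in $F^{s}B_{r+s}\subseteq F^{s}B_{r+s+1}$; here one must use the polynomial formula for $C_p$, not the identity $C_p(x,y)=\d(x+y)-\d x-\d y$, which would only give membership in $F^{s+1}$. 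The term $\d b_{s-1}$ lies in $F^{s}B_{r+s+1}$ by $\d(F^{s-1}B_{r+s})\subseteq F^{s}B_{r+s+1}$. For the Leibniz cross-terms, $\big((T^{(s)})^{\phi^i}\big)^{p}\d a_i\in F^{s}B_{s+i}$ because $\d a_i\in B_1\subseteq B_s$ and $F^{s}B_{s+i}$ is a subring, while $(T^{(s+1)})^{\phi^i}=\phi^i(T^{(s+1)})\in F^{s+1}B_{s+1+i}$ gives $p\,\d a_i\,(T^{(s+1)})^{\phi^i}\in p\,F^{s+1}B_{s+1+i}\subseteq F^{s}B_{r+s+1}$, the last inclusion because multiplying the summand $p^{k}B_{s+1+k}$ by $p$ yields $p^{k+1}B_{s+(k+1)}$, a summand of $F^{s}B_{r+s+1}$ since $k+1\le r+1$. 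Adding these up gives $b_s\in F^{s}B_{r+s+1}$, completing $j=1$ and hence the induction.

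The only real obstacle is the filtration bookkeeping of the last step: the remainder must be pinned down at level $s$---the level appropriate to order $(r,s+1)$---rather than merely at level $s+1$, and obtaining this sharp bound forces one to treat $C_p$ through its explicit polynomial expansion and to keep track of the extra factor of $p$ in the term $p\,\d a_i\,(T^{(s+1)})^{\phi^i}$.
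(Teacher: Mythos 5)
Your proof is correct and follows essentially the same route as the paper's: reduce to $j=1$, note $(T^{(s)})^{\phi^i}\in F^sB_{s+i}$ so that the $C_p$-terms stay in the subring $F^sB_{r+s}$, use $\d(F^{s-1}B_{r+s})\subset F^sB_{r+s+1}$, and extract the leading term $a_i^p(T^{(s+1)})^{\phi^i}$ from the Leibniz rule. The only cosmetic difference is that the paper groups the Leibniz remainder as $\d a_i\cdot(T^{(s)})^{\phi^{i+1}}$ (using $b^p\d a+p\,\d a\,\d b=\d a\cdot\phi(b)$) whereas you bound the two cross-terms separately; both land in $F^sB_{r+s+1}$.
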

 
 \begin{proof}
 It is enough to check the above for $j=1$. Since $T^{(s)}\in B_s=F^sB_s$, by \ref{inclusions}, we have
 \begin{equation}
 \label{voce}
 (T^{(s)})^{\phi^i}\in F^sB_{s+i}
 \end{equation}
  for $0\leq i\leq r$ so all the terms in the right hand side of \ref{quasi} are in $F^sB_{s+r}$. In particular
 $$\d f\in \sum_{i=0}^r \d(a_i(T^{(s)})^{\phi^i}) +\d b_{s-1}+F^sB_{r+s}.$$
 Note that, by \ref{inclusions}, $\d b_{s-1}\in  F^sB_{r+s+1}$. On the other hand, using \ref{voce} and the fact that $\phi$ commutes with $\d$, we have:
 $$
 \begin{array}{rcl}
 \d(a_i(T^{(s)})^{\phi^i}) & = & a_i^p\cdot  \d((T^{(s)})^{\phi^i})+\d a_i\cdot  (T^{(s)})^{\phi^{i+1}}\\
 \ & \ & \ \\
 \ & \in & a_i^p\cdot (T^{(s+1)})^{\phi^i}+F^sB_{s+r+1},
 \end{array}
 $$
 which concludes the proof.
 \end{proof}
 
 \subsection{Prolongations of quasi-linear equations}
 Assume in what follows that $X$ is a smooth curve over $R$ and, as usual, 
 set $X^n:=J^n(X)$
  for $n\geq 0$. For simplicity, and in view of our applications, we will consider here an arithmetic differential equation
  $$Y_2\subset X^2$$
  which is quasi-linear of order $(1,1)$. Most of our analysis can be
   easily extended to the $(r,s)$ case.
  Recall, we defined the following sequence, which formally is a prolongation of $Y_2$,
$$...\ra Y_3 \ra Y_2\ra Y_1\ra Y_0$$
and induced $p$-derivations \ref{xz}
inducing a $p$-derivation $\d$ on 
$$\cO_{Y_{\infty}}:=\varinjlim \pi_{*}\cO_{Y_n}.$$
As usual we let 
$$\overline{Y_n}=Y_n\otimes_R k.$$
For $j\geq -2$ we let $\overline{Z}_{j+2}$ be the {\it closed scheme theoretic image} of the monomorphism
$$\eta:\overline{Y_{j+3}}\ra \overline{Y_{j+2}}.$$
That is we may view $\overline{Z}_{j+2}$ as the closed subscheme of $\overline{Y_{j+2}}$ defined by the ideal
$$\ker(\cO_{\overline{Y_{j+2}}}\ra \eta_*\cO_{\overline{Y_{j+3}}}).$$
 We get natural morphisms of schemes over $k$,
$$
\begin{array}{cccccccc}
 \ra & \overline{Z}_3 & \ra   & \overline{Z}_2  & \ra & \overline{Z}_1 & \ra & \overline{Z}_0\\
 \nearrow & \downarrow & \nearrow  &  \downarrow & \nearrow & \downarrow & \nearrow & || \\
 \ra & \overline{Y_3} & \ra   & \overline{Y_2}  & \ra & \overline{Y_1} & \ra & \overline{Y_0}\\
 \  & \downarrow & \   &  \downarrow & \  & || & \ & ||\\
 \ra & \overline{X^3} & \ra & \overline{X^2}  & \ra & \overline{X^1} & \ra & \overline{X^0}
\end{array}
$$
where the vertical arrows are closed immersions. Our next main object is to prove the following theorem. Recall, a map is called {\it purely inseparable} of the relative K\"{a}hler differentials vanish.
 
\begin{theorem}\label{thm1}

Assume $Y_2\subset X^2$ is a quasi-linear arithmetic differential equation of order $(1,1)$ on a smooth curve $X$ over $R$. The following hold for $j\geq -2$.

\begin{enumerate}
\item The formal schemes $Y_{j+2}$ are flat over $R$.
\item The maps $\overline{Z}_{j+3}\ra \overline{Z}_{j+2}$ are finite and flat of degree $p$.
\item $\overline{Z}_{j+3}$ is a Cartier divisor on $\overline{Y_{j+3}}$.
\item $\overline{Y_{j+3}}\simeq \overline{Z}_{j+2}\times_{\overline{X^{j+2}}}\overline{X^{j+3}}$ over 
$\overline{Z}_{j+2}$.
\item If $Y_2$ is non-degenerate the maps $\overline{Z}_{j+3}\ra \overline{Z}_{j+2}$  are \'{e}tale.
\item If $Y_2$ is non-degenerate the formal schemes $Y_{j+2}$ are $p$-smooth.
\item If $Y_2$ is totally degenerate the maps $\overline{Z}_{j+3}\ra \overline{Z}_{j+2}$ are purely inseparable.
\end{enumerate}
\end{theorem}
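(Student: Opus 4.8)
Since every one of the seven assertions is Zariski local on $X$ and is preserved by the gluing in the definition of quasi-linearity, I would first reduce to the case $X=\Spec A$ affine with an \'etale coordinate $T$ such that $Y_2\subset J^2(X)$ is cut out by a single quasi-linear element $f=a_0T'+a_1(T')^{\phi}+b_0$ with $a_0,a_1\in\widehat A$, $a_1\in\widehat A^{\times}$ and $b_0\in F^0B_2$; here $B_n=\cO(J^n(X))=\widehat A[T',\dots,T^{(n)}]^{\h}$ and $\overline{B_n}=\overline A[T',\dots,T^{(n)}]$ with $\overline A$ smooth over $k$ (the cases $j=-2,-1$ are trivial, since $Y_0=X^0$ and $Y_1=X^1$ are smooth over $R$). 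Everything then hinges on one explicit description of the reductions modulo $p$, and that description is where I expect all the content to lie.

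The plan for it is as follows. By Lemma~\ref{heredity}, for $j\ge0$ one has $\d^jf=a_1^{p^j}(T^{(j+1)})^p+a_0^{p^j}T^{(j+1)}+pa_1^{p^j}T^{(j+2)}+b_j$ with $b_j\in F^jB_{j+2}\subset B_j+pB_{j+1}+p^2B_{j+2}$, so reducing mod $p$ (and using that $\overline{B_j}=\overline A[T',\dots,T^{(j)}]$ injects into $\overline{B_{j+2}}$) gives
$$\overline{\d^jf}=\bar f_j:=\bar a_1^{p^j}(T^{(j+1)})^p+\bar a_0^{p^j}T^{(j+1)}+\bar b_j,\qquad \bar b_j\in\overline A[T',\dots,T^{(j)}].$$
Thus each $\bar f_j$ is the unit $\bar a_1^{p^j}\in\overline A^{\times}$ times a monic polynomial of degree $p$ in the single fresh variable $T^{(j+1)}$. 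I would set $C_0:=\overline A$ and $C_{k+1}:=C_k[T^{(k+1)}]/(\bar f_k)$, so each $C_{k+1}$ is finite free of rank $p$ over $C_k$, and then show, by keeping track of which variable occurs in which relation, that for all $n\ge1$
$$\overline{Y_n}=\Spec\big(C_{n-1}[T^{(n)}]\big),\qquad \overline{Z}_n=\Spec C_n$$
(and $\overline{Y_0}=\overline{Z}_0=\Spec C_0$), where $\overline{Z}_n\hookrightarrow\overline{Y_n}$ is the vanishing locus of the single nonzerodivisor $\bar f_{n-1}$, the map $\overline{Z}_{n+1}\to\overline{Z}_n$ is the inclusion $C_n\hookrightarrow C_{n+1}$, and $\overline{Y_{n+1}}\to\overline{Z}_n$ is induced by the structural map $C_n\hookrightarrow C_n[T^{(n+1)}]$.

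Granting this, assertions (2)--(4) are immediate: (2) is that $C_{j+2}\hookrightarrow C_{j+3}=C_{j+2}[T^{(j+3)}]/(\bar f_{j+2})$ is finite free of rank $p$; (3) is that the ideal of $\overline{Z}_{j+3}$ in $\overline{Y_{j+3}}=\Spec C_{j+2}[T^{(j+3)}]$ is the invertible ideal $(\bar f_{j+2})$, $\bar f_{j+2}$ being a nonzerodivisor as a unit times a monic polynomial; and (4) is the base-change identity $C_{j+2}\otimes_{\overline A[T',\dots,T^{(j+2)}]}\overline A[T',\dots,T^{(j+3)}]=C_{j+2}[T^{(j+3)}]=\cO(\overline{Y_{j+3}})$, compatible with the projections to $\overline{Z}_{j+2}=\Spec C_{j+2}$. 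For (1), which I would establish first since (5) and (6) rely on it: from the above, $\bar f_0,\dots,\bar f_{n-2}$ is a regular sequence in $\overline{B_n}$ (each $\bar f_i$ is, up to a unit, monic in the fresh variable $T^{(i+1)}$, hence a nonzerodivisor in $\overline{B_n}/(\bar f_0,\dots,\bar f_{i-1})\cong C_i[T^{(i+1)},\dots,T^{(n)}]$); since $p$ is a nonzerodivisor on $B_n$, this upgrades to $p,f,\d f,\dots,\d^{n-2}f$ being a regular sequence in $B_n$ ($n=j+2$), and localizing at any maximal ideal containing $p$ and permuting (regular sequences can be reordered in a Noetherian local ring) shows $p$ is a nonzerodivisor on $B_n/(f,\d f,\dots,\d^{n-2}f)$, i.e.\ $Y_{j+2}$ is flat over $R$. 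For (5): $\Omega_{C_{k+1}/C_k}=C_{k+1}\,dT^{(k+1)}/(d\bar f_k)$ and, in characteristic $p$, $d\bar f_k=\bar a_0^{p^k}\,dT^{(k+1)}$; in the non-degenerate case $\bar a_0\in\overline A^{\times}$, so $\Omega_{C_{k+1}/C_k}=0$, and being finite flat as well, $C_{k+1}/C_k$ is \'etale. Then (6) follows by iterating (5) down to $C_0=\overline A$, which is $k$-smooth, so $\overline{Y_{j+2}}=\Spec C_{j+1}[T^{(j+2)}]$ is $k$-smooth, which with (1) gives that $Y_{j+2}$ is $p$-smooth. For (7): if $Y_2$ is totally degenerate then $\bar a_0=0$, so $\bar f_k=\bar a_1^{p^k}(T^{(k+1)})^p+\bar b_k$ has no linear term, and the fibre of $\overline{Z}_{k+1}\to\overline{Z}_k$ over a point $\mathfrak q$ is $\Spec\big(\kappa(\mathfrak q)[T]/(T^p+c)\big)$, which in characteristic $p$ is local with residue field purely inseparable over $\kappa(\mathfrak q)$ of degree $1$ or $p$; hence $\overline{Z}_{k+1}\to\overline{Z}_k$ is purely inseparable.

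The main obstacle, as indicated, is the structural computation in the second paragraph: correctly identifying $\overline{Y_n}$ and $\overline{Z}_n$ with the tower $C_0\subset C_1\subset\cdots$ and verifying that the transition and structural maps are the stated ones. After that the only step needing genuine commutative-algebra input is the flatness in (1) — the lift of the mod-$p$ regular-sequence structure to $B_n$ — and I do not anticipate further difficulty there.
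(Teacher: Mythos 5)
Your proposal is correct and follows essentially the same route as the paper: reduce to an affine chart with étale coordinate, use Lemma \ref{heredity} to see that each $\overline{f_j}$ is a unit times a monic degree-$p$ polynomial in the fresh variable $T^{(j+1)}$ (your tower $C_k$ is exactly the paper's $\overline{D}_k$ from Lemma \ref{snapper}), and deduce flatness from the regular-sequence/localize-and-permute argument of Lemma \ref{wither}. The remaining assertions then follow from the explicit presentation exactly as you describe, so there is nothing to add.
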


Before turning to the proof explore some consequences.

\subsection{Perfectoid consequences}

Later in the section, we will prove a proof of the following theorem.

\begin{theorem} \label{popi}
Assume $Y_2\subset X^2$ is a non-degenerate quasi-linear arithmetic differential equation 
of order $(1,1)$ on a smooth curve $X$ over $R$ and let $X=\cup X_i$ be an open affine cover. Let $C_{\infty,i}:=H^0(X_i,\cO_{Y_{\infty}})$. One has that
\begin{enumerate}
\item $C_{\infty,i}$ are $p$-torsion free. In particular, $\widehat{C_{\infty,i}}$ are $p$-torsion free.

\item $\overline{C_{\infty,i}}$  are 
integral and ind-\'{e}tale over $\cO(\overline{X_i})$. In particular, they are reduced
 and integrally closed in their total ring of fractions.
 \end{enumerate}
\end{theorem}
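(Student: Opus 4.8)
The plan is to deduce Theorem~\ref{popi} from Theorem~\ref{thm1}, working locally on one affine piece $X_i=\Spec A$ (so I drop the subscript $i$ and write $A=\cO(X_i)$, $B_n=\cO(J^n(X_i))$, and $C_\infty=H^0(X_i,\cO_{Y_\infty})=\varinjlim_n \cO(Y_n)$). First I would observe that $C_\infty$ is the direct limit of the rings $\cO(Y_n)$ along the transition maps induced by $\d$, and that reduction mod $p$ commutes with this direct limit, so $\overline{C_\infty}=\varinjlim_n \cO(\overline{Y_n})$. The key identification is that the image of $\cO(\overline{Y_n})$ in $\cO(\overline{Y_{n+1}})$ — equivalently the scheme-theoretic image — is exactly $\cO(\overline{Z}_n)$ in the notation of Theorem~\ref{thm1}; hence, modulo the nilpotents that may live in the non-reduced rings $\cO(\overline{Y_n})$, the colimit $\overline{C_\infty}$ is computed by the tower $\cdots\to \overline{Z}_3\to\overline{Z}_2\to\overline{Z}_1\to\overline{Z}_0=\overline{X}$. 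Thus I should first reduce to showing that $\overline{C_\infty}$ agrees with $\varinjlim_n\cO(\overline{Z}_n)$: any element of $\cO(\overline{Y_n})$ that dies in some later $\cO(\overline{Y_{n+j}})$ becomes $0$ in the colimit, and what survives is precisely the image in $\cO(\overline{Z}_n)$, so the colimit over the $Y_n$-tower equals the colimit over the $Z_n$-tower. (This is the step I expect to require the most care — making precise the claim that passing to scheme-theoretic images does not change the colimit, and handling the possible non-reducedness of individual $\cO(\overline{Y_n})$.)

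Granting that reduction, assertion (2) follows directly: by Theorem~\ref{thm1}(5), in the non-degenerate case each map $\overline{Z}_{n+1}\to\overline{Z}_n$ is finite étale of degree $p$, and $\overline{Z}_0=\overline{X}$ is smooth over $k$, hence integral (after shrinking $X$ to a connected/irreducible affine, which is harmless for the statement). An ascending union of finite étale extensions of a domain is, by our definition in Section~2, ind-étale over $\cO(\overline{X})$, and a filtered colimit of domains along injective ring maps is a domain; being a filtered colimit of normal (Noetherian, integrally closed) domains it is integrally closed in its fraction field, which is the total ring of fractions here since it is a domain. Since each $\overline{Z}_{n+1}\to\overline{Z}_n$ is étale it is in particular reduced, and a filtered colimit of reduced rings is reduced — this also reproves reducedness independently. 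So $\overline{C_\infty}$ is integral, ind-étale over $\cO(\overline{X})$, reduced, and integrally closed in its total ring of fractions, which is assertion (2).

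For assertion (1), I would argue that $C_\infty$ is $p$-torsion free because it is a filtered colimit of the rings $\cO(Y_n)=H^0(X_i,\cO_{Y_n})$, and by Theorem~\ref{thm1}(1) each $Y_{n}$ is flat over $R$, i.e.\ $\cO(Y_n)$ is $p$-torsion free; a filtered colimit of $p$-torsion-free rings is $p$-torsion free (if $p\cdot[a,m]=0$ in the colimit then $p\cdot a$ maps to $0$ at some finite stage, where $p$-torsion-freeness forces the image of $a$ to be $0$). Finally, $p$-torsion-freeness passes to $p$-adic completion: $\widehat{C_\infty}$ is $p$-torsion free because $C_\infty$ is and $\widehat{C_\infty}/p^m=C_\infty/p^m$ with the inverse system having surjective, hence Mittag--Leffler, transition maps, so $p\colon\widehat{C_\infty}\to\widehat{C_\infty}$ is injective. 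This completes the proof.

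\textbf{Remark on the obstacle.} The only genuinely delicate point is the interplay between the scheme-theoretic-image construction of $\overline{Z}_n$ and the colimit: one must check that killing the part of $\cO(\overline{Y_n})$ that becomes zero downstream is exactly the passage to $\cO(\overline{Z}_n)$, uniformly in $n$, so that $\varinjlim_n\cO(\overline{Y_n})=\varinjlim_n\cO(\overline{Z}_n)$ as rings. Everything else is a formal consequence of Theorem~\ref{thm1} together with the elementary permanence properties of filtered colimits and $p$-adic completion recalled in Section~2.
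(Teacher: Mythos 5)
Your proof is correct and follows essentially the same route as the paper: reduce to one affine piece, note $\overline{C_{\infty}}=\varinjlim \cO(\overline{Y_n})=\varinjlim \cO(\overline{Z}_n)$ (the step you flag as delicate is the standard fact that a sequential colimit agrees with the colimit of the images of its transition maps, which the paper uses without comment), and then invoke Theorem~\ref{thm1} for flatness and \'{e}taleness. The only caveat is that ``integral'' in the statement means ``integral ring extension over $\cO(\overline{X_i})$'' (an ascending union of finite extensions), not ``integral domain,'' so no shrinking to an irreducible piece is needed; your argument yields the required conclusions either way.
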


Note also that $C_{\infty,i}$ and hence $\widehat{C_{\infty,i}}$  carry (compatible) Frobenius lifts $\phi$; coming from the universal $p$-derivations. Recall that for a scheme with a  principal cover $X=\cup X_i$  there exist
$s_{ij}\in \cO(X_i)$ such that 
$\cO(X_i\cap X_j)=\cO(X_i)_{s_{ij}}$.

Combining Theorems \ref{r1} and \ref{popi}, plus Remark \ref{d1}, we have the following consequence. 

\begin{corollary}\label{argue}
Let $Y_2\subset X^2$ be as in Theorem \ref{popi} and let $X=\cup X_i$ be a principal  cover as in \ref{principalcover}. 
The ${\mathbb K}$-algebras $$P_i:=P(\widehat{C_{\infty,i}},\phi)$$ are perfectoid, equipped with relative Frobenius lifts  and we have isomorphisms
$$((P^\circ_i)_{s_{ij}})^{\widehat{\ }}\simeq ((P^\circ_j)_{s_{ji}})^{\widehat{\ }}$$
compatible with the relative Frobenius lifts and 
satisfying the obvious cocycle condition. Moreover, for each $i$,
$P^\circ_i/{\mathbb K}^{\circ \circ}P^\circ_i$
is the perfection of an integral ind-\'{e}tale algebra
  over $\cO(\overline{X_i})$.
In particular the perfectoid spaces $\Spa(P_i,P_i^\circ)$
glue together to give a perfectoid space $P^{\infty}(Y_2)$ equipped with an invertiblle relative Frobenius lift plus a closed immersion  $P^{\infty}(Y_2)\ra P^{\infty}(X)$ compatible with the relative Frobenius lifts. Furthermore the schemes
$\Spec P_i^\circ/{\mathbb K}^{\circ \circ}P_i^\circ$ glue together to give a scheme 
$P^{\infty}(Y_2)_k$ over $k$
which is the perfection of a profinite pro-\'{e}tale cover of $\overline{X}$.
\end{corollary}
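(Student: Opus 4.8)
The plan is to derive the statement by combining Theorem~\ref{r1}, Theorem~\ref{popi} and Lemma~\ref{d1}, the only work beyond citation being the organization of the gluing. First I would check that Theorem~\ref{popi} supplies exactly the hypotheses needed to apply Theorem~\ref{r1} to each ring $\widehat{C_{\infty,i}}$: by part~1 of Theorem~\ref{popi}, $C_{\infty,i}$ is $p$-torsion free, hence so is $\widehat{C_{\infty,i}}$, which is moreover $p$-adically complete by construction and carries a Frobenius lift $\phi$ induced by the universal $p$-derivation; by part~2, $\widehat{C_{\infty,i}}/p\widehat{C_{\infty,i}}=\overline{C_{\infty,i}}$ is reduced. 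Theorem~\ref{r1} then yields that $P_i=P(\widehat{C_{\infty,i}},\phi)$ is perfectoid with $P_i^\circ=P^\circ(\widehat{C_{\infty,i}},\phi)$, that its induced relative Frobenius lift $\Phi$ is invertible (assertion~3, using that $\overline{C_{\infty,i}}$ is reduced), and that $P_i^\circ/{\mathbb K}^{\circ\circ}P_i^\circ\simeq(\overline{C_{\infty,i}})_{\text{perf}}$ (assertion~4); by part~2 of Theorem~\ref{popi} this last ring is the perfection of an integral ind-\'etale $\cO(\overline{X_i})$-algebra.

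Next I would assemble the gluing datum. Since $\cO_{Y_{\infty}}$ is the direct limit of the sheaves $\pi_*\cO_{Y_n}$, each of which restricts to a principal open as the corresponding localization, and since localization and $p$-adic completion commute with that direct limit, one obtains canonical $\phi$-equivariant identifications of $\bigl((C_{\infty,i})_{s_{ij}}\bigr)^{\h}$ and of $\bigl((C_{\infty,j})_{s_{ji}}\bigr)^{\h}$ with the completed section of $\cO_{Y_{\infty}}$ over $X_i\cap X_j$. Feeding these into Lemma~\ref{d1}, applied to $A=\widehat{C_{\infty,i}}$ and to $A=\widehat{C_{\infty,j}}$, converts them into canonical isomorphisms $((P_i^\circ)_{s_{ij}})^{\h}\simeq((P_j^\circ)_{s_{ji}})^{\h}$ compatible with the relative Frobenius lifts; the cocycle condition holds on the nose because over each triple overlap every identification in sight equals $P^\circ$ applied to the completed section of $\cO_{Y_{\infty}}$ there. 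I would then invoke the gluing procedure for perfectoid algebras recorded just after Lemma~\ref{d1} (legitimate since $X$ is quasi-compact, so finitely many $X_i$ suffice): the $\Spa(P_i,P_i^\circ)$ glue along the rational subsets of~\ref{mamama} into a perfectoid space $P^{\infty}(Y_2)$ with a principal cover, onto which the invertible $\Phi$'s descend, so $P^{\infty}(Y_2)\in{\mathcal P}_{\Phi}$.

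For the closed immersion I would use that $Y_2\subset J^2(X)$ is a closed subscheme: this produces compatible $\phi$-equivariant surjections $\cO(J^n(X_i))\twoheadrightarrow\cO_{Y_n}(X_i)$, hence, on passing to the limit over $n$ and completing (Lemma~\ref{l1}), a surjection $J^{\infty}(\cO(X_i))\twoheadrightarrow\widehat{C_{\infty,i}}$ of $\phi$-rings; since $P$ sends surjections to surjections (assertion~6 of Theorem~\ref{r1}) this gives surjections of perfectoid ${\mathbb K}^\circ$-algebras $P^{\infty}(\cO(X_i))^\circ\twoheadrightarrow P_i^\circ$, i.e.\ closed immersions of the associated affinoid perfectoid spaces, compatible with the gluing data of $P^{\infty}(X)$ and with $\Phi$, which glue to a closed immersion $P^{\infty}(Y_2)\ra P^{\infty}(X)$ in ${\mathcal P}_{\Phi}$. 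Finally, reducing modulo ${\mathbb K}^{\circ\circ}$, the schemes $\Spec P_i^\circ/{\mathbb K}^{\circ\circ}P_i^\circ=\Spec(\overline{C_{\infty,i}})_{\text{perf}}$ glue along the $s_{ij}$; perfection commuting with localization and with gluing, the result is the perfection of the $k$-scheme glued from the $\Spec\overline{C_{\infty,i}}$, which is a pro-\'etale cover of $\overline{X}$ by Theorem~\ref{popi} and is moreover \emph{profinite} because, by assertions~2 and~5 of Theorem~\ref{thm1}, $\overline{C_{\infty,i}}=\varinjlim_n\cO_{\overline{Z}_n}(X_i)$ is an increasing union of finite \'etale $\cO(\overline{X_i})$-subalgebras whose transition maps have degree $p$.

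I expect the only real friction to be this last bit of bookkeeping in the gluing step, namely the interchange of localizations, $p$-adic completions and the colimit defining $\cO_{Y_{\infty}}$ that is needed so that Lemma~\ref{d1} applies verbatim and the cocycle identities are exact, together with the observation (via Theorem~\ref{thm1}) that the mod-$p$ cover is not merely ind-\'etale but profinite; everything else is a direct appeal to the theorems already established.
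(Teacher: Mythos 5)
Your proposal is correct and follows essentially the same route as the paper, which simply declares the corollary to be a consequence of Theorems \ref{r1} and \ref{popi} together with Lemma \ref{d1}; you have merely filled in the bookkeeping (verifying the hypotheses of Theorem \ref{r1} via Theorem \ref{popi}, using assertion 6 of Theorem \ref{r1} for the closed immersion, and Theorem \ref{thm1} for the profinite pro-\'{e}tale claim) that the paper leaves implicit.
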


The scheme $P^{\infty}(Y_2)_k$ is, of course,  the reduction mod ${\mathbb K}^{\circ \circ}$ of $P^{\infty}(Y_2)$ defined by the induced principal cover of the latter.

\subsection{Proofs} For the proof of Theorem \ref{thm1} we may assume $X$ is affine
and possesses an \'{e}tale coordinate $T\in A:=\cO(X)$. We assume the notation in subsection \ref{Qsection}. In particular $B_n$ is given by Equation \ref{Bn}. Let $f_j$ be as in Equation \ref{quasi} and 
let $\overline{f_j}\in \overline{A}[T',...,T^{(j+2)}]$ be its image. Then
$$\overline{f_j}= \overline{a_1}^{p^j}\cdot (T^{(j+1)})^p+
\overline{a_0}^{p^j}\cdot T^{(j+1)}
+\overline{b_j},\ \ \overline{b_j}\in \overline{A}[T',...,T^{(j)}],$$
so $\overline{f_j}$ is, up to a multiplication by an invertible element $\overline{a_1}$ in $\overline{A}$, a monic polynomial in $T^{(j+1)}$ of degree $p$ with coefficients in
$\overline{A}[T',...,T^{(j)}]$.
Moreover
\begin{equation}\label{dd}
\frac{d\overline{f_j}}{d T^{(j+1)}}=\overline{a_0}.\end{equation}
Setting
$$C_n=\cO(Y_n),\ \ \overline{D}_n:=\cO(\overline{Z}_n),$$
we have, for $j\geq 0$,
\begin{equation}
\label{docolo}
C_{j+2}= \frac{\widehat{A}[T',...,T^{(j+2)}]^{\widehat{\ }}}{(f_0,...,f_j)},\end{equation}
Also
$$\overline{C_1} = \overline{B_1}=\overline{A}[T'],\ \ \overline{C_0}=\overline{B_0}=\overline{D_0}=\overline{A}.$$
We conclude the following. 

\begin{lemma} \label{snapper} \ 

For $j\geq 0$ we have:
$$
\overline{C_{j+2}} = \frac{\overline{A}[T',...,T^{(j+1)}]}{(\overline{f_0},...,\overline{f_j})}
[T^{(j+2)}].$$ 

For $j\geq -1$ we have
$$\overline{D_{j+2}}=\text{Im}(\overline{C_{j+2}} \ra \overline{C_{j+3}})=
\frac{\overline{A}[T',...,T^{(j+2)}]}{(\overline{f_0},...,\overline{f_{j+1}})}.$$ 
\end{lemma}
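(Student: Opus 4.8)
The plan is to read both identities directly off the explicit presentation \ref{docolo} of $C_{j+2}$, combined with the shape of the reductions $\overline{f_i}$ recorded just above the lemma: by Lemma \ref{heredity} one has $\overline{f_i}\in\overline{A}[T',\dots,T^{(i+1)}]$ for every $i\geq 0$, so $\overline{f_i}$ involves none of the jet variables $T^{(i+2)},T^{(i+3)},\dots$; in particular the variable $T^{(j+2)}$ occurs in none of $\overline{f_0},\dots,\overline{f_j}$. Reducing \ref{docolo} modulo $p$ gives $\overline{C_{j+2}}=\overline{A}[T',\dots,T^{(j+2)}]/(\overline{f_0},\dots,\overline{f_j})$, and since the defining ideal is generated by elements free of $T^{(j+2)}$, that variable splits off as an ordinary polynomial variable, which is exactly the first formula $\overline{C_{j+2}}=\big(\overline{A}[T',\dots,T^{(j+1)}]/(\overline{f_0},\dots,\overline{f_j})\big)[T^{(j+2)}]$.

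For the second formula, first recall that $\overline{Z}_{j+2}$ was defined as the closed scheme-theoretic image of $\eta:\overline{Y_{j+3}}\to\overline{Y_{j+2}}$, so in the present affine situation $\overline{D_{j+2}}=\cO(\overline{Z}_{j+2})$ is by construction $\overline{C_{j+2}}/\ker(\overline{C_{j+2}}\to\overline{C_{j+3}})$, i.e. the image of the map $\overline{C_{j+2}}\to\overline{C_{j+3}}$; this is the middle equality, essentially by definition of scheme-theoretic image. To identify that image I would use the first formula twice: it shows that $\overline{C_{j+2}}$ is generated as an $\overline{A}$-algebra by $T',\dots,T^{(j+2)}$, and that $\overline{C_{j+3}}=\big(\overline{A}[T',\dots,T^{(j+2)}]/(\overline{f_0},\dots,\overline{f_{j+1}})\big)[T^{(j+3)}]$. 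Hence the image of $\overline{C_{j+2}}\to\overline{C_{j+3}}$ equals the image of the composite $\overline{A}[T',\dots,T^{(j+2)}]\to\overline{C_{j+3}}$, which factors as the quotient map modulo $(\overline{f_0},\dots,\overline{f_{j+1}})$ followed by the injective inclusion into the polynomial ring in the variable $T^{(j+3)}$; so the image is $\overline{A}[T',\dots,T^{(j+2)}]/(\overline{f_0},\dots,\overline{f_{j+1}})$, as claimed. The case $j=-1$ is handled by the same argument using the directly available identity $\overline{C_1}=\overline{A}[T']$ in place of the first formula, which is only stated for $j\geq 0$.

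There is no genuinely hard step: the lemma is a straightforward unwinding of \ref{docolo} and Lemma \ref{heredity}. The only points requiring a little care are the variable bookkeeping — confirming that each $\overline{f_i}$ does involve its top variable $T^{(i+1)}$, so that up to the unit $\overline{a_1}^{p^i}$ it is monic of degree $p$ in $T^{(i+1)}$ with coefficients in $\overline{A}[T',\dots,T^{(i)}]$ (a fact used further downstream for the finiteness, flatness and Cartier-divisor assertions of Theorem \ref{thm1}), while involving no higher jet variable — and the elementary fact that $IS[x]\cap S=I$ for a polynomial ring $S[x]$ over a subring $S$ and an ideal $I\subseteq S$; this last is what pins down the kernel of the composite in the previous paragraph to be exactly $(\overline{f_0},\dots,\overline{f_{j+1}})$ and nothing larger.
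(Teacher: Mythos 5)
Your proposal is correct and matches the paper's (implicit) argument: the paper states Lemma \ref{snapper} as an immediate consequence of the presentation \ref{docolo} together with the displayed fact that each $\overline{f_i}$ is, up to the unit $\overline{a_1}^{p^i}$, monic of degree $p$ in $T^{(i+1)}$ with coefficients in $\overline{A}[T',\dots,T^{(i)}]$, and your write-up simply spells out that unwinding (splitting off the unused top variable, and identifying the scheme-theoretic image as the intermediate quotient in a surjection followed by an injection). Your separate treatment of the $j=-1$ case via $\overline{C_1}=\overline{A}[T']$ is exactly the right bookkeeping.
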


It is also useful to record the following lemma. 

\begin{lemma}\label{wither}\

\begin{enumerate}
\item For any ring $B$ and any monic polynomial $f\in B[T]$ in one variable $T$, $f$ is a non-zero-divisor in $B[T]$; also any non-zero-divisor in $B$ is a non-zero-divisor in $B[T]$.

\item Assume $B$ is a Noetherian $p$-adically complete ring and let $p,b_0,...,b_n$ be a regular sequence in $B$. Then the ring 
$B/(b_0,...,b_n)$ is $p$-torsion free.
\end{enumerate}
\end{lemma}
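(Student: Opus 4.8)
The plan is to handle the two parts of the lemma separately; both are elementary, and the whole statement reduces to bookkeeping with leading coefficients and with the order of a regular sequence.

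For part 1 I would run a leading-coefficient argument. Let $f\in B[T]$ be monic of degree $d$ and suppose, for contradiction, that $fg=0$ for some nonzero $g=\sum_j a_jT^j\in B[T]$; let $m$ be the largest index with $a_m\neq 0$. The coefficient of $T^{m+d}$ in $fg$ is then exactly $a_m$, because the only pair $(i,j)$ with $i+j=m+d$, $0\le i\le d$, $0\le j\le m$ is $(d,m)$, which contributes $1\cdot a_m$. Hence $a_m=0$, a contradiction, so $g=0$ and $f$ is a non-zero-divisor in $B[T]$. The second assertion of part 1 is immediate: if $b\in B$ is a non-zero-divisor and $bg=0$ with $g=\sum_j a_jT^j\in B[T]$, then $ba_j=0$ for every $j$, so every $a_j=0$ and $g=0$.

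For part 2 I would induct on $n$, taking care never to permute the regular sequence $p,b_0,\ldots,b_n$ (a permutation of a regular sequence need not remain regular over a non-local ring). Write $J=(b_0,\ldots,b_{n-1})$ and $I=J+(b_n)$. The prefix $p,b_0,\ldots,b_{n-1}$ is again a regular sequence, so by the inductive hypothesis $B/J$ is $p$-torsion free; for $n=0$ this is just the assertion that $B$ is $p$-torsion free, which holds because $p$ is a non-zero-divisor on $B$. Also $b_n$ is a non-zero-divisor on $B/(pB+J)$ by hypothesis. Now take $x\in B$ with $px\in I$ and write $px=w+b_ny$ with $w\in J$. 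Then $b_ny=px-w\in pB+J$, so $y\in pB+J$; writing $y=pz+w'$ with $w'\in J$ gives $p(x-b_nz)=w+b_nw'\in J$, and $p$-torsion freeness of $B/J$ forces $x-b_nz\in J$, that is, $x\in I$. Hence $B/I=B/(b_0,\ldots,b_n)$ is $p$-torsion free.

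The lemma is routine, so there is no substantial obstacle; the only spot where a wrong move is tempting is in part 2, where one must resist sliding $p$ to the end of the regular sequence and instead run the induction as above. I note also that Noetherianity and $p$-adic completeness of $B$ play no role in the argument for part 2; they are included because they hold automatically in every situation where we apply the lemma, for instance to $B=\widehat{A}[T',\ldots,T^{(j+2)}]^{\widehat{\ }}$ with the $b_i$ taken to be the $f_i$.
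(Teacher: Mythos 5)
Your proof is correct, but for part 2 it takes a genuinely different route from the paper. The paper reduces to localizations at maximal ideals: since $B$ is $p$-adically complete, $p$ lies in the Jacobson radical, so every maximal ideal $M$ containing $b_0,\ldots,b_n$ also contains $p$; the sequence $p,b_0,\ldots,b_n$ remains regular in $B_M$ by flatness, and then the standard permutation theorem for regular sequences in Noetherian local rings (Matsumura) lets one move $p$ to the end, giving that $p$ is a non-zero-divisor on $B_M/(b_0,\ldots,b_n)$ for every such $M$. You instead avoid localization and permutation altogether via a direct induction on $n$, using only that $b_n$ is a non-zero-divisor modulo $(p,b_0,\ldots,b_{n-1})$ and the inductive $p$-torsion-freeness of $B/(b_0,\ldots,b_{n-1})$; the chain of substitutions in your inductive step is airtight. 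Your closing observation is also accurate: your argument uses neither Noetherianity nor $p$-adic completeness, whereas the paper's proof uses both (completeness to place $p$ in every relevant maximal ideal, Noetherian-local hypotheses for the permutation theorem). What your approach buys is greater generality and self-containedness; what the paper's buys is brevity by leaning on cited standard facts. Part 1 is handled the same way in spirit (the paper simply calls it trivial), and your leading-coefficient argument is a complete justification.
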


\begin{proof}
Assertion 1 is trivial. To check assertion 2 it is enough to show that the  localization of $B/(b_0,...,b_n)$ at any maximal ideal of this ring is $p$-torsion free. Let $M/(b_0,...,b_n)$ be such a maximal ideal, where $M$ is a maximal ideal of $B$ containing $b_0,...,b_n$. Since $B$ is $p$-adically complete we also have $p\in M$; cf. \cite{matsumura}, p. 57. 
By flatness, the images in $B_M$ of $p,b_0,...,b_n$ form a regular sequence in $B_M$. Since $B_M$ is Noetherian and local  the images in $B_M$ of $b_0,...,b_n,p$  still form a regular sequence; cf. \cite[pg. 127]{matsumura}. This ends our proof.
\end{proof}

We are ready to give the proof of Theorem~\ref{thm1}. 

\bigskip

{\it Proof of Theorem \ref{thm1}}.
For assertion 1 it is enough to check that the ring $$C_{j+2}= \frac{\widehat{A}[T',...,T^{(j+2)}]^{\widehat{\ }}}{(f_0,...,f_j)}$$ is $p$-torsion free. Since $\widehat{A}[T',...,T^{(j+2)}]^{\widehat{\ }}$ is $p$-adically complete and Noetherian, by Lemma \ref{wither}, assertion 2, and by Lemma \ref{wither}, it is enough to check that $\overline{f_0},...,\overline{f_j}$ form a regular sequence in $\overline{A}[T',...,T^{(j+2)}]$. But  $\overline{f_i}$ are monic polynomials
in $T^{(i+1)}$ with coefficients in $\overline{A}[T',...,T^{(i)}]$ and we may conclude by assertion 1 in Lemma \ref{wither}.
Assertion 2, 3, 4, 5, and 7 are clear from Lemma \ref{snapper} plus assertion 1 of Lemma \ref{wither}.
To check assertion 6 it is sufficient to check that $C_{j+2}/p^{\nu}C_{j+2}$ is smooth over 
$R/p^{\nu}R$ for all $\nu$. We know $C_{j+2}/p^{\nu}C_{j+2}$ is finitely generated and flat over 
$R/p^{\nu}R$ by assertion 1; so it is enough to show that $C_{j+2}/pC_{j+2}$ is smooth over 
$R/pR$, which follows, again, from  Lemma \ref{snapper}.
\qed

\bigskip

{\it Proof of Theorem \ref{popi}}. We may assume  $X=X_i$ and set $C_{\infty,i}=C_{\infty}$.
With $C_n=\cO(Y_n)$ and $\overline{D}_n=\cO(\overline{Z}_n)$ as in the proof of Theorem \ref{thm1} one has
$$C_{\infty}=\varinjlim C_n.$$
Since, by Theorem \ref{thm1}, $C_n$ are $p$-torsion free, $C_{\infty}$ is $p$-torsion free which proves assertion 1.
On the other hand,
$$\overline{C_{\infty}}=\varinjlim \overline{C_n}=\varinjlim \overline{D}_n.$$
By Theorem \ref{thm1},
 $\overline{D}_n$ are finite \'{e}tale over $\overline{D}_{n-1}$ so 
 $\overline{C_{\infty}}$ is integral and ind-\'{e}tale over $\cO(\overline{X_i})$.
 This ends the proof of assertion 2.
\qed

\section {Perfectoid spaces attached to arithmetic Manin kernels}

\subsection{Arithmetic Manin kernel}
Recall the terminology and notation from subsection \ref{deltacharacters}. In particular, let $E$ be an elliptic curve over $R$ not possessing a Frobenius lift and let $\psi$ be a basis  for the module of $\d$-characters.

\begin{definition}
The {\it arithmetic Manin kernel} is the arithmetic differential equation
$$Y_2:=Y_2(E):=\ker  \psi\subset J^2(E).$$\end{definition}

Clearly $Y_2(E)$ only depends on $E$ and not on the basis $\psi$.
By the universality property of the $p$-jet spaces one gets  canonical homomorphisms
$$\psi^j:J^{j+2}(E)\ra J^j({\mathbb G}_a)$$
for all $n\geq 0$, commuting in the obvious sense with the universal $p$-derivations.
The prolongation 
$$...\ra Y_j\ra ... \ra Y_2 \ra Y_1\ra Y_0$$
of $Y_2$ satisfies of course, 
$$Y_{j+2}=\ker \psi^j,\ \ \ \text{for}\ \ \ j\geq 0$$
hence they are groups in the category of $p$-adic formal schemes.
As before set $\overline{Z}_j$ be the closed scheme theoretic image of
$\overline{Y_{j+1}}\ra \overline{Y_j}$.
It is a general fact that scheme theoretic images of homomorphisms of group schemes of finite type over a field are closed subgroup schemes; in particular  $\overline{Z}_j$ are closed subgroup schemes of
$\overline{Y_j}$. 

\subsection{Perfectoid consequence} Our next aim is to prove the following theorem. 

\begin{theorem}\label{elli}
Let $E$ be an elliptic curve over $R$ not possessing a Frobenius lift and let $Y_2\subset J^2(E)$ be the arithmetic Manin kernel. 
\begin{enumerate}
\item The space $Y_2$ is quasi-linear of order $(1,1)$.

\item Assume either $E$ is superordinary or $E$ is ordinary and defined over $\bZ_p$. Then $Y_2$ is non-degenerate.

\item Assume $E$ is supersingular. Then $Y_2$ is totally degenerate.
\end{enumerate}
\end{theorem}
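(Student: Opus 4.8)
The plan is to reduce Theorem~\ref{elli} to an explicit local computation with the $\d$-character $\psi = \psi_{E,\omega}$, using the closed-form expression \ref{psi} together with the structure results recorded in Lemma~\ref{contrast}. First I would work on a fixed affine open set $X = \Spec A \subset E$ containing the origin, equipped with an \'etale coordinate $T$ normalized so that the origin is $(T)$, and recall that $\cO(J^n(X)) = B_n = \widehat A[T',\dots,T^{(n)}]^{\widehat{\ }} \subset S_n = R[[T]][T',\dots,T^{(n)}]^{\widehat{\ }}$. The arithmetic Manin kernel is $Y_2 = \ker\psi = \{\psi = 0\}$, so the ideal of $Y_2\cap J^2(X)$ in $B_2$ is generated by the single element $\psi\in B_2$, and I need to show this generator is quasi-linear of order $(1,1)$ in the sense of Definition~\ref{rs}.

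The heart of assertion~(1) is to expand $\psi = \frac{1}{p}\bigl(\ell(T)^{\phi^2} + \lambda_1 \ell(T)^{\phi} + p\lambda_0 \ell(T)\bigr)$ and extract the coefficient of $T^{(1)}$ at the top filtration level. Since $\ell(T) = \sum c_i T^i$ with $c_1 = 1$, one has $\phi(T) = T^p + pT'$ and hence $\ell(T)^\phi = \ell(T^p + pT') = \ell(T)^{(p)} + p\,\Omega(T)^{(p)}\,T' + (\text{higher order in }p)$, where $\Omega = d\ell/dT = \sum i c_i T^{i-1} \in \widehat A^\times$; and similarly $\ell(T)^{\phi^2}$ contributes, after dividing by $p$, a term $\lambda_1 \Omega^{(p)} T'$ at order involving $T'$ while the $\ell(T)^{\phi^2}$ piece contributes $p$-divisible terms in $T'$ and a term $(\Omega^{(p^2)})^p (T'')^p$-type expression in $T''$. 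The upshot is that, after dividing by $p$, $\psi$ takes the shape $a_1 (T^{(1)})^\phi$-free$\cdots$ — more precisely, recalling $s = r = 1$, I expect to land on
$$\psi = a_0 \cdot T' + a_1 \cdot (T')^{\phi} + b_0, \qquad a_1 \in \widehat A^\times,\ b_0 \in F^0 B_2 = B_2,$$
wait — rather the correct reading is $\psi \equiv a_1 (T^{(1)})^{\phi^{?}}$; the key point to nail down is that the coefficient $a_1$ of the ``$(T^{(s)})^{\phi}$'' term is a unit (coming from $\Omega^{p}$, a unit) while the coefficient $a_0$ of the ``$(T^{(s)})^{\phi^0} = T^{(s)}$'' term is, up to a unit, congruent to $\lambda_1 \Omega$ modulo $p$. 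I would carry this out carefully using the formulas in \cite[Prop.~3.13, Thm.~7.22]{equations} and the identity $\d T = T'$; the one genuine subtlety is bookkeeping which terms fall into the lower filtration piece $b_{s-1} \in F^{s-1}B_{r+s} = F^0 B_2$ — but since $F^0 B_2 = B_2$ the condition $b_0\in F^0 B_2$ is vacuous for $(r,s)=(1,1)$, so the filtration constraint only bites in the prolongations, which is exactly where Lemma~\ref{heredity} takes over.

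Granting the expansion, assertions~(2) and~(3) are immediate from the dictionary ``non-degenerate $\iff a_0\in\widehat A^\times$'' and ``totally degenerate $\iff a_0\in p\widehat A$'' combined with Lemma~\ref{contrast}: the constant $a_0$ is, up to the unit $\Omega$ (or a power of it), equal to $\lambda_1$ modulo units, so $a_0\in\widehat A^\times$ exactly when $\lambda_1\in R^\times$, which by Lemma~\ref{contrast}(1)–(2) holds when $E$ is superordinary or ordinary over $\bZ_p$, and $a_0\in p\widehat A$ exactly when $\lambda_1\in pR$, which by Lemma~\ref{contrast}(3) holds when $E$ is supersingular. I expect the main obstacle to be the $p$-adic expansion in step two — specifically, verifying that no hidden unit or sign spoils the claim that the ``linear in $T^{(s)}$, no Frobenius'' coefficient is $\lambda_1$ times a unit rather than, say, $\lambda_0$ times a unit; this requires being careful that the $p\lambda_0\ell(T)$ summand of \ref{psi}, after the division by $p$, contributes $\lambda_0 \Omega T'$ with a genuine unit coefficient $\lambda_0$ — but here Lemma~\ref{contrast}(1) tells us $\lambda_0(E,\omega)=1$ in the $\bZ_p$ case, and in general one must check $\lambda_0$ is still a unit (or absorb it), so the cleanest route is to observe that the $\phi^0$-coefficient is $\lambda_0\Omega$ while the $\phi^1$-coefficient is $\lambda_1\Omega^{p}$ — no wait, re-examining the orders of $\phi$ in \ref{psi}, the term $\ell(T)^{\phi^2}$ gives the $(T'')^p$-type leading behavior and the genuinely relevant ``order $(1,1)$'' reading comes from regarding the equation at level $2$ with $s=1$; I would sort this indexing out explicitly and then (2), (3) drop out with no further work.
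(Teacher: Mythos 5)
Your overall strategy — expand $\psi = \frac{1}{p}(\ell(T)^{\phi^2}+\lambda_1\ell(T)^{\phi}+p\lambda_0\ell(T))$ in the coordinates $T',T''$, identify the coefficients of $(T')^{\phi}$ and $T'$, and then invoke Lemma~\ref{contrast} — is exactly the paper's route. But there is a genuine gap at the center of your argument for assertion (1): you assert that $F^0B_2=B_2$, so that the condition $b_0\in F^{s-1}B_{r+s}=F^0B_2$ is ``vacuous'' for $(r,s)=(1,1)$. This is false. By \ref{Fib}, $F^0B_2=B_0+pB_1+p^2B_2$, a proper subring of $B_2=\widehat A[T',T'']^{\widehat{\ }}$ (for instance $T'\notin F^0B_2$). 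Verifying that the remainder of $\psi$ after subtracting $\Omega^{p^2}(T')^{\phi}+\lambda_1\Omega^p T'$ actually lands in $B_0+pB_1+p^2B_2$ is the entire content of the paper's Lemma~\ref{lem1}, and it requires the careful $p$-adic bookkeeping you skip: one expands $c_i^{\phi^2}(T^{\phi^2})^i$ using $T^{\phi^2}\in T^{p^2}+p(T')^p+p^2T''+p^2S_1$ and controls the binomial tails via $\frac{p^j}{j!}\in p^2R$ for $p\geq 5$, $j\geq 2$, then intersects back with $B_2$ using $(S_0+pS_1+p^2S_2)\cap B_2=B_0+pB_1+p^2B_2$. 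Without this, quasi-linearity is not established; indeed, if the constraint were vacuous the prolongation machinery of Theorem~\ref{thm1} (where $\overline{f_j}$ must be monic of degree $p$ in $T^{(j+1)}$ over $\overline A[T',\dots,T^{(j)}]$) would have no content.

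Two further points. First, you work only on the single affine open $X\subset E$ containing the origin, but quasi-linearity of the arithmetic differential equation $Y_2\subset J^2(E)$ requires generators on a Zariski cover of all of $E$; the paper covers $E$ by torsion translates $X+Q$ and uses the invariance of $\psi$ under translation by torsion points. Second, your indexing of the coefficients remains unresolved at the end (``the $\phi^0$-coefficient is $\lambda_0\Omega$ \dots no wait''). The correct statement, which you need for (2) and (3), is that the summand $\lambda_0\ell(T)$ contributes nothing to the $T'$-linear part (it lies in $S_0$ plus $p$-divisible terms), the summand $\frac{\lambda_1}{p}\ell(T)^{\phi}$ contributes $\lambda_1\Omega^p\cdot T'$, and $\frac{1}{p}\ell(T)^{\phi^2}$ contributes the leading unit term $\Omega^{p^2}\cdot(T')^{\phi}$; hence $a_0=\lambda_1\Omega^p$ and the dichotomy non-degenerate/totally degenerate is exactly $\lambda_1\in R^{\times}$ versus $\lambda_1\in pR$, at which point Lemma~\ref{contrast} finishes (2) and (3) as you intend.
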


Theorem \ref{elli} and Corollary \ref{argue} imply the following result.

\begin{corollary}\label{lll}
Let $E$ be an elliptic curve over $R$, not possessing a Frobenius lift, 
and assume either $E$ is superordinary or $E$ is ordinary and defined over $\bZ_p$.
Let $Y_2=Y_2(E)\subset J^2(E)$ be the arithmetic Manin kernel and let $X=\cup X_i$ be a principal   cover. Let $C_{\infty,i}:=H^0(X_i,\cO_{Y_{\infty}})$.
Then the $K$-algebras $P_i:=P(\widehat{C_{\infty,i}},\phi)$ are perfectoid, equipped with relative Frobenius lifts  and we have isomorphisms
$$((P^\circ_i)_{s_{ij}})^{\widehat{\ }}\simeq ((P^\circ_j)_{s_{ji}})^{\widehat{\ }}$$
compatible with the relative Frobenius lifts and 
satisfying the  cocycle condition. Moreover, for each $i$,  the ring
$P^\circ_i/{\mathbb K}^{\circ \circ}P^\circ_i$ is the perfection of an integral ind-\'{e}tale algebra over
 $\cO(\overline{X_i})$. In particular the perfectoid spaces $\Spa(P_i,P_i^\circ)$
glue together to give a perfectoid space $P^{\infty}(Y_2)$ equipped with an  invertible relative Frobenius lift plus a closed immersion $P^{\infty}(Y_2)\ra P^{\infty}(E)$ compatible with the relative Frobenius lifts. 
 Furthermore the schemes
$\Spec P_i^\circ/{\mathbb K}^{\circ \circ}P_i^\circ$ glue together to give a scheme $P^{\infty}(Y_2)_k$
over $k$ which is the perfection of a profinite pro-\'{e}tale cover of $\overline{E}$.
The above construction is functorial with respect to  \'{e}tale isogenies of degree not divisible by $p$ and also with respect to translations by torsion points.
\end{corollary}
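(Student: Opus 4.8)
The plan is to derive Corollary~\ref{lll} from Theorem~\ref{elli} and Corollary~\ref{argue} essentially by bookkeeping, once Theorem~\ref{elli} is in hand. First I would invoke part (1) of Theorem~\ref{elli} to know that $Y_2=Y_2(E)$ is a quasi-linear arithmetic differential equation of order $(1,1)$ on $E$, and parts (2) and (3) to know that under the hypothesis ``$E$ superordinary, or $E$ ordinary and defined over $\bZ_p$'' the equation $Y_2$ is in fact \emph{non-degenerate}. With non-degeneracy established, $Y_2$ falls under the hypotheses of Theorem~\ref{popi} and hence of Corollary~\ref{argue}, applied to the smooth curve $X=E$ (or to the affine open subset $X\subset E$ on which an \'etale coordinate exists; one covers $E$ by such opens). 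Corollary~\ref{argue} then immediately yields: the algebras $P_i=P(\widehat{C_{\infty,i}},\phi)$ are perfectoid with relative Frobenius lifts, the gluing isomorphisms $((P_i^\circ)_{s_{ij}})^{\widehat{\ }}\simeq((P_j^\circ)_{s_{ji}})^{\widehat{\ }}$ satisfy the cocycle condition, each $P_i^\circ/{\mathbb K}^{\circ\circ}P_i^\circ$ is the perfection of an integral ind-\'etale algebra over $\cO(\overline{X_i})$, the $\Spa(P_i,P_i^\circ)$ glue to a perfectoid space $P^\infty(Y_2)$ with invertible relative Frobenius and a closed immersion $P^\infty(Y_2)\ra P^\infty(E)$, and $P^\infty(Y_2)_k$ is the perfection of a profinite pro-\'etale cover of $\overline E$. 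So the bulk of the statement is a direct citation.

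The one genuinely new assertion in Corollary~\ref{lll} beyond Corollary~\ref{argue} is the last sentence: functoriality of $E\mapsto P^\infty(Y_2(E))$ with respect to \'etale isogenies of degree prime to $p$ and with respect to translations by torsion points. For isogenies $u:E\ra E'$ of degree prime to $p$, I would argue as follows. Such a $u$ induces $J^2(u):J^2(E)\ra J^2(E')$ compatibly with the universal $p$-derivations, and since $\psi_{E}$, $\psi_{E'}$ are bases for the (rank-one) modules of $\d$-characters and $u^*$ is an isomorphism on these modules after inverting the degree (which is a unit in $R$ since it is prime to $p$), one gets $\psi_{E'}\circ J^2(u)=c\cdot\psi_E$ for a unit $c\in R^\times$; hence $J^2(u)$ carries $Y_2(E)=\ker\psi_E$ into $Y_2(E')=\ker\psi_{E'}$, and likewise for the full prolongations $Y_{j+2}=\ker\psi^j$. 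This gives a morphism $\cO_{Y_\infty(E')}\ra\cO_{Y_\infty(E)}$ compatible with $\phi$, hence maps $\widehat{C_{\infty,i}'}\ra\widehat{C_{\infty,i}}$ and, applying the functor $P$ of Theorem~\ref{r1} (and Remark~\ref{d1} for the localizations), a morphism $P^\infty(Y_2(E))\ra P^\infty(Y_2(E'))$ in ${\mathcal P}_\Phi$. Functoriality (composition, identity) is then inherited from that of $J^\infty$, $P$, and the gluing. For translation by a torsion point $T\in E(R)$ one uses that translation $\tau_T:E\ra E$ is an isomorphism of $R$-schemes, so $J^2(\tau_T)$ is an isomorphism; although $\tau_T$ is not a group homomorphism, $\psi_E$ is a \emph{group} homomorphism and $\tau_T$ differs from the identity by $T$, so one checks $\psi_E\circ J^2(\tau_T)=\psi_E + \psi_E(J^2(T))$ where $J^2(T)$ is the (constant) $2$-jet of the torsion point; since $T$ is torsion and $\psi_E$ kills torsion (its kernel contains the torsion subgroup, $E$ having no Frobenius lift — this is the standard fact about arithmetic Manin kernels), $\psi_E(J^2(T))=0$, so $J^2(\tau_T)$ preserves $Y_2(E)$ and induces an automorphism of $P^\infty(Y_2(E))$ over $P^\infty(E)$.

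The main obstacle I anticipate is \emph{not} in the formal functoriality argument but in making the compatibility $\psi_{E'}\circ J^2(u)=c\cdot\psi_E$ (and the vanishing $\psi_E(J^2(T))=0$ on torsion) fully precise with the normalization conventions used for $\psi=\psi_{E,\omega}$ in Equation~\ref{psi}: the basis $\psi$ depends on the choice of invertible $1$-form $\omega$, and under an isogeny $u$ one has $u^*\omega'=c\,\omega$ for some $c$, so one must track how $c$ enters and check it is a $p$-adic unit precisely when $\deg u$ is prime to $p$. This is where one should cite the results of \cite{char, equations} on the behavior of $\d$-characters under isogenies and on their kernels containing torsion; the careful statement is that $\Hom_\d$-functoriality of $\psi$ together with $\deg u\in R^\times$ forces the induced map on arithmetic Manin kernels, and everything downstream (the perfectoid algebras, their reductions mod ${\mathbb K}^{\circ\circ}$, the pro-\'etale covers of $\overline E$) is then automatically functorial because each constituent functor — $J^\infty$, $\varinjlim_\phi$, $-\otimes_R{\mathbb K}^\circ$, $p$-adic completion, $\Spa$ — is. Once that single compatibility is nailed down, the rest is routine diagram-chasing of the kind already carried out in Remark~\ref{extensionn}.
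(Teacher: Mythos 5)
Your proposal is correct and follows the paper's own route exactly: the paper derives Corollary~\ref{lll} by combining Theorem~\ref{elli} (quasi-linearity, and non-degeneracy in the superordinary / ordinary-over-$\bZ_p$ cases) with Corollary~\ref{argue}, leaving the functoriality essentially as an assertion. Your fleshing-out of the functoriality — $\psi_{E'}\circ J^2(u)=c\,\psi_E$ with $c\in R^\times$ via the dual isogeny, and $\psi_E$ killing torsion since $R$ is torsion-free as an abelian group — is sound and in fact more detailed than the paper, which only records in passing that $\psi$ is invariant under translation by torsion points.
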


In order to obtain a functor one needs, as before, to attach to any $E$ a principal basis.
The scheme $P^{\infty}(Y_2)_k$ is the reduction mod ${\mathbb K}^{\circ \circ}$ of $P^{\infty}(Y_2)$ defined by the induced principal cover of the latter.
Theorem \ref{hoi} follows now from Corollary \ref{lll} and from our construction: one 
simply needs to take $P^{\sharp}(E)$ in Theorem \ref{hoi} to be equal to $P^{\infty}(Y_2(E))$ in Corollary \ref{lll}.

\begin{remark}
Let us record the following variant of Theorem \ref{elli}. Assume $E$ is an elliptic curve over $R$ not possessing a Frobenius lift, let 
$X\subset E$ be an open subset, let $g\in F^0B_2$, let 
$f=\psi+g$
and let $Y_2(f)\subset J^2(X)$ be the arithmetic differential equation defined by $f$.
Then,  by Remark \ref{fifi}, $Y_2(f)$ is quasi-linear of order $(1,1)$. If in addition 
 $E$ is either superordinary or  ordinary and defined over $\bZ_p$ then $Y_2(f)$ is non-degenerate. Examples of such $f$'s are given by the {\it arithmetic Painlev\'{e} equations}
 in \cite{BYM} where $g=\phi(u)$ for some $u\in \cO(X)$. Hence for such $f$'s the obvious analogue of Corollary \ref{lll} holds.
\end{remark}

We proceed now with preparations for the proof of Theorem \ref{elli}. Note that the curve $E$ can be covered by translates $$X+Q$$ where $Q$ are torsion points in $E(R)$ and  note that $\psi$ is invariant under translations by such $Q$'s. This shows that in order to prove  Theorem \ref{elli} it is enough to show that $\psi$, viewed as an element of $B_2=\widehat{A}[T',T'']^{\widehat{\ }}$, is quasi linear, non-degenerate if $E$ is superordinary or ordinary defined over $\bZ_p$, and totally degenerate if $E$ is supersingular.
For  the following let us  recall from \ref{Fib} the ring
$$F^0B_2:=B_0+pB_1+p^2B_2.$$ Set also $S_0 := R[[T]]$ and utilize similar notation for $S_n$ as for $B_n$. 

\begin{lemma}\label{lem1}
The following  holds in  $B_2$:
$$
\psi  \in  \Omega^{p^2}\cdot (T')^{\phi}+\lambda_1\Omega^p\cdot T'+ F^0B_2.$$
\end{lemma}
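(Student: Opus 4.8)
The plan is to unravel the explicit formula \eqref{psi} for $\psi$ and track everything modulo the ideal $F^0B_2 = B_0 + pB_1 + p^2B_2$. Recall that, as an element of $S_2 = R[[T]][T',T'']^{\widehat{\ }}$,
$$\psi = \tfrac{1}{p}\bigl(\ell(T)^{\phi^2} + \lambda_1\ell(T)^{\phi} + p\lambda_0\ell(T)\bigr).$$
First I would dispose of the last term: $p\lambda_0\ell(T) \in pS_0 \subset F^0S_2$ (and it actually lies in $pB_0 \subset F^0B_2$ since $\ell(T)$, while a priori only in $R[1/p][[T]]$, enters here multiplied by $p$ — one must check $p\lambda_0\ell(T) \in B_0$, which follows from $\ell(T)\in R[1/p][[T]]$ having bounded denominators killed by a single power of $p$ on an étale chart, or simply absorb it after the $\tfrac1p$ is distributed). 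So the whole content is in understanding $\tfrac1p\ell(T)^{\phi^2}$ and $\tfrac{\lambda_1}{p}\ell(T)^{\phi}$ modulo $F^0B_2$.

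The key computation is a ``Taylor expansion'' of $\ell(T)^{\phi}$ and $\ell(T)^{\phi^2}$ in the jet variables. Write $\phi(T) = T^p + pT'$, so $\ell(T)^\phi = \ell^\phi(T^p + pT')$ where $\ell^\phi$ is $\ell$ with Frobenius-twisted coefficients. Expanding in powers of $pT'$:
$$\ell^\phi(T^p+pT') = \ell^\phi(T^p) + p\,(\ell^\phi)'(T^p)\,T' + \tfrac{p^2}{2}(\ell^\phi)''(T^p)(T')^2 + \cdots,$$
and all terms from the quadratic one onward lie in $p^2 S_1 \subset F^0S_2$, while $\ell^\phi(T^p) \in S_0 \subset F^0S_2$. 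Hence $\tfrac1p\ell(T)^{\phi^2}$ contributes, modulo $F^0$, only the linear term $\tfrac1p \cdot p\,(\text{stuff})\cdot (T')^{\phi}$-type expression: one applies $\phi$ once more, $\ell(T)^{\phi^2} = (\ell^\phi(T^p+pT'))^\phi = \ell^{\phi^2}((T^p+pT')^p + p(T^p+pT')')$, and the leading jet-linear contribution in $T''$ (equivalently $(T')^\phi$, since $(T')^\phi = (T')^p + pT''$) is $p\,(\ell^{\phi^2})'((T^{p^2})\cdot\ldots)\cdot(\cdots)$; the coefficient is $(\ell^\phi)'$ evaluated appropriately, which reduces to $\Omega^{p^2}$ once one recalls $\Omega = \tfrac{d\ell}{dT}$ and $(\ell^\phi)'(T^p) \equiv \Omega^p \bmod p$ on the chart, so after the double Frobenius and the $\tfrac1p$ one is left with exactly $\Omega^{p^2}\cdot (T')^\phi$ modulo $F^0B_2$. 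Similarly $\tfrac{\lambda_1}{p}\ell(T)^\phi = \tfrac{\lambda_1}{p}\bigl(\ell^\phi(T^p) + p(\ell^\phi)'(T^p)T' + \cdots\bigr)$ contributes $\lambda_1\,(\ell^\phi)'(T^p)\cdot T' \equiv \lambda_1\Omega^p\cdot T' \bmod F^0B_2$ (the $\ell^\phi(T^p)/p$ term: one must check it lies in $F^0$, which needs $\ell^\phi(T^p) \in pB_0 + (\text{integral stuff})$; in fact since $\psi \in B_2$ by \eqref{psi}, the potential denominators are forced to cancel, so this is automatic).

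The main obstacle, and the step to be careful with, is \textbf{bookkeeping the denominators}: $\ell(T)$ a priori lives only in $R[1/p][[T]]$ (we know $c_1 = 1$ but $c_i$ for $i\ge 2$ may have $p$ in the denominator), so each of $\tfrac1p\ell(T)^{\phi^2}$ and $\tfrac{\lambda_1}{p}\ell(T)^\phi$ individually appears to have unbounded denominators, yet their combination $\psi$ is integral (lies in $B_2$). I would handle this by \emph{not} splitting $\psi$ additively in a way that breaks integrality; instead, one isolates the ``jet-linear top part'' honestly inside $S_2$, using the known fact (from \cite{equations}, the construction of $\psi_{E,\omega}$) that $\psi \in B_2 \subset S_2$, and shows that $\psi - \Omega^{p^2}(T')^\phi - \lambda_1\Omega^p T' \in S_0 + pS_1 + p^2S_2$, then intersects with $B_2$ — the filtration $F^\bullet B_2$ is cut out compatibly with $F^\bullet S_2$ on the étale chart, so $\psi - \Omega^{p^2}(T')^\phi - \lambda_1\Omega^p T' \in F^0B_2$. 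Concretely this amounts to writing $\psi$ in the coordinates $\widehat A[T',T'']^{\widehat{\ }}$ and reading off the coefficients of $(T'')$ and of $(T')$ versus the ``lower'' terms; the coefficient of $T''$ works out to $\Omega^{p^2}$ (after using $(T')^\phi = (T')^p + pT''$ and absorbing $(T')^p$-type and $p$-multiple terms into $F^0B_2$), and the coefficient of $T'$ not already absorbed works out to $\lambda_1\Omega^p$. I expect the verification that nothing integral is lost in the denominator-cancellation to be the only genuinely delicate point; everything else is a routine Frobenius/Taylor expansion.
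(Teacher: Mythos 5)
Your proposal follows essentially the same route as the paper's proof: expand $\ell(T)^{\phi^2}$ and $\ell(T)^{\phi}$ in $S_2$ around their pure-Frobenius parts, control the higher-order Taylor terms using the integrality of $\Omega=d\ell/dT$ (i.e.\ $ic_i\in R$) together with $p^j/j!\in p^2R$ for $j\geq 2$, deduce integrality of the residual power series $\psi_0$ from $\psi\in B_2$, and conclude via $(S_0+pS_1+p^2S_2)\cap B_2=F^0B_2$. The delicate points you flag (denominators of $c_i$, integrality of $\psi_0$) are exactly the ones the paper handles, and in the same way.
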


\begin{proof}
Note that in $S_2$ we have:
$$T^{\phi^2}\in T^{p^2}+p(T')^p+p^2T''+p^2S_1$$
hence using 
$$ic_i\in R\ \ \ \text{for}\ \ \ i\geq 1$$
 and 
$$\frac{p^{j}}{j!}\in p^2R\ \ \text{for}\ \ p\geq 5,\ \ j\geq 2$$
 we get, for $i\geq 1$,
$$\begin{array}{rcl}
c_i^{\phi^2}(T^{\phi^2})^i & \in & c_i^{\phi^2}T^{p^2i}+ic_i^{\phi^2}T^{p^2(i-1)}(p(T')^p+p^2T''+p^2S_1)+\\
\ & \ & \ \\
\ & \ & +\sum_{j\geq 2} \left(\begin{array}{c} i\\ j\end{array}\right)c_i^{\phi^2}(p^jS_1+p^{j+1}S_2)\\
\ & \ & \ \\
\ & \subset & c_i^{\phi^2}T^{p^2i}+ic_i^{\phi^2}T^{p^2(i-1)}(p(T')^p+p^2T'')+p^2S_1+p^3S_2.
\end{array}$$
Similarly we get
$$c_i^{\phi}(T^{\phi})^i\in c_i^{\phi}T^{pi}+ipc_i^{\phi}T^{p(i-1)}T'+p^2S_1.$$
Set 
$$\psi_0:=\frac{1}{p}\{\sum c_i^{\phi^2}T^{p^2i}+\lambda_1\sum c_i^{\phi}T^{pi}+p\lambda_0\sum c_iT^i\}\in R[1/p][[T]].$$
So, by equation \ref{psi}, we get
$$\begin{array}{rcl}
\psi & \in & \psi_0+pS_1+p^2S_2+\\
\ & \ & \ \\
\ & \ & +(\sum ic_i^{\phi^2}T^{p^2(i-1)})(T')^p+\lambda_1(\sum ic_i^{\phi}T^{p(i-1)})T'+p(\sum ic_i^{\phi^2}T^{p^2(i-1)})T''\\
\ & \ & \ \\
\ & = & \Omega^{p^2} \cdot (T')^{\phi}+\lambda_1\Omega^p\cdot T' + \psi_0+pS_1+p^2S_2.
\end{array}$$
In particular $\psi_0\in S_0=R[[T]]$.
On the other hand, since $p^kR[[T]]\cap A=p^kA$ we immediately deduce that
$$(S_0+pS_1+p^2S_2)\cap B_2=B_0+pB_1+p^2B_2.$$

\end{proof}

{\it Proof of Theorem~\ref{elli}}. Assertion $1$ in the theorem holds by Lemma~\ref{lem1}, i.e., $\psi\in B_2$ is quasi-linear. The assertions concerning non-degeneracy/total degeneracy follow now from  Lemma \ref{contrast}. \qed

\

\begin{remark}
In this section we investigated the kernel of the $\d$-characters $\psi$ of elliptic curves with no Frobenius lifts. One can ask for an analysis of the kernel of $\psi$ for elliptic curves $R$ that are canonical lifts or for ${\mathbb G}_m$. The analysis in these cases is much simpler, the $\d$-characters have order $1$ rather than order $2$, and we leave it to the reader.\end{remark}

\section {Perfectoid spaces attached to $\d$-isogeny classes}

\subsection{$\d$-period map}
We recall some concepts from~\cite{Barcau, difmod, equations,local}.
The ring of {\em $\d$-modular functions} 
is
\[
    M^r:=R[a_4^{(\leq r)},a_6^{(\leq r)}, \Delta^{-1}]\h,
\]
where
$$\Delta:=-2^6a_4^3-2^43^3a_6^2,$$
 $a_4^{(\leq r)}$ is a tuple of variables,
$(a_4,a'_4,a''_4,\ldots ,a_4^{(r)})$, and $a_6^{(\leq r)}$ is defined similarly.
If $G \in M^0 \setminus pM^0$, define
\[
    M^r_{\{G\}} := M^r[G^{-1}]\h
    = R[a_4^{(\leq r)},a_6^{(\leq r)}, \Delta^{-1},G^{-1}]\h.
\]
An element of $M^r$ or $M^r_{\{G\}}$ is {\em defined over $\bZ_p$}
if it belongs to the analogously defined ring with $\bZ_p$ in place of $R$.
We have natural $p$-derivations $\d\colon M^r \to M^{r+1}$
and $\d\colon M^r_{\{G\}} \to M^{r+1}_{\{G\}}$.
Let $$j:-2^{12}3^3 a_4^3/\Delta,\ \ \ i:=2^63^3-j,\ \ \ t:=a_6/a_4,\ \ \ T:=a_6^2/a_4^3.$$
By \cite[Prop. 3.10]{difmod} we have
$$M^r_{\{a_4a_6\}}
    = R[j^{(\leq n)},j^{-1}, i^{-1}, t^{(\leq r)}, t^{-1}]\h.$$
If $w=\sum n_i \phi^i \in \Z[\phi]$, define $\deg w = \sum n_i$.
If moreover $\lambda \in R$,
define the symbol $\lambda^w:=\prod (\lambda^{\phi^i})^{n_i}$.
For $w \in \Z[\phi]$,
say that $f$ in $M^r$ or $M^r_{\{G\}}$ is of {\em weight} $w$
if
\begin{equation}
\label{nu shtiu} f(\lambda^4a_4,\lambda^6a_6,\d(\lambda^4a_4),\d(
\lambda^6a_6),\ldots )=\lambda^w f(a_4,a_6,a'_4,a'_6,\ldots ),
\end{equation}
for all $\lambda \in R$.
Let $M^r(w)$ be the set of $f \in M^r$ of weight $w$,
and define $M^r_{\{G\}}(w)$ similarly.
In~\cite{difmod}, elements of $M^r_{\{G\}}(w)$ were called
{\em $\d$-modular forms of weight $w$} (holomorphic outside $G=0$).

If $f \in M^r_{\{G\}}(w)$ and $E$ is an elliptic curve given by
$y^2=x^3+Ax+B$ with $A,B \in R$ and $G(A,B) \in R^{\times}$,
then define $f(A,B) \in R$ by making the substitutions
$a_4 \mapsto A$, $a_6 \mapsto B$, $a'_4 \mapsto \d A$, $a_6' \mapsto \d B$,
$a''_4 \mapsto \d^2 A$, and so on.

A from $f$ is called {\em isogeny covariant} if for
any isogeny $u$ of degree prime to $p$ from an elliptic
curve $y^2=x^3+A_1 x+B_1$ with $G(A,B) \in R^{\times}$ to an
elliptic curve $y^2=x^3+A_2 x+B_2$ with $G(A_2,B_2) \in R^{\times}$
that pulls back $dx/y$ to $dx/y$ we have
\[
    f(A_1,B_1) = \deg(u)^{-\deg(w)/2} f(A_2,B_2).
\]

By~\cite[Cor. 3.11]{difmod},
$M^r_{\{a_4a_6\}}(0)=R[j^{(\leq r)}, j^{-1}, i^{-1}]\h$.
More generally, if $m \in 2\Z$ and $g \in M^0(m)$,
define $\tilde{g}:=gt^{-m/2}$; then
\begin{equation}
\label{urssu}
    M^r_{\{a_4a_6g\}}(0)
    = R[j^{(\leq r)}, j^{-1}, i^{-1},\tilde{g}^{-1}]\h.
\end{equation}
Also define the open subscheme
$$X:={\mathbb A}^1_{a_4a_6g}:=\text{Spec}\  R[j,j^{-1},i^{-1}, \tilde{g}^{-1}]$$
of the modular curve ${\mathbb A}^1:= \text{Spec}\  R[j]$, i.e.,  the ``$j$-line".
The geometric points of $X$ are given by the values of $j$ different from $0,1728$ and from the zeros of $g$.
We have
\begin{equation}
\label{E:definition of b}
    T=-2^23^{-3}+2^8j^{-1},\ \ \ \ R[j,j^{-1},i^{-1}]=R[T,T^{-1},(4+27T)^{-1}],\end{equation}
so $T$ is an \'etale coordinate on 
$X={\mathbb A}^1_{a_4a_6g}$,
so
\begin{equation}
\label{mazel} B_r:=\cO(J^r(X))=\widehat{A}[T',\ldots ,T^{(r)}]\h,\ \ \ A:=\cO(X),
\end{equation}
where $T',\ldots ,T^{(r)}$ are new indeterminates.
Similarly, since $j$ is an \'{e}tale coordinate on ${\mathbb A}^1$,
 \eqref{urssu} yields
 \begin{equation}
 \label{foxx}
B_r=M^r_{\{a_4a_6g\}}(0).
 \end{equation}
There is a natural {\it  $\d$-Fourier expansion maps}
\begin{equation}
\label{catt} B_r \to S^r_{\infty}:=R((q))\h[q',...,q^{(r)}]\h.
\end{equation}
Let $E_4(q)$ and $E_6(q)$ be the normalized Eisenstein series of
weights $4$ and $6$, where normalized means constant
coefficient equal to $1$.
We have unique ring homomorphisms, also
referred to as {\it $\d$-Fourier expansion maps} 
\begin{align}
\label{sqrl}
    M^r &\to S^r_{\infty} \\
\notag
    g &\mapsto g_{\infty}=g(q,q',\ldots ,q^{(r)}),
\end{align}
sending $a_4$ and $a_6$ to
$-2^{-4}3^{-1}E_4(q)$ and $2^{-5}3^{-3}E_6(q)$, respectively,
and commuting with $\d$. The maps \ref{catt} and \ref{sqrl} are compatible in the obvious sense.
Also recall that there exists a unique $E_{p-1} \in M^0(p-1)$
such that $E_{p-1}(q)$ is the normalized Eisenstein series of weight $p-1$.

By \cite[Construction (4.1) and Corollary 7.26 ]{difmod},
there exists a unique $f^1 \in M^1(-1-\phi)$, defined over $\Z_p$,
such that
\begin{equation}
\label{maus}
    f^1(q,q')
    =\frac{1}{p} \log \frac{q^{\phi}}{q^p}
    := \sum_{n \geq 1}
        (-1)^{n-1}n^{-1} p^{n-1} \left( \frac{q'}{q^p} \right)^n
    \in R((q))\h[q']\h.
\end{equation}
As explained in \cite[pg. 126--129]{difmod}
$f^1$ is isogeny covariant and may
be interpreted as an
{\em arithmetic Kodaira-Spencer class}. By \cite[Lem. 4.4]{char} the elliptic curve
$E$ defined by $y^2=x^3+Ax+B$ is superordinary if and only if 
$$E_{p-1}(A,B)f^1(A,B)\not\equiv 0\ \ \ \text{mod}\ \ \ p.$$

Define now
\begin{equation}
\label{fffut}
    t^{\frac{\phi+1}{2}}
    := t^{\frac{p+1}{2}} \left( \frac{t^{\phi}}{t^p} \right)^{1/2}
    = t^{\frac{p+1}{2}} \sum_{j \geq 0}
            \binom{1/2}{j} p^j \left( \frac{\d t}{t^p} \right)^j
     \in M^1_{\{a_4a_6\}}(1+\phi),
\end{equation}
\begin{equation}
\label{varfi}
    \varphic :=f^1 \cdot t^{\frac{\phi+1}{2}}
        \in M^1_{\{a_4a_6\}}(0) \subset M^1_{\{a_4a_6 g\}}(0) =\cO(J^1(X))=B_1.
\end{equation}
As explained in \cite[pg. 588]{local}, using \cite[Thm. 1.3]{H} one gets 
an equality of the form:
\begin{equation}
\label{lolla} \varphic=\alpha T' +\beta,\ \ \ \beta=\beta_0+p \beta_1,
\end{equation}
for some $\beta_0 \in M^0_{\{a_4a_6\}}(0)$
and $\beta_1 \in M^1_{\{a_4a_6\}}(0)$, and
$$\alpha=cE_{p-1}\Delta^{-p}a_4^{\frac{7p-1}{2}}a_6^{-\frac{p-1}{2}}\in M^0_{\{a_4a_6\}}(0),$$
where $c\in R^{\times}$. 

Form this point on we assume $g:=E_{p-1}$; in this case we have $\alpha\in \widehat{A}^{\times}$, hence:

\begin{lemma}\label{fflat}
For $X={\mathbb A}^1_{a_4a_6E_{p-1}}$  we have that 
$$f^{\flat}\in \alpha T' +B_0+pB_1.$$
\end{lemma}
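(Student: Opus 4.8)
The plan is to transport the structural identity~\ref{lolla} for $\varphic$ into $B_1=\cO(J^1(X))$ with $X={\mathbb A}^1_{a_4a_6E_{p-1}}$, and along the way to record that the coefficient $\alpha$ becomes a unit in $\widehat{A}=B_0$, which is what makes the resulting equation a non-degenerate quasi-linear one (hence amenable to the perfectoid machinery of Section~6).

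First I would spell out the effect of the specialization $g:=E_{p-1}$. Since $E_{p-1}\in M^0(p-1)$ and $p-1$ is even, Equation~\ref{urssu} together with Equation~\ref{foxx} gives $B_r=M^r_{\{a_4a_6E_{p-1}\}}(0)=R[j^{(\leq r)},j^{-1},i^{-1},\widetilde{E_{p-1}}^{-1}]\h$, where $\widetilde{E_{p-1}}=E_{p-1}t^{-(p-1)/2}$. The ring $M^r$ is a $p$-torsion free domain with $M^r/pM^r$ a domain (as $\overline{\Delta}\neq 0$ in characteristic $p\neq 2,3$), and $E_{p-1}\notin pM^0$, so inverting $E_{p-1}$ and then $p$-adically completing is injective; thus the natural maps $M^r_{\{a_4a_6\}}(0)\hookrightarrow M^r_{\{a_4a_6E_{p-1}\}}(0)=B_r$ are inclusions (these were already used implicitly in~\ref{varfi}). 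Applying these inclusions for $r=0,1$ to Equation~\ref{lolla}, in which $\alpha,\beta_0\in M^0_{\{a_4a_6\}}(0)$ and $\beta_1\in M^1_{\{a_4a_6\}}(0)$, yields $\varphic=\alpha T'+\beta_0+p\beta_1$ with $\beta_0\in B_0$ and $\beta_1\in B_1$, hence $\varphic\in\alpha T'+B_0+pB_1$, which is the assertion of the lemma.

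For the companion fact $\alpha\in\widehat{A}^{\times}$, which is the reason for passing to $X={\mathbb A}^1_{a_4a_6E_{p-1}}$, I would simply factor the explicit formula $\alpha=cE_{p-1}\Delta^{-p}a_4^{(7p-1)/2}a_6^{-(p-1)/2}$ recorded after Equation~\ref{lolla}. Since $t=a_6/a_4$, one has $\widetilde{E_{p-1}}=E_{p-1}a_4^{(p-1)/2}a_6^{-(p-1)/2}$, so $\alpha=c\,\widetilde{E_{p-1}}\,\Delta^{-p}a_4^{3p}=c\,\widetilde{E_{p-1}}\,(a_4^3/\Delta)^{p}$. From $j=-2^{12}3^3a_4^3/\Delta$ and $2,3\in R^{\times}$ (recall $p\neq 2,3$) we get $\alpha=c'\,\widetilde{E_{p-1}}\,j^{p}$ for some $c'\in R^{\times}$. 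In $\widehat{A}=B_0=R[j,j^{-1},i^{-1},\widetilde{E_{p-1}}^{-1}]\h$ both $j$ and $\widetilde{E_{p-1}}$ are units, so $\alpha\in\widehat{A}^{\times}$.

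There is no genuine obstacle here: the lemma is pure bookkeeping built on the cited inputs (\cite[Thm.~1.3]{H} via \cite[pg.~588]{local} for~\ref{lolla}, and \cite{difmod} for~\ref{urssu}). The only points that deserve a little care are (i) checking that enlarging the localization by $\widetilde{E_{p-1}}^{-1}$ does not collapse anything, so that $M^{\bullet}_{\{a_4a_6\}}(0)$ genuinely sits inside $\cO(J^{\bullet}(X))$, and (ii) keeping the weights straight — each of $\widetilde{E_{p-1}}$, $j$ and $\alpha$ has weight $0$ — so that the displayed manipulations legitimately take place inside $M^0_{\{a_4a_6\}}(0)$ rather than in a ring of nonzero weight.
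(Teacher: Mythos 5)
Your proof is correct and follows the same route the paper takes: the lemma is an immediate consequence of Equation~\ref{lolla} once $g=E_{p-1}$ is substituted, since $\beta_0\in M^0_{\{a_4a_6\}}(0)\subset B_0$ and $\beta_1\in M^1_{\{a_4a_6\}}(0)\subset B_1$. Your explicit verification that $\alpha=c\,\widetilde{E_{p-1}}\,(a_4^3/\Delta)^p$ is a unit in $\widehat{A}$, and that the inclusions $M^r_{\{a_4a_6\}}(0)\hookrightarrow B_r$ are injective, just supplies details the paper leaves implicit.
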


In particular
$f^{\flat}\in \cO(J^1(X))$ is quasi-linear
of order $(0,1)$ and non-degenerate.

\medskip

Now by \cite[Construction 3.2 and Thm. 5.1]{Barcau},
there exist unique $\d$-modular forms
$f^{\partial} \in M^1_{\{E_{p-1}\}}(\phi-1)$ and
$f_{\partial} \in M^1_{\{E_{p-1}\}}(1-\phi)$, defined over $\Z_p$, with
$\d$-Fourier expansions identically equal to $1$.
Moreover, these forms are isogeny covariant
and $f^{\partial} \cdot f_{\partial}=1$.
Furthermore, the reduction $\overline{f^{\partial}} \in \overline{M^1}$
equals the image of $E_{p-1}\in M_{p-1}$ in $\overline{M^1}$.
In particular
\begin{equation}
\label{fpar}
f_{\partial}\in E^{-1}_{p-1}+pB_1.
\end{equation}
For $\lambda \in R^{\times}$, we defined in \cite[pg. 590, Eq. 4.54]{local} the form
\medskip
\begin{equation}
\label{flam}
    f_{\lambda}  :=  (f^1)^{\phi}-\lambda f^1
f_{\partial}(f_{\partial})^{\phi} \in M^2_{\{E_{p-1}\}}(-\phi-\phi^2).
\end{equation}
\medskip
Since $f_1$ and $f^\partial$ are isogeny covariant,
so is $f_{\lambda}$.
Furthermore consider the series
\[t^{\frac{\phi^2+\phi}{2}}:=t^{\frac{p^2+p}{2}} \left(
\frac{t^{\phi}}{t^p} \right)^{1/2} \left( \frac{t^{\phi^2}}{t^{p^2}}
\right)^{1/2} \in M^2_{\{a_4a_6\}}(\phi+\phi^2),\] and
 define
\begin{equation}
\label{flats} f^{\flat}_{\lambda}  :=  f_{\lambda} \cdot
t^{\frac{\phi^2+\phi}{2}} 
\in M^2_{\{a_4a_6E_{p-1}\}}(0)=\cO(J^2(X))=B_2.
\end{equation}
\medskip

 Note that one may see $f^{\flat}_{\lambda}\in \cO(J^2(X))$ as a morphism 
 $$f^{\flat}_{\lambda}:J^2(X)\ra \widehat{{\mathbb A}^1}.$$
 One can introduce the following:
 
 \begin{definition}
  The $\d$-{\it period map} is the $\d$-morphism
 \begin{equation}\label{permap}
 \wp:J^2(X)\ra \widehat{{\mathbb A}^2}:=\text{Spf}\ R[x_0,x_1]^{\widehat{\ }}\end{equation}
 with components
 $$(
 t^{\frac{\phi^2+\phi}{2}}
  f^1
f_{\partial}(f_{\partial})^{\phi}, t^{\frac{\phi^2+\phi}{2}}(f^1)^{\phi}).$$
The induced morphism of perfectoid spaces in ${\mathcal P}_{\Phi}$,
\begin{equation}
\label{perfpermap}
P^{\infty}(\wp):P^{\infty}(X)\ra P^{\infty}({\mathbb A}^2),
\end{equation}
will still be called the $\d$-{\it period map}.
 \end{definition}
 
 \begin{remark}
 The morphism $\wp$ induces a morphism
 which we call the {\it projectivized $\d$-period map},
 \begin{equation}\label{projper}
 \tilde{\wp}:J^2(X)\backslash \wp^{-1}(0)\stackrel{\wp}{\ra} \widehat{{\mathbb A}^2}\backslash \{0\}\stackrel{\text{can}}{\ra} \widehat{{\mathbb P}^1}:=(\text{Proj}\ R[x_0,x_1])^{\widehat{\ }}.
 \end{equation}
Clearly, if $L_{\lambda}\subset \widehat{{\mathbb A}^2}$ is the line defined by
$x_1-\lambda x_0$ then we have an equality of formal schemes
$$(f^{\flat}_{\lambda})^{-1}(0)=\wp^{-1}(L_{\lambda})=\tilde{\wp}^{-1}(1:\lambda).$$

However, we cannot apply our theory to attach a map of perfectoid spaces to the projectivized $\d$-period map \ref{projper} because  $\tilde{\wp}$ is not defined on all of $J^2(X)$. However, a variant of our theory can be developed to define such a map of perfectoid spaces, $P^{\infty}(\tilde{\wp})$ and once done, one is then tempted to expect that $P^{\infty}(\tilde{\wp})$ possesses no section in the category of perfectoid spaces (or at least in the category ${\mathcal P}_{\Phi}$). We will not pursue this issue here.\end{remark}
 
 \begin{remark}
 Our definition of the $\d$-period map \ref{permap} may seem somewhat ad hoc., the motivation for this comes from \cite[Prop. 8.75]{equations}. However, that this map ``generates", in a precise sense, the ``$\d$-ring of all $\d$-morphisms satisfying a certain {\it isogeny covariance} property". So the map \ref{permap}
 is actually a ``canonical" $\d$-map.
 \end{remark}

Recall from \cite[Lem. 4.55]{local} the following remarkable property of the forms $f_{\lambda}$:

\begin{lemma}
\label{9438hfb}
Let $E_1$ be an elliptic curve $y^2=x^3+A_1x+B_1$ over $R$
with ordinary reduction.  Then
\begin{enumerate}
\item
There exists $\lambda \in R^{\times}$ such that $f_{\lambda}(A_1,B_1)=0$.
\item
If $\lambda$ is as in (1) and there is an isogeny of
degree prime to $p$ between $E_1$ and an elliptic curve $E_2$
over $R$ given by $y^2=x^3+A_2x+B_2$, then $f_{\lambda}(A_2,B_2)=0$.
\end{enumerate}
\end{lemma}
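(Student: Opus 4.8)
The plan is to reduce everything to the transformation properties of $\d$-modular forms recalled above, applied to the explicit expression for $f_{\lambda}$ in \ref{flam}; this is essentially \cite[Lem.~4.55]{local}. First I would record that evaluation of a $\d$-modular form at a Weierstrass pair $(A,B)$ commutes with $\phi$: the substitution $a_4\mapsto A$, $a_6\mapsto B$, $a_4'\mapsto\d A$, $a_6'\mapsto\d B,\dots$ is the unique $\d$-equivariant extension of $a_4\mapsto A$, $a_6\mapsto B$, so $a_4^{\phi}=a_4^p+pa_4'\mapsto A^p+p\d A=\phi(A)$ and likewise $a_6^{\phi}\mapsto\phi(B)$, whence $f^{\phi}(A,B)=\phi\bigl(f(A,B)\bigr)$ for every $\d$-modular form $f$. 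Applying this to \ref{flam},
\[
f_{\lambda}(A_1,B_1)=\phi\bigl(f^1(A_1,B_1)\bigr)-\lambda\,f^1(A_1,B_1)\,f_{\partial}(A_1,B_1)\,\phi\bigl(f_{\partial}(A_1,B_1)\bigr).
\]
Since $E_1$ is ordinary, $E_{p-1}(A_1,B_1)\in R^{\times}$, so by \ref{fpar} the element $\mu:=f_{\partial}(A_1,B_1)$ lies in $E_{p-1}(A_1,B_1)^{-1}+pR$ and is a unit, whence $\mu\,\phi(\mu)\in R^{\times}$. Put $\nu:=f^1(A_1,B_1)\in R$. If $\nu=0$ then $f_{\lambda}(A_1,B_1)=0$ for all $\lambda$ and $\lambda=1$ works; if $\nu\neq 0$, set $\lambda:=\phi(\nu)\,\bigl(\nu\,\mu\,\phi(\mu)\bigr)^{-1}\in K^{\times}$. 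Because $\phi$ is an automorphism of $R$ it preserves $v_p$, so $v_p(\lambda)=v_p(\phi(\nu))-v_p(\nu)=0$, i.e.\ $\lambda\in R^{\times}$; and $f_{\lambda}(A_1,B_1)=0$ by construction. This gives part (1).

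For part (2) the key point is that for this fixed $\lambda$ the form $f_{\lambda}$ is itself isogeny covariant. Both summands $(f^1)^{\phi}$ and $f^1 f_{\partial}(f_{\partial})^{\phi}$ of $f_{\lambda}$ have weight $-\phi-\phi^2$ (a quick computation from the weights $-1-\phi$ of $f^1$ and $1-\phi$ of $f_{\partial}$), and each is isogeny covariant, since $f^1,f_{\partial}$ are and conjugation by $\phi$ as well as products preserve isogeny covariance; as $\deg(-\phi-\phi^2)=-2$, both summands --- hence their difference with the fixed scalar $\lambda$ --- transform under an isogeny $u$ of degree prime to $p$ carrying $dx/y$ to $dx/y$ by the single factor $\deg(u)^{1}$. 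Now suppose there is an isogeny of degree prime to $p$ between $E_1$ and $E_2$; replacing it by its dual if needed, take $u\colon E_1\to E_2$. Then $E_2$ is ordinary ($p$-rank is an isogeny invariant), so $E_{p-1}(A_2,B_2)\in R^{\times}$ and $f_{\lambda}(A_2,B_2)$ is defined; since $\deg(u)$ is prime to $p$, $u$ is \'etale over $R$, so $u^{*}(dx/y)=c\,(dx/y)$ with $c\in R^{\times}$, and after rescaling the Weierstrass model of $E_2$ by $c$ --- which does not affect the vanishing of $f_{\lambda}$, that form being weight-homogeneous with a unit scaling factor --- we may assume $u$ carries $dx/y$ to $dx/y$. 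Isogeny covariance then gives $f_{\lambda}(A_1,B_1)=\deg(u)\,f_{\lambda}(A_2,B_2)$ with $\deg(u)\in R^{\times}$, and part (1) forces $f_{\lambda}(A_2,B_2)=0$.

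The only delicate point is the weight bookkeeping making both summands of $f_{\lambda}$ homogeneous of the \emph{same} weight $-\phi-\phi^2$, so that a single scalar $\lambda$ keeps $f_{\lambda}$ isogeny covariant; this, together with the reduction to a normalized isogeny, is routine once the definitions above are unwound, and the remaining steps are then formal.
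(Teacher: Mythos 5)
Your proof is correct. Note that the paper does not prove this lemma itself but recalls it from \cite[Lem. 4.55]{local}; your argument is the intended one, and indeed your choice $\lambda=\phi(\nu)\bigl(\nu\,\mu\,\phi(\mu)\bigr)^{-1}$ coincides with the formula $\lambda(E_1)=\frac{(f^1)^{\phi}f^{\partial}(f^{\partial})^{\phi}}{f^1}(A_1,B_1)$ that the paper records immediately after the lemma (using $f^{\partial}=f_{\partial}^{-1}$), and the weight/isogeny-covariance bookkeeping for part (2) is exactly the mechanism the cited source relies on.
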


If $E_1$ in the Lemma above is superordinary then $\lambda$ in assertion 1 is, of course, given by
$$\lambda:=\lambda(E_1):=\frac{(f^1)^{\phi}f^{\partial}(f^{\partial})^{\phi}}{f^1}(A_1,B_1).$$
If in addition $E_2$ is as in the Lemma then $E_2$ is also superordinary and 
$$\lambda(E_1)=\lambda(E_2).$$

Motivated by the above Lemma we make the following definition. 

\begin{definition}Let $X={\mathbb A}^1_{a_4a_6E_{p-1}}$
and  $\lambda\in R^{\times}$.
The  
{\it $\d$-isogeny class} attached to $\lambda$ is the arithmetic differential equation
 $Y_2(\lambda)\subset J^2(X)$  defined by the ideal generated by $f^{\flat}_{\lambda}$; equivalently $Y_2(\lambda):=\wp^{-1}(L_{\lambda})=\tilde{\wp}^{-1}(1:\lambda)$, where $\wp$ is the $\d$-period map, $L_{\lambda}\subset \widehat{{\mathbb A}^2}$ is the line $x_1-\lambda x_0=0$, and $\tilde{\wp}$ is the projectivized $\d$-period map.
 If $E$ is a superordinary elliptic curve curve and $Q\in X(R)$ is the point corresponding to it
 then we set $Y_2(Q):=Y_2(\lambda(E))$. \end{definition}
 
  Note that if $Q'\in X(R)$ is another point corresponding to an elliptic curve $E'$ that has possesses an isogeny to $E$ of degree prime to $p$ then $Y_2(Q)=Y_2(Q')$.

\subsection{Quasi-linearity of $Y_2(\lambda)$ and a perfectoid consequence}\label{sec:QL} We are now in a position to apply our theory to $\d$-isogeny classes. We also prove Corollary~\ref{ppp} which provided the contradiction in proof of Theorem~\ref{sandoval}.

\begin{theorem}\label{oop}
Let $X={\mathbb A}^1_{a_4a_6E_{p-1}}$
and  $\lambda\in R^{\times}$. The $\d$-isogeny class  $Y_2(\lambda)\subset J^2(X)$ attached to $\lambda$  is quasi-linear of order $(1,1)$ and non-degenerate. In particular for any 
 superordinary $Q\in X(R)$ the  $\d$-isogeny class  $Y_2(Q)\subset J^2(X)$  is quasi-linear of order $(1,1)$ and non-degenerate. 
\end{theorem}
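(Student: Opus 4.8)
Since $X={\mathbb A}^1_{a_4a_6E_{p-1}}$ is affine with global \'etale coordinate $T=a_6^2/a_4^3$ (by \ref{E:definition of b} and \ref{mazel}), and $Y_2(\lambda)\subset J^2(X)$ is by definition the closed formal subscheme whose ideal in $\cO(J^2(X))=B_2$ is generated by $f^{\flat}_{\lambda}$, the plan is to prove that $f^{\flat}_{\lambda}$ is quasi-linear of order $(1,1)$ and non-degenerate in the sense of Definition \ref{rs}, i.e. that
$$f^{\flat}_{\lambda}\equiv a_0\,T'+a_1\,(T')^{\phi}\pmod{F^0B_2},\qquad a_0,a_1\in\widehat{A}^{\times}.$$
Granting this, the final assertion of the theorem is immediate, since for superordinary $Q\in X(R)$ corresponding to $E$ one has $Y_2(Q)=Y_2(\lambda(E))$ with $\lambda(E)\in R^{\times}$.

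The first step is to rewrite $f^{\flat}_{\lambda}$ in terms of the order one arithmetic Kodaira--Spencer form $f^{\flat}=f^1\cdot t^{\frac{\phi+1}{2}}$ of \ref{varfi}, whose leading behaviour is pinned down by Lemma \ref{fflat}. Applying $\phi$ to the defining series \ref{fffut} and using that $\phi$ is a ring endomorphism commuting with $\d$, one checks the identity $t^{\frac{\phi^2+\phi}{2}}=\bigl(t^{\frac{\phi+1}{2}}\bigr)^{\phi}$. Introducing the weight-zero normalization $h:=f_{\partial}\,t^{\frac{\phi-1}{2}}\in M^1_{\{a_4a_6E_{p-1}\}}(0)=B_1$ (with $t^{\frac{\phi-1}{2}}$ defined analogously to \ref{fffut}, legitimate since $p-1$ is even), the same bookkeeping of $t$-powers gives $(f_{\partial})^{\phi}\,t^{\frac{\phi^2-\phi}{2}}=\phi(h)$, whence
$$f^1\,f_{\partial}\,(f_{\partial})^{\phi}\bigl(t^{\frac{\phi+1}{2}}\bigr)^{\phi}=\bigl(f^1\,t^{\frac{\phi+1}{2}}\bigr)\,h\,\phi(h)=f^{\flat}\,h\,\phi(h),$$
and therefore, by \ref{flam} and \ref{flats},
$$f^{\flat}_{\lambda}=\phi(f^{\flat})-\lambda\,f^{\flat}\,h\,\phi(h)\in B_2.$$

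Next I would extract the leading terms of the two summands modulo $F^0B_2$. By Lemma \ref{fflat}, $f^{\flat}=\alpha T'+b$ with $\alpha\in\widehat{A}^{\times}$ and $b\in B_0+pB_1=F^0B_1$. Since $\phi(T')=(T')^{\phi}=(T')^p+pT''$ and $\phi(\alpha)=\alpha^p+p\,\d\alpha$, one finds $\phi(\alpha T')\in\alpha^p(T')^{\phi}+pB_1+p^2B_2$, while $\phi(b)\in\phi(F^0B_1)\subset F^0B_2$ by \ref{inclusions}; hence $\phi(f^{\flat})\equiv\alpha^p(T')^{\phi}\pmod{F^0B_2}$. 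For the other summand, $\overline{f_{\partial}}$ equals the image of $E_{p-1}^{-1}$ (from $f^{\partial}f_{\partial}=1$ together with \ref{fpar}), so $\overline{h}\in\overline{A}^{\times}$ carries no jet variable; writing $h=\sum_k h_k(T')^k$ with $h_k\in\widehat A$, this forces $h_k\in p\widehat A$ for $k\geq 1$, i.e. $h=h_0+ph'$ with $h_0\in\widehat A^{\times}$ and $h'\in B_1$. Then $h\,\phi(h)=h^{p+1}+p\,h\,\d h$ and $h^{p+1}=h_0^{p+1}+p\eta$ with $\eta\in B_1$, and expanding $f^{\flat}\,h\,\phi(h)=(\alpha T'+b)(h^{p+1}+p\,h\,\d h)$ every contribution except $\alpha h_0^{p+1}T'$ falls into $F^0B_2$ --- the terms carrying an explicit factor $p$ land in $pB_1+pB_2$, and $b\,h_0^{p+1}$ lies in $F^0B_1$ precisely because $h_0\in\widehat A$. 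Thus $f^{\flat}\,h\,\phi(h)\equiv\alpha h_0^{p+1}T'\pmod{F^0B_2}$.

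Combining the two computations gives
$$f^{\flat}_{\lambda}\equiv\alpha^p\,(T')^{\phi}-\lambda\,\alpha h_0^{p+1}\,T'\pmod{F^0B_2},$$
with $a_1:=\alpha^p\in\widehat A^{\times}$ and $a_0:=-\lambda\,\alpha h_0^{p+1}\in\widehat A^{\times}$ (using $\lambda\in R^{\times}$), which is exactly the required quasi-linearity of order $(1,1)$ and non-degeneracy. I expect the delicate point to be precisely this last bookkeeping with the filtration $F^{\bullet}B_n$, and in particular the verification that the ``leading coefficients'' $\alpha$ and $h_0$ genuinely lie in the coefficient ring $\widehat A$ (so that their powers and products remain units in $\widehat A$, not merely in $B_1$) --- this is what Lemma \ref{fflat} and the mod-$p$ description \ref{fpar} of $f_{\partial}$ are there to supply.
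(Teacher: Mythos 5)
Your proof is correct and follows essentially the same route as the paper's: the paper likewise writes $f^{\flat}_{\lambda}=(f^{\flat})^{\phi}-\lambda\, f^{\flat} f_{\partial}(f_{\partial})^{\phi}t^{\frac{\phi^2-1}{2}}$, invokes Lemma \ref{fflat} and \ref{fpar}, and multiplies out the congruence classes modulo $F^0B_2=B_0+pB_1+p^2B_2$; your $h$ merely packages $f_{\partial}t^{\frac{\phi-1}{2}}$ into a single weight-zero element of $B_1$. One line of your bookkeeping should be tightened: the assertion that ``the terms carrying an explicit factor $p$ land in $pB_1+pB_2$'' does not by itself suffice, because $pB_2\not\subset F^0B_2$ (e.g.\ $pT''$ is not in $B_0+pB_1+p^2B_2$). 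For the term $p\,h\,\d h$ you need the finer fact that $\d h\in B_1+pB_2$ --- which is \emph{not} true for a general element of $B_1$ (e.g.\ $\d T'=T''$), but does follow from the decomposition $h=h_0+ph'$ with $h_0\in\widehat A$ that you already established --- whence $p\,h\,\d h\in pB_1+p^2B_2\subset F^0B_2$. With that one-line repair the computation closes and yields the same leading coefficient as the paper, namely $f^{\flat}_{\lambda}\in\alpha^p(T')^{\phi}-\lambda\,\alpha\, E_{p-1}^{-p-1}t^{\frac{p^2-1}{2}}\,T'+F^0B_2$.
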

\begin{proof}
By Lemma \ref{fflat} we have
$$f^{\flat}\in \alpha T' +B_0+pB_1,\ \ \ \ \ \ \ (f^{\flat})^{\phi} \in \alpha^p (T')^{\phi} +B_0+pB_1+p^2B_2.$$
On the other hand we have
$$f^{\flat}_{\lambda}=
(f^{\flat})^{\phi}-\lambda f^{\flat} f_{\partial}(f_{\partial})^{\phi}t^{\frac{\phi^2-1}{2}}$$
where
$$t^{\frac{\phi^2-1}{2}}:=t^{\frac{p^2-1}{2}}\cdot (t^{\phi^2}/t^{p^2})^{1/2}\in 
t^{\frac{p^2-1}{2}}+pB_1.
$$
Moreover, by \ref{fpar}, we have
$$f_{\partial}\in E_{p-1}^{-1}+pB_1,\ \ \ (f_{\partial})^{\phi}\in E_{p-1}^{-p}+pB_1+p^2B_2.$$
Combining the above formulas we get
$$
\begin{array}{rcl}
f^{\flat}_{\lambda} & \in & (\alpha^p (T')^{\phi} +B_0+pB_1+p^2B_2) \\
\ & \ & \ \\
\ & \ & -\lambda(\alpha T' +B_0+pB_1)(E_{p-1}^{-1}+pB_1)(E_{p-1}^{-p}+pB_1+p^2B_2)
(t^{\frac{p^2-1}{2}}+pB_1)\\
\ & \ & \ \\
\ & \subset & \alpha^p (T')^{\phi}-\lambda \alpha E_{p-1}^{-p-1}t^{\frac{p^2-1}{2}}\cdot T'
+B_0+pB_1+p^2B_2.
\end{array}
$$
This ends the proof because $\alpha\in \widehat{A}^{\times}$ and $E_{p-1}^{-p-1}t^{\frac{p^2-1}{2}}\in \widehat{A}^{\times}$.
\end{proof}

We finally address the a for mentioned corollary. As usual we denote by $(Y_n(\lambda))_{n\geq 0}$ the prolongation of $Y_2(\lambda)$. Set also
$C_n(\lambda)=\cO(Y_n(\lambda))$, and 
$$C_{\infty}(\lambda):=\varinjlim C_n(\lambda).$$
By Theorem \ref{oop} and Corollary \ref{argue} we get.

\begin{corollary}\label{ppp}
The $K$-algebra $P:=P(\widehat{C_{\infty}(\lambda)},\phi)$ is perfectoid and its relative Frobenius lift $\Phi$ is invertible. Moreover the ring
$P^\circ/{\mathbb K}^{\circ \circ}P^\circ$
 is the perfection of an integral ind-\'{e}tale algebra 
  over $\overline{A}=\cO(\overline{X})$.

  In addition, for $P^{\infty}(Y_2(Q)):=\Spa(P,P^\circ)$, there is a natural closed immersion $P^{\infty}(Y_2(Q))\ra P^{\infty}(X)$ compatible with the relative Frobenius lifts. 
  \end{corollary}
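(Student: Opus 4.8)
The plan is to deduce Corollary~\ref{ppp} directly from the infrastructure already in place: Theorem~\ref{oop} establishes that $Y_2(\lambda)\subset J^2(X)$ is a non-degenerate quasi-linear arithmetic differential equation of order $(1,1)$, and $X={\mathbb A}^1_{a_4a_6E_{p-1}}$ is a smooth affine curve over $R$. First I would invoke Theorem~\ref{popi} applied to the trivial affine cover $X=X$: this yields that $C_\infty(\lambda)=H^0(X,\cO_{Y_\infty})$ is $p$-torsion free, hence so is $\widehat{C_\infty(\lambda)}$, and that $\overline{C_\infty(\lambda)}$ is integral and ind-\'etale over $\cO(\overline X)=\overline A$; in particular it is reduced. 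Thus the pair $(\widehat{C_\infty(\lambda)},\phi)$ satisfies the hypotheses of Theorem~\ref{r1}: it is a $p$-torsion free, $p$-adically complete $R$-algebra with reduced reduction mod $p$, equipped with the Frobenius lift $\phi$ coming from the universal $p$-derivation on $C_\infty(\lambda)$.

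Next I would apply Theorem~\ref{r1} verbatim to $A=\widehat{C_\infty(\lambda)}$. Part (1) gives that $P:=P(\widehat{C_\infty(\lambda)},\phi)$ is a perfectoid ${\mathbb K}$-algebra; part (3) gives that the induced relative Frobenius lift $\Phi$ on $P$ is invertible; and part (4), combined with the identification $\overline{C_\infty(\lambda)}=\varinjlim\overline{D}_n$ being integral and ind-\'etale over $\overline A$ from Theorem~\ref{popi}, gives the isomorphism
$$P^\circ/{\mathbb K}^{\circ\circ}P^\circ\simeq (\,\overline{C_\infty(\lambda)}\,)_{\text{perf}},$$
so that $P^\circ/{\mathbb K}^{\circ\circ}P^\circ$ is the perfection of an integral ind-\'etale algebra over $\overline A=\cO(\overline X)$. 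This settles the first two sentences of the corollary.

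For the final sentence I would use Remark~\ref{d1} and the functoriality in Theorem~\ref{r1}(6): the closed immersion $Y_2(\lambda)\hookrightarrow J^2(X)$, after prolongation, induces a surjection $J^\infty(B)\twoheadrightarrow C_\infty(\lambda)$ where $B=\cO(X)=A$ (here I conflate $A=\cO(X)$ with the notation $B$ of Section~2), commuting with the Frobenius lifts; applying $P$ and using that $P$ sends surjections to surjections (Theorem~\ref{r1}(6)) yields a surjection $P^\circ(J^\infty(A),\phi)\twoheadrightarrow P^\circ$ of perfectoid ${\mathbb K}^\circ$-algebras compatible with $\Phi$, hence a closed immersion of affinoid perfectoid spaces $P^{\infty}(Y_2(Q))=\Spa(P,P^\circ)\ra P^\infty(X)=\Spa(P(J^\infty(A),\phi),P^\circ(J^\infty(A),\phi))$ compatible with the relative Frobenius lifts, where I have used the affine description $P^\infty(X)\simeq\Spa(P(J^\infty(A),\phi),P^\circ(J^\infty(A),\phi))$ from the discussion following Theorem~\ref{ping}. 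I would then remark that this is exactly the statement obtained by specializing Corollary~\ref{argue} to the principal cover consisting of $X$ alone, so the corollary is a direct instance of the general machinery; the only point requiring care is checking that the prolongation maps really do give a $\phi$-equivariant surjection onto $C_\infty(\lambda)$, which follows from the construction of the prolongation ideal as generated by $f^\flat_\lambda,\delta f^\flat_\lambda,\delta^2 f^\flat_\lambda,\dots$ together with the universality property of $J^\infty$. The main (very minor) obstacle is bookkeeping the identification of $P^\infty(X)$ with the affinoid perfectoid space attached to $A$ and confirming the compatibility of all the Frobenius lifts through the chain of functors; there is no substantive difficulty, as every ingredient has already been isolated in Theorems~\ref{r1}, \ref{popi}, \ref{oop} and Corollary~\ref{argue}.
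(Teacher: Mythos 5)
Your proposal is correct and follows essentially the same route as the paper, whose entire proof is the one-line deduction from Theorem~\ref{oop} (quasi-linearity and non-degeneracy of $Y_2(\lambda)$) combined with Corollary~\ref{argue}; you have simply unpacked Corollary~\ref{argue} into its ingredients (Theorem~\ref{popi}, Theorem~\ref{r1}, Remark~\ref{d1}), which is a faithful expansion rather than a different argument.
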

   \bigskip

The scheme $P^{\infty}(Y_2(Q))_k$ is, of course, the reduction mod ${\mathbb K}^{\circ \circ}$ of $P^{\infty}(Y_2(Q))$ with respect to the canonical principal cover of the affine scheme $X$; it is the perfection of a profinite pro-\'{e}tale cover of $\overline{X}$.
  Theorem \ref{boi} follows now from Corollary \ref{ppp} and from our construction. In particular, one 
simply needs to take $P^Q(X)$ in Theorem \ref{hoi} to be equal to $P^{\infty}(Y_2(Q))$ in Corollary \ref{ppp}.

\end{document}